\newtheorem{defn}{Definition}[section]
\DeclarePairedDelimiter{\ceil}{\lceil}{\rceil}
\DeclarePairedDelimiter{\floor}{\lfloor}{\rfloor}
\newtheorem{corollary}[defn]{Corollary}
\newtheorem{lemma}[defn]{Lemma}
\newtheorem{thm}[defn]{Theorem}
\newtheorem{theorem}[defn]{Theorem}
\newtheorem{proposition}[defn]{Proposition}
\theoremstyle{definition}
\newcommand{\lmfdbec}[3]{\href{http://www.lmfdb.org/EllipticCurve/Q/#1#2#3}{{\text{\rm#1#2#3}}}}
\newcommand{\Q}{\mathbb Q}
\newcommand{\Z}{\mathbb Z}
\newcommand{\Gal}{\operatorname{Gal}}
\newcommand{\Aut}{\operatorname{Aut}}
\newcommand{\GL}{\operatorname{GL}}
\def\arraystretch{1.5}
\begin{document}



\bibliographystyle{plain}

\title[Torsion Maximal Ableian Extension]{Torsion of Rational Elliptic Curves over the Maximal Abelian Extension of $\Q$}

\author{Michael Chou}

\address{Dept. of Mathematics, Tufts University, Medford, MA, 02155, USA}
\email{michael.chou@tufts.edu} 




\begin{abstract} Let $E$ be an elliptic curve defined over $\Q$, and let $\Q^{ab}$ be the maximal abelian extension of $\Q$. In this article we classify the groups that can arise as $E(\Q^{ab})_{\text{tors}}$ up to isomorphism. The method illustrates techniques for finding explicit models of modular curves of mixed level structure. Moreover we provide an explicit algorithm to compute $E(\Q^{ab})_{\text{tors}}$ for any elliptic curve $E/\Q$.
\end{abstract}

\maketitle


\section{Introduction and Notation}

Let $K$ denote a number field, and let $E$ be an elliptic curve over $K$.  The Mordell-Weil theorem states that the group of $K$-rational points on $E$ form a finitely generated abelian group.  In particular, letting $E(K)$ denote the $K$-rational points on $E$, we have that
$$E(K) \cong \Z^{r_{K}} \oplus E(K)_{\text{tors}}$$
for some finite group $E(K)_{\text{tors}}$, called the torsion of $E$ over $K$.  In fact, due to a theorem of Merel, there is a bound on size of the torsion subgroup that depends only on the degree of $K$ over $\Q$.  Thus there is a finite list of torsion subgroups that appear as $E(K)_{\text{tors}}$ as $K$ varies over number fields of a fixed degree $d$ and $E/K$ varies.  Let $\Phi(d)$ denote the set of torsion subgroups (up to isomorphism) that appear as $E(K)_{\text{tors}}$ for some elliptic curve $E/K$ as $K$ ranges over all number fields of a fixed degree $d$ over $\Q$.  In particular, in \cite{mazur1} Mazur determined $\Phi(1)$.  Not many other values of $\Phi(d)$ have been determined. The set $\Phi(2)$ was classified by Kamienny (\cite{kamienny} 1992), Kenku, Momose (\cite{kenkumomose} 1988), and the set $\Phi(3)$ was classified by Derickx and independently by Etropolski, Morrow, Zureick-Brown (\cite{cubicclass} 2016).

Classifying torsion subgroups of elliptic curves over number fields is equivalent to classifying points on the modular curves $X_1(M,N)$ defined over these number fields. Thus, the classification of $\Phi(d)$ involves determining all such modular curves with $K$-rational points for any number field $K$ of degree $d$ over $\Q$.

One may also ask a more refined question. Let $\Phi_{\Q}(d)$ denote the set of torsion subgroups (up to isomorphism) that appear as $E(K)_{\text{tors}}$ for some elliptic curve $E/\Q$ as $K$ ranges over all number fields of degree $d$ over $\Q$.  Notice that necessarily $\Phi_{\Q}(d) \subseteq \Phi(d)$ since we are restricting the set of elliptic curves we are considering.  Of course, $\Phi_{\Q}(1) = \Phi(1)$. The sets $\Phi_{\Q}(2)$ and $\Phi_{\Q}(3)$ were determined by Najman (\cite{najman} 2015). A subset of $\Phi_{\Q}(4)$, namely $E(K)_{\text{tors}}$ for $[K:\Q]=4$ and $K/\Q$ abelian, was classified by the author (\cite{chou} 2016), and $\Phi_{\Q}(4)$ has been determined by Gonz\'{a}lez-Jim\'{e}nez and Najman (\cite{gonznajman}). For a more in depth summary of what is known about torsion of elliptic curves over number fields of a fixed degree $d$ see for instance the introduction of \cite{chou}.

In the setting of modular curves, these torsion subgroups can be viewed as $K$-rational points for some $[K:\Q]=d$ on $X_1(M,N)$ whose image under the $j$-map is in $\Q$. These elliptic curves obtain the torsion structure $\Z/M\Z \oplus \Z/N\Z$ over a degree $d$ number field as they correspond to a $K$-rational point on $X_1(M,N)$, but their $j$-invariants are in $\Q$, as each elliptic curve can be defined over $\Q$. 

One can also consider torsion over an infinite extension $L$ of $\Q$. For a fixed algebraic extension $L$ of $\Q$ let $\Phi_{\Q}(L)$ denote the set of torsion subgroups $E(L)_{\text{tors}}$ up to isomorphism  that appear as $E/\Q$ varies. The Mordell-Weil theorem no longer applies, and so a priori it is not guaranteed that the size of $E(L)_{\text{tors}}$ is finite, let alone uniformly bounded as $E$ varies.  Even so, in certain infinite extensions the number of torsion points is finite and, in fact, uniformly bounded as $E$ varies. Fujita determined $\Phi_{\Q}(\Q(2^{\infty}))$ where $\Q(2^{\infty})$ is the compositum of all degree 2 extensions of $\Q$, i.e. $\Q(2^{\infty}) := \mathbb{Q}(\{\sqrt{m} : m \in \mathbb{Z} \})$.

\begin{theorem}[Fujita \cite{fujita}, Theorem 2]\label{fujita}
\[
\Phi_{\Q}(\Q(2^{\infty})) = \left\{
\begin{array}{lr}
\Z/N_1\Z, & N_1 = 1, 3, 5, 7, 9, 15, \\
\Z/2\Z \times \Z/2N_2\Z, & N_2 = 1, 2, 3, 4, 5, 6, 8, \\
\Z/4\Z \times \Z/4N_4\Z, & N_4 = 1,2,3,4, \\
\Z/3\Z \times \Z/3\Z, & \\
\Z/6\Z \times \Z/6\Z, & \\
\Z/8\Z \times \Z/8\Z.
\end{array} \right\}
\]
\end{theorem}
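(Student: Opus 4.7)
The plan is to determine $E(\Q(2^\infty))_{\mathrm{tors}}$ one rational prime at a time and then combine. Set $G_\Q := \Gal(\overline{\Q}/\Q)$ and note that $\Q(2^\infty)$ is the fixed field of $H := G_\Q^2 \cdot [G_\Q, G_\Q]$, the closed subgroup generated by squares and commutators. Hence for every $N$,
\[
E[N] \cap E(\Q(2^\infty)) = E[N]^{\,\rho_{E,N}(H)},
\]
where $\rho_{E,N}\colon G_\Q \to \GL_2(\Z/N\Z)$ is the mod-$N$ representation. The problem then reduces to determining, for each prime $p$ and each admissible mod-$p^k$ image arising from an elliptic curve over $\Q$, the fixed submodule of the commutator-and-squares subgroup of that image.

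The first step is to cut the relevant primes down to $\{2,3,5,7\}$. If $P \in E[p]$ has odd prime order $p$ and lies in $\Q(2^\infty)$, then the image $I := \rho_{E,p}(G_\Q)$ has the property that its commutator-and-squares subgroup fixes $P$. A short group-theoretic argument (using that $\det\colon I \to \F_p^\times$ is surjective onto a cyclic group of order $p-1$, so $I$ cannot be abelian of exponent $2$ for $p \geq 5$) shows that $\langle P \rangle$ is forced to be $G_\Q$-stable, so $E$ admits a rational $p$-isogeny. Mazur's isogeny theorem then restricts $p$ to the primes in $\{3,5,7,11,13,17,19,37,43,67,163\}$. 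Moreover, the isogeny character $\chi\colon G_\Q \to \F_p^\times$ must factor through $G_\Q/H$, hence take values in $\{\pm 1\}$. For each $p \geq 11$ only finitely many $j$-invariants admit a rational $p$-isogeny, and one checks by direct computation on this finite list that no such $\chi$ is quadratic, ruling out these primes.

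For $p \in \{3,5,7\}$ the $p$-primary bound follows from a systematic analysis of the modular curves $X_0(p^k)$, $X_1(p^k)$, and the mixed-level curves $X_1(p, p^k)$, combined with Najman's determinations of $\Phi_\Q(2)$ and $\Phi_\Q(3)$, which pin down the torsion structures available over small-degree subfields of $\Q(2^\infty)$. The truly delicate case is $p=2$. Here one walks through the classification of the possible images of $\rho_{E,2^\infty}$ for $E/\Q$ and, for each, computes the submodule of $E[2^k]$ fixed by the commutator-and-squares subgroup of the image. The upper bound $\Z/8\Z \oplus \Z/8\Z$ arises because $[\Q(E[8]):\Q]$ divides $|\GL_2(\Z/8\Z)|$ and the exponent-$2$ quotient of the image cannot fix anything strictly larger; ruling out the borderline cases such as $\Z/2\Z \oplus \Z/2^a\Z$ for $a \geq 4$ and $\Z/4\Z \oplus \Z/4^b\Z$ for $b \geq 3$ is, in my view, the main obstacle, and is handled by showing the relevant modular curve $X_H$ has no non-cuspidal, non-CM $\Q$-rational point whose $j$-invariant is rational.

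Finally, since $E[p^\infty]$ and $E[q^\infty]$ are independent Galois modules for distinct primes $p \neq q$, the $p$-part bounds combine without interaction, yielding the list of a priori possibilities in the theorem. Each listed group is then realized by an explicit $E/\Q$, drawn either from the tables of $\Phi_\Q(2)$ and $\Phi_\Q(4)$ or from their quadratic twists, since twisting by $\sqrt{d}$ moves one within $\Q(2^\infty)$ without changing the subgroup of torsion defined over this field.
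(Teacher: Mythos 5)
First, a point of orientation: the paper does not prove this statement. Theorem \ref{fujita} is imported verbatim from Fujita's article and used as a known input, so there is no internal proof to compare yours against; what follows is an assessment of your sketch on its own terms. Your frame is sound and is in the spirit of how such results are proved: $\Q(2^{\infty})$ is the fixed field of the closed subgroup generated by squares and commutators, torsion over it is the submodule of $E[N]$ fixed by the image of that subgroup under $\rho_{E,N}$, and your argument that a point of prime order $p\geq 5$ forces a rational $p$-isogeny with quadratic isogeny character is correct (if $N:=\overline{I^2[I,I]}$ fixed all of $E[p]$, then $I$ would be an elementary abelian $2$-group surjecting onto $\F_p^{\times}$ via the determinant). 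But two steps have genuine problems. The smaller one: your claim that for each $p\geq 11$ only finitely many $j$-invariants admit a rational $p$-isogeny fails at $p=13$, since $X_0(13)$ has genus $0$ with infinitely many non-cuspidal rational points; there is no finite list to check, and you need instead an argument such as: a quadratic or trivial isogeny character would produce a point of order $13$ over a field of degree at most $2$, contradicting Kamienny's determination of $\Phi(2)$.

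The serious gap is the final step, ``the $p$-part bounds combine without interaction.'' This is where most of the theorem actually lives, and the assertion is false as a method: the groups $\rho_{E,p^{\infty}}(G_{\Q})$ and $\rho_{E,q^{\infty}}(G_{\Q})$ are quotients of the same group $G_{\Q}$ and are constrained jointly by the Weil pairing, by entanglement of division fields, and by Kenku's bound on the total number of isogenies in a $\Q$-isogeny class, so the admissible global structures are far from the direct product of the admissible local ones. Concretely, $\Z/7\Z$ and $\Z/2\Z\times\Z/2\Z$ each occur in Fujita's list but $\Z/2\Z\times\Z/14\Z$ does not; $\Z/15\Z$ occurs but $\Z/2\Z\times\Z/30\Z$ does not; $\Z/4\Z\times\Z/4\Z$ and $\Z/5\Z$ occur but $\Z/4\Z\times\Z/20\Z$ does not. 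Excluding these cross-prime combinations requires exactly the kind of case-by-case analysis of isogeny graphs and of mixed-level modular curves that the present paper carries out in its $\Q^{ab}$ setting (Propositions \ref{prop:3isogeny}--\ref{prop:13isogeny} and Section \ref{sec:class}), and your sketch supplies no mechanism for it. A smaller slip on the realization side: $\Z/8\Z\times\Z/8\Z$ cannot be drawn from the $\Phi_{\Q}(2)$ or $\Phi_{\Q}(4)$ tables, since by Theorem \ref{fulltorsionabelian} its minimal field of definition has degree at least $16$ over $\Q$; the witnesses must be exhibited directly over $\Q(2^{\infty})$.
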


Torsion over a similar infinite extension, $\Q(3^{\infty})$, the compositum of all cubic number fields, was studied by Daniels, Lozano-Robledo, Najman, and Sutherland in \cite{3infinity}.  They classify $\Phi_{\Q}(\Q(3^{\infty}))$. Moreover they determine which of these torsion structures appear infinitely often and which appear for only finitely many isomorphism classes of elliptic curves.

Here is some notation that will be used throughout the paper: $E[p^{\infty}]$ denotes torsion points of order a power of $p$ and $\Q^{ab}$ denotes the maximal abelian extension of $\Q$. By the Kronecker-Weber theorem we have that $\Q^{ab} = \Q(\{ \zeta_n : n \in \Z^{+} \})$ where $\zeta_n$ denotes a primitive $n$-th root of unity.

Given an abelian variety $A/\Q$, the torsion subgroup of $A(\Q^{ab})$ is finite (this is due to a theorem of Ribet \cite{ribet}).  Thus, one can ask if there is a uniform bound for the size of such a torsion subgroup or whether there are possibly infinitely many torsion structures that appear. If we restrict to genus one abelian varietes, we prove there are only finitely many groups that appear as $E(\Q^{ab})_{\text{tors}}$ for any elliptic curve $E/\Q$. In fact, we completely determine $\Phi_{\Q}(\Q^{ab})$.

\begin{theorem}\label{classification}
Let $E/\mathbb{Q}$ be an elliptic curve.  Then $E(\Q^{ab})_{\textup{tors}}$ is isomorphic to one of the following groups:
$$
\begin{array}{lr}
\Z/N_1\Z, & N_1 = 1, 3, 5, 7, 9, 11, 13, 15, 17, 19, 21, 25, 27, 37, 43, 67, 163, \\
\Z/2\Z \times \Z/2N_2\Z, & N_2 = 1, 2, \ldots, 9,\\
\Z/3\Z \times \Z/3N_3\Z, & N_3 = 1, 3,\\
\Z/4\Z \times \Z/4N_4\Z, & N_4 =  1, 2, 3, 4,\\
\Z/5\Z \times \Z/5\Z, & \\
\Z/6\Z \times \Z/6\Z, &  \\
\Z/8\Z \times \Z/8\Z. & \\
\end{array}
$$
Each of these groups appear as $E(\Q^{ab})_{\textup{tors}}$ for some elliptic curve over $\Q$.
\end{theorem}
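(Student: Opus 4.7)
The plan is to determine $E(\Q^{ab})_{\textup{tors}}$ by a prime-by-prime analysis. The fundamental observation is that a torsion point $P \in E[p^n]$ lies in $E(\Q^{ab})$ if and only if $P$ is fixed by the commutator subgroup $[H_{p^n}, H_{p^n}]$ of the mod-$p^n$ Galois image $H_{p^n} := \rho_{E, p^n}(G_\Q) \subseteq \GL_2(\Z/p^n\Z)$. Since $E(\Q^{ab})_{\textup{tors}} = \bigoplus_p E(\Q^{ab})[p^\infty]$, it suffices to determine the possible $p$-primary components for each prime $p$ and then combine via the Chinese Remainder Theorem.

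The first step is to bound the set of primes $p$ for which $E(\Q^{ab})[p^\infty]$ can be nonzero. If $E(\Q^{ab})[p] \neq 0$ then $E[p]$ contains a nonzero $[H_p, H_p]$-fixed subspace, which is automatically $H_p$-stable; hence either $H_p$ is reducible (so $E/\Q$ admits a $\Q$-rational subgroup of order $p$) or $H_p$ is abelian (and thus contained in a Cartan subgroup). Combining Mazur's isogeny theorem with the known classification of rational points on the modular curves $X_{\textup{sp}}^+(p)$ and $X_{\textup{ns}}^+(p)$ then restricts the relevant primes to the finite list $\{2, 3, 5, 7, 11, 13, 17, 19, 37, 43, 67, 163\}$.

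Next, for each such prime $p$, the possible $p$-primary components are determined case by case. For the large sporadic primes $p \in \{11, 17, 19, 37, 43, 67, 163\}$, the set of $j$-invariants over $\Q$ admitting a rational $p$-isogeny is explicit and finite (consisting largely of CM curves), and a direct computation of the Galois representation on each resulting curve gives $E(\Q^{ab})[p^\infty] \cong \Z/p\Z$. For $p \in \{7, 13\}$, a similar analysis of the modular curves $X_0(p^k)$ and $X_1(p)$ restricts the structure to cyclic $\Z/p\Z$. The main work is for $p \in \{2, 3, 5\}$: here one must construct explicit models for modular curves of mixed level structure $X_H$ parameterizing elliptic curves whose mod-$p^n$ Galois image lies in a specified subgroup $H \subseteq \GL_2(\Z/p^n\Z)$, determine their $\Q$-rational points, and for each admissible $H$ compute the fixed subgroup $E[p^n]^{[H, H]}$ by a direct linear-algebra calculation. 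Combining the resulting $p$-primary lists yields the classification, and each group on the list is realized by exhibiting an explicit elliptic curve over $\Q$.

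The main obstacle is the last step for $p \in \{2, 3\}$, where the range of possible mod-$p^n$ images is largest and the corresponding modular curves of mixed level structure have the most intricate geometry. Ruling out borderline candidate structures such as $\Z/3\Z \oplus \Z/27\Z$, $\Z/9\Z \oplus \Z/9\Z$, $\Z/8\Z \oplus \Z/16\Z$, and $\Z/2\Z \oplus \Z/32\Z$ requires constructing explicit equations for a variety of genus-$0$ and genus-$\geq 1$ modular curves $X_H$ and determining their (often finite) $\Q$-rational points, which may involve descent, elliptic Chabauty, or rational-point computations on the associated Jacobians.
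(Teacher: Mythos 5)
Your prime-by-prime framework---detecting $E(\Q^{ab})[p^n]$ as the fixed space of the commutator subgroup of the mod-$p^n$ image---is sound and is essentially the mechanism the paper uses to control each $p$-primary piece, and your list of relevant primes is correct. The genuine gap is at the step ``combine via the Chinese Remainder Theorem.'' The set of groups occurring as $E(\Q^{ab})_{\textup{tors}}$ is \emph{not} the set of all direct sums of individually realizable $p$-primary parts: for instance $\Z/4\Z\times\Z/4\Z$ and $\Z/3\Z\times\Z/3\Z$ each occur but $\Z/6\Z\times\Z/12\Z$ does not; $\Z/2\Z\times\Z/2\Z$ and $\Z/13\Z$ each occur but $\Z/2\Z\times\Z/26\Z$ does not; $\Z/2\Z\times\Z/4\Z$ and $\Z/5\Z$ each occur but $\Z/2\Z\times\Z/20\Z$ does not. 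Ruling out these cross-prime combinations is where the bulk of the work lies, and it needs two ingredients absent from your outline: (i) Kenku's theorem bounding the total number of $\Q$-rational cyclic subgroups in an isogeny class, whose coupling of $C_2(E)$, $C_3(E)$, $C_5(E)$, $C_7(E)$ already forbids many simultaneous isogenies at distinct primes; and (ii) explicit rational-point computations on modular curves of genuinely \emph{composite} level---e.g.\ the locus of curves with a $13$-isogeny and square discriminant, or with a $10$-, $12$-, or $18$-isogeny together with a point of order $4$ over $\Q^{ab}$---which are fiber products across different primes, not the single-prime curves $X_H\subseteq X(p^n)$ you describe. Relatedly, the borderline groups you single out ($\Z/9\Z\oplus\Z/9\Z$, $\Z/8\Z\oplus\Z/16\Z$, $\Z/2\Z\oplus\Z/32\Z$) are among the easy exclusions (they die at once from the classification of abelian division fields and of $2$-power isogenies over $\Q$); the hard ones are the mixed-prime groups above, together with $\Z/3\Z\times\Z/27\Z$, which the paper eliminates by a CM $\mathcal{O}_K$-module argument rather than a modular-curve computation.

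A secondary issue: in bounding the primes you appeal to ``the known classification of rational points on $X_{\textup{ns}}^{+}(p)$,'' but no such complete classification exists; this is precisely the open part of Serre's uniformity problem. The case you actually need (the whole mod-$p$ image abelian, hence contained in a Cartan, with full determinant) is handled in the paper by the Gonz\'alez-Jim\'enez--Lozano-Robledo theorem, which shows an abelian $\Q(E[p])/\Q$ forces the image into a \emph{split} Cartan and yields two independent $p$-isogenies, whence $p\leq 5$. You should route the abelian-image case through that result rather than through rational points on nonsplit Cartan modular curves.
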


A uniform bound on the size of $E(\Q^{ab})_{\text{tors}}$ for all elliptic curves $E/\Q$ is an easy corollary of the classification.

\begin{corollary}\label{sizebound}
Let $E/\Q$ be an elliptic curve.  Then $\#E(\Q^{ab})_{\textup{tors}} \leq 163$.  This bound is sharp, as the curve \lmfdbec{26569}{a}{1} has a point of order $163$ over $\Q^{ab}$.
\end{corollary}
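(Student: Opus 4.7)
The plan is to deduce the corollary directly from Theorem~\ref{classification}. For the upper bound I would simply tabulate the orders of the groups appearing in the classification and take the maximum. Among the cyclic groups $\Z/N_1\Z$ the largest is $\Z/163\Z$, of order $163$. Among the non-cyclic families the largest orders are $\#(\Z/4\Z\times\Z/16\Z) = \#(\Z/8\Z\times\Z/8\Z) = 64$, $\#(\Z/2\Z\times\Z/18\Z) = \#(\Z/6\Z\times\Z/6\Z) = 36$, $\#(\Z/3\Z\times\Z/9\Z) = 27$, and $\#(\Z/5\Z\times\Z/5\Z) = 25$, all well below $163$. Thus $\#E(\Q^{ab})_{\mathrm{tors}} \leq 163$ for every elliptic curve $E/\Q$.

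For sharpness I would exhibit the curve $E = $ \lmfdbec{26569}{a}{1}, which has complex multiplication by the maximal order of $K = \Q(\sqrt{-163})$. Since this order has class number one, $j(E) \in \Q$ is automatic and the isogeny corresponding to a prime above $163$ in $\mathcal{O}_K$ is defined over $K$. The kernel of this isogeny is a cyclic $\Gal(\overline{\Q}/K)$-stable subgroup of order $163$, on which the Galois action factors through a character of $\Gal(\overline{K}/K)$. By the main theorem of complex multiplication this character factors through a ray class group of $K$, which combined with class field theory over $K$ and the fact that $K \subset \Q^{ab}$ (as an imaginary quadratic field) shows that a generator of the kernel is defined over an abelian extension of $\Q$. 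Hence $E$ acquires a point of order $163$ over $\Q^{ab}$, making the bound sharp.

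The main technical point is the sharpness claim: one must verify that the field of definition of a $163$-torsion generator is actually abelian over $\Q$ and not merely over $K$. In practice this presumably appears as a byproduct of the construction of this curve in the proof of Theorem~\ref{classification} (where $\Z/163\Z$ must be realized as an entry), so I would either invoke that construction directly or carry out a short explicit computation with the CM formalism on the specific Weierstrass model.
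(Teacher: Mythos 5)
Your upper bound argument is fine and matches the paper: the bound is read off from the list of possible groups (the paper notes it already follows from the bounding propositions of Section~\ref{sec:bounding} before the full classification, but tabulating orders from Theorem~\ref{classification} is the same computation).

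The sharpness argument, however, has a genuine gap that you partly flag but do not close. From the main theorem of complex multiplication you get that the kernel generator lives in a ray class field of $K=\Q(\sqrt{-163})$, i.e.\ in an extension that is abelian over $K$. But ``abelian over $K$'' together with ``$K\subset\Q^{ab}$'' does \emph{not} imply ``abelian over $\Q$'': ray class fields and ring class fields of imaginary quadratic fields are typically generalized dihedral over $\Q$, not abelian. So the inference in your second paragraph is invalid as written. The correct (and much shorter) route, which is the one the paper takes, is Lemma~\ref{isogimpliespoint}: the curve \lmfdbec{26569}{a}{1} admits a cyclic $163$-isogeny defined over $\Q$ (equivalently, since $163$ ramifies in $K$, the prime $\mathfrak{p}$ above it satisfies $\mathfrak{p}=\overline{\mathfrak{p}}$, so the kernel $E[\mathfrak{p}]$ is stable under all of $\Gal(\overline{\Q}/\Q)$, not merely $\Gal(\overline{\Q}/K)$). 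The Galois action on this stable cyclic subgroup then factors through a character $\Gal(\overline{\Q}/\Q)\to(\Z/163\Z)^{\times}$, whose image is abelian, so the generator is defined over $\Q^{ab}$. No CM formalism or class field theory over $K$ is needed; what you need is $\Q$-rationality of the isogeny, which is exactly what your argument omits to use.
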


In Section \ref{sec:isogeny} we discuss what is known about isogenies of elliptic curves over $\Q$. We then discuss the intimate connection between isogenies and torsion points over $\Q^{ab}$. In Section \ref{sec:bounding} we use the results from section \ref{sec:isogeny} to prove bounds on the group $E(\Q^{ab})_{\text{tors}}$ based on the isogenies $E$ has over $\Q$. In Section \ref{sec:class} we further refine the bounds to eliminate the possibility of any group not appearing in Theorem \ref{classification}. In Section \ref{sec:algorithm} we construct an algorithm to determine $E(\Q^{ab})_{\text{tors}}$ for any elliptic curve $E/\Q$. Finally, Section \ref{sec:examples} has, for each subgroup $T$ appearing in Theorem \ref{classification}, an example of an elliptic curve $E/\Q$ such that $E(\Q^{ab})_{\text{tors}} \cong T$ completing the proof of Theorem \ref{classification}. We use Cremona labels for our elliptic curves and more information on each curve can be found on the LMFDB \cite{lmfdb}.

\noindent \textbf{Acknowledgements.} This work was inspired by \cite{gonzlozano}, and the author would like to thank \'{A}lvaro Lozano-Robledo for his invaluable guidance and input. The author would like to thank Jeremy Rouse and David Zureick-Brown for their helpful advice concerning modular curves. The author would also like to thank Pete Clark and Drew Sutherland for their interest and support for this project. Also, thanks to Harris Daniels and Filip Najman for their comments and suggestions.

\section{Isogenies}\label{sec:isogeny}

In the rest of the paper, when we refer to an isogeny, we will mean a cyclic $\Q$-rational isogeny. The classification of $\Q$-rational $n$-isogenie is an integral part of the classification of torsion of elliptic curves $E/\Q$ over $\Q^{ab}$.

\begin{thm}[Fricke, Kenku, Klein, Kubert, Ligozat, Mazur, and Ogg, among others]\label{isogoverQ}
If $E/\mathbb{Q}$ has an $n$-isogeny, $n \leq 19$ or $n \in \{21,25,27,37,43,67,163\}$.  If $E$ does not have complex multiplication, then $n \leq 18$ or $n \in \{21, 25, 37\}$.  
\end{thm}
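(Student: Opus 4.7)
The plan is to recast the statement as one about non-cuspidal $\Q$-rational points on the modular curves $X_0(n)$, which parameterize pairs $(E,C)$ with $C$ a cyclic subgroup of order $n$. An elliptic curve $E/\Q$ admits a cyclic $\Q$-rational $n$-isogeny if and only if $X_0(n)(\Q)$ contains a non-cuspidal point mapping to $j(E)$ under the $j$-map. The problem therefore reduces to describing $X_0(n)(\Q)$ modulo cusps for every $n$, and then sorting the resulting $j$-invariants into CM and non-CM.

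First I would stratify by the genus of $X_0(n)$. For the fifteen levels $n \in \{1,\ldots,10,12,13,16,18,25\}$ of genus zero, a $\Q$-rational cusp exhibits $X_0(n) \cong \PP^1_\Q$, producing infinitely many non-cuspidal $\Q$-points; these are the classical Hauptmodul cases in which $j$ and the Atkin--Lehner-transformed $j_n$ can be written as explicit rational functions of a single parameter (work of Fricke, Klein, and Ligozat). For the twelve levels of genus one, namely $n \in \{11,14,15,17,19,20,21,24,27,32,36,49\}$, the curve $X_0(n)$ is itself an elliptic curve over $\Q$, and I would compute $X_0(n)(\Q)$ as its Mordell--Weil group. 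Each such $X_0(n)$ turns out to have rank zero over $\Q$, so its rational points coincide with its small torsion subgroup, which can be listed directly; extracting the non-cuspidal points leaves non-trivial isogeny degrees only at $n \in \{11,14,15,17,19,21,27\}$.

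The chief obstacle is the case $g(X_0(n)) \geq 2$. For prime $n$ this is precisely Mazur's isogeny theorem \cite{mazur1}: $X_0(p)(\Q)$ contains a non-cuspidal point only for $p \in \{2,3,5,7,11,13,17,19,37,43,67,163\}$. The proof proceeds through the Eisenstein quotient of $J_0(p)$ and the formal immersion method at a prime of good reduction, and is by far the deepest ingredient in the classification; I would invoke it as a black box. For composite higher-genus $n$, the key observation is that a cyclic $n$-isogeny factors through every prime divisor $p$ of $n$, so that both $E$ and $E/E[p]$ inherit a cyclic $p$-isogeny, forcing $p$ to lie on Mazur's list. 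Combining this with the previous cases sharply bounds the admissible prime-power parts of $n$, leaving only finitely many composite $n$ (such as $22, 26, 28, 33, 35, 39, 45, 50, \ldots$) to examine; these are dispatched by Kenku's explicit analyses of their Jacobians and ad hoc rational-point computations on the corresponding $X_0(n)$.

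Finally, to split the CM and non-CM lists, I would compare the surviving $n$ against the thirteen rational CM $j$-invariants. One checks that for $n \in \{19, 27, 43, 67, 163\}$ every non-cuspidal $\Q$-point of $X_0(n)$ has a CM $j$-invariant (for example, $j=-2^{15}\cdot 3\cdot 5^3\cdot 11^3\cdot \ldots$-type values for the large primes), so these levels are CM-only; removing them from the overall list yields the non-CM conclusion $n \leq 18$ or $n \in \{21,25,37\}$.
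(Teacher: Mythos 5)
Your outline is correct and is essentially the standard proof from the literature: the paper itself offers no proof of this theorem, citing it as known (with Lozano-Robledo's survey, Section 9, as the reference), and your stratification by genus of $X_0(n)$ --- the fifteen genus-zero levels, the twelve genus-one levels with rank zero and the correct residual set $\{11,14,15,17,19,21,27\}$, Mazur's theorem for primes of higher genus, Kenku's treatment of the remaining composite levels, and the removal of the five CM-only levels $19,27,43,67,163$ to obtain the non-CM statement --- is exactly the argument those citations package. The only caveat is that, as you acknowledge, Mazur's Eisenstein-quotient/formal-immersion argument and Kenku's case analyses are invoked as black boxes, so this is a correct roadmap rather than a self-contained proof, which is all that can reasonably be asked here.
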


See \cite{lozanorobledo1}, Section 9 for a more detailed discussion of this theorem. Moreover, there is a detailed bound on the number of $\Q$-isogenies an elliptic curve can have. The following Theorem is from \cite{kenku}, combining Theorem 2 and the surrounding discussion.

\begin{theorem}[Kenku, \cite{kenku}]\label{8qisogs}
There are at most eight $\Q$-isomorphism classes of elliptic curves in each $\Q$ isogeny class.

Let $C_{p}(E)$ denote the number of distinct $\Q$-rational cyclic subgroups of order $p^{n}$ for any $n$ of $E$. Then, we have the following table for bounds on $C_{p}$ for any elliptic curve over $\Q$
$$
\begin{array}{|c|ccccccccccccc|}
\hline p & 2 & 3 & 5 & 7 & 11 & 13 & 17 & 19 & 37 & 43 & 67 & 163 & \text{else} \\ \hline
 C_{p} & 8 & 4 & 3 & 2 & 2 & 2 & 2 & 2 & 2 & 2 & 2 & 2 & 1 \\ \hline
\end{array}
$$

In particular, fix a $\Q$-isogeny class and a representative $E$ of that class.  

\begin{itemize}

\item If $C_{p}(E) = 2$ for some prime $p \geq 11$, then $C_{q}(E)=1$ for all other primes.  So $C(E)=2$.

\item If $C_7(E)=2$, then $C_5(E)=1$ and either $C_3(E) \leq 2$ and $C_2(E) = 1$ or $C_3(E)=1$ and $C_2(E) \leq 2$.  All these yield $C(E) \leq 4$.

\item If $C_5(E)=3$, then $C_p(E)=1$ for all primes $p \neq 5$.

\item If $C_5(E)=2$, then either $C_3(E) \leq 2$ and $C_2(E)=1$ or $C_3(E)=1$ and $C_2(E) \leq 2$.  Hence $C(E) \leq 4$.

\item If $C_3(E)=4$, then there exists a representative of the class of $E$ with a $\Q$-rational cyclic subgroup of order $27$, and $C_2(E)=1$ so $C(E) \leq 4$.

\item If $C_3(E)=3$, then $C_2(E) \leq 2$ so that $C(E) \leq 6$.

\item If $C_3(E) \leq 2$, then $C_2(E) \leq 4$ so that $C(E) \leq 8$.

\end{itemize}

Note the fact that $C(E)=8$ is possible only if $C_2(E)=8$ or $C_3(E)=2$ and $C_2(E)=4$.

\end{theorem}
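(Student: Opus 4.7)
The plan is to reduce the entire statement to Theorem \ref{isogoverQ} via multiplicativity of cyclic isogenies. For $\Q$-rational cyclic subgroups $C, D \subseteq E$ of coprime orders $m, n$, the sum $C + D$ is a $\Q$-rational cyclic subgroup of order $mn$; conversely, any cyclic subgroup of order $\prod p^{a_p}$ decomposes uniquely into its $p$-Sylow pieces. Hence the $\Q$-rational cyclic subgroups of $E$ are in bijection with tuples of cyclic $p$-power subgroups (allowing the trivial subgroup at each prime), giving $C(E) = \prod_p C_p(E)$; moreover, each composite order $\prod p^{a_p}$ arising this way must appear in the isogeny-degree list of Theorem \ref{isogoverQ}.

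With this reduction in hand, I would first establish the table prime by prime. For $p \notin \{2, 3, 5, 7, 11, 13, 17, 19, 37, 43, 67, 163\}$, Theorem \ref{isogoverQ} directly gives $C_p = 1$. For primes $p \geq 11$ in the list, Theorem \ref{isogoverQ} also forbids $p^2$-isogenies, so $C_p$ counts only Galois-stable lines in $E[p]$ (plus the trivial subgroup). Because $\det \rho_{E,p} = \chi_p$ is not a square character (for $p \geq 3$, no character $\psi$ of \GQ\ satisfies $\psi^2 = \chi_p$, as a short check on image orders in $\F_p^*$ shows), the mod-$p$ image is never scalar, so at most two lines in $E[p]$ are Galois-stable, yielding $C_p \leq 2$. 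For $p = 7$ the same argument applies. For $p = 5$, where $25$-isogenies are permitted, a parallel analysis of the tree of Galois-stable cyclic $5$-power subgroups gives $C_5 \leq 3$. For $p = 2$ and $p = 3$, with chains up to $16$ and $27$ respectively, the longer isogeny chains permit more branching, and bounding $C_3 \leq 4$ and $C_2 \leq 8$ requires a careful case analysis of the $p$-adic Galois image alongside Theorem \ref{isogoverQ}'s restrictions.

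With the table established, each bullet follows by enumerating permitted composite degrees. For example, if $C_p(E) = 2$ for some $p \geq 11$, any additional nontrivial cyclic $q$-subgroup would combine with the $p$-subgroup to yield a cyclic isogeny of composite degree $pq \geq 22$, which does not appear in Theorem \ref{isogoverQ}; hence $C_q = 1$ for all $q \neq p$ and $C(E) = 2$. The remaining bullets fall out similarly, by enumerating which products $2^a 3^b 5^c 7^d$ (with $a \leq 4$, $b \leq 3$, $c \leq 2$, $d \leq 1$) appear in Theorem \ref{isogoverQ}'s list. Finally, since each cyclic $\Q$-rational subgroup $C \subseteq E$ produces a curve $E/C$ in the $\Q$-isogeny class of $E$, and $C(E) \leq 8$ in every bullet, there are at most $8$ isomorphism classes in any $\Q$-isogeny class.

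The main obstacle will be bounding $C_2 \leq 8$ and $C_3 \leq 4$: for these small primes the cyclotomic character is small (trivial for $p=2$, of order $2$ for $p=3$), so the Galois representation on $E[p^n]$ admits rich configurations of Galois-stable cyclic subgroups across levels of the tree. Enumerating these configurations subject to Theorem \ref{isogoverQ}'s prohibitions (in particular, the absence of $32$-isogenies for $p=2$ and $81$-isogenies for $p=3$) is the technical heart of the argument.
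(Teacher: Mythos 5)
The paper does not prove this statement; it is imported verbatim from Kenku's 1982 paper (plus its surrounding discussion), so there is no internal proof to compare against, and your sketch has to stand on its own. Your skeleton is the right one: $C(E)=\prod_p C_p(E)$ by Sylow decomposition of Galois-stable cyclic subgroups, every member of the isogeny class is $E/C$ for a $\Q$-rational cyclic $C$, and every such $C$ has order in the list of Theorem \ref{isogoverQ}. But there is a concrete error in your derivation of the table for $p\geq 7$. The non-scalarity argument via $\det\rho_{E,p}=\chi_p$ not being a square of a character shows only that at most \emph{two} lines of $E[p]$ are Galois-stable; since $C_p$ also counts the trivial subgroup, this gives $C_p\leq 3$, not the claimed $C_p\leq 2$. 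What must be excluded is the split-Cartan configuration (exactly two independent stable lines), about which non-scalarity says nothing. The standard fix: if $\langle P\rangle$ and $\langle Q\rangle$ are independent stable lines, the image of $[p]^{-1}\langle Q\rangle$ in $E/\langle P\rangle$ is a $\Q$-rational \emph{cyclic} subgroup of order $p^2$, so some curve over $\Q$ acquires a $p^2$-isogeny; for $p\geq 7$ this contradicts Theorem \ref{isogoverQ} since $p^2\geq 49$ is not in the list. (For $p=5$ it does not, which is exactly why $C_5$ can reach $3$.)

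The same mechanism shows that the bullets do not all ``fall out'' from enumerating composite orders of subgroups of $E$ itself. For example, $C_3(E)=2$ together with $C_2(E)=6$ realized by the degree multiset $\{1,2,2,2,4,4\}$ (a genuine row of Table \ref{table:RDZB}) gives $E$ only cyclic subgroups of order dividing $12$, all permitted; the contradiction appears only after observing that a non-backtracking length-$3$ path in the $2$-isogeny tree joins two \emph{other} curves of the class by a cyclic $8$-isogeny, which together with the transferred $3$-line yields a forbidden $24$-isogeny. So the enumeration must be carried out over all pairs of curves in the class, not just from $E$. Finally, the genuinely hard inputs --- $C_2\leq 8$, $C_3\leq 4$, $C_5\leq 3$, and the claim that $C_3=4$ forces a $27$-subgroup on some representative --- are not consequences of Theorem \ref{isogoverQ} plus Galois-image generalities at all (a tree of diameter $4$ with vertex degrees $\leq 3$ can have $10$ vertices, so even $C_2\leq 8$ needs more); they require determining the rational points of modular curves such as $X_0(32)$, $X_0(81)$, $X_0(125)$ and of fiber products like $X_0(2^a)\times_{X(1)}X_0(3)$, which is the actual content of Kenku's papers. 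You correctly flag $p=2,3$ as the technical heart, but as written the proposal both contains the off-by-one error above and leaves the load-bearing steps unexecuted.
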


The first connection between isogenies and points over $\Q^{ab}$ is shown in the following Lemma.

\begin{lemma}\label{isogimpliespoint}
If $E/\Q$ has an $n$-isogeny defined over $\Q$ then $E(\Q^{ab})$ has a point of order $n$.
\end{lemma}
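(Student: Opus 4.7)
The plan is to unpack what a $\Q$-rational cyclic $n$-isogeny gives us and then read off Galois-theoretically that a generator of its kernel is defined over an abelian extension of $\Q$.

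First I would let $\phi \colon E \to E'$ be the given $\Q$-rational isogeny and set $C = \ker \phi$, a cyclic subgroup of $E(\overline{\Q})$ of order $n$. Because $\phi$ is defined over $\Q$, the subgroup $C$ is stable under the action of $G_\Q := \Gal(\overline{\Q}/\Q)$ (even though its individual points need not be). Fix a generator $P \in C$. Then for every $\sigma \in G_\Q$ we have $\sigma(P) \in C$, so $\sigma(P) = \chi(\sigma)\cdot P$ for a unique $\chi(\sigma) \in (\Z/n\Z)^\times$, and one checks that
\[
\chi \colon G_\Q \longrightarrow \Aut(C) \cong (\Z/n\Z)^\times
\]
is a continuous group homomorphism.

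The key observation is that the target $(\Z/n\Z)^\times$ is abelian. Therefore $\ker(\chi)$ contains the closure of the commutator subgroup $[G_\Q,G_\Q]$, which by the Kronecker--Weber theorem is exactly $\Gal(\overline{\Q}/\Q^{ab})$. Every element of $\Gal(\overline{\Q}/\Q^{ab})$ thus fixes $P$, so $P \in E(\Q^{ab})$. Since $P$ has order $n$ by construction, $E(\Q^{ab})$ contains a point of order $n$, as required.

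The argument is essentially formal once one notes the abelianness of $\Aut(C)$ for cyclic $C$; I expect no obstacle. The only subtlety to be careful about is distinguishing between the subgroup $C$ being $\Q$-rational (which follows from $\phi$ being defined over $\Q$) and the individual points of $C$ being $\Q$-rational (which is generally false); the whole point of the lemma is that, while points of $C$ live in some extension, the Galois representation on $C$ factors through an abelian quotient and therefore that extension is contained in $\Q^{ab}$.
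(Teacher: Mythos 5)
Your proof is correct and follows essentially the same route as the paper: both arguments use that the Galois-stable cyclic kernel yields a character $G_\Q \to (\Z/n\Z)^\times$, whose abelian image forces the generator $P$ to be defined over an abelian extension of $\Q$. (Minor quibble: identifying $\Gal(\overline{\Q}/\Q^{ab})$ with the closure of the commutator subgroup is just Galois theory and the definition of the maximal abelian extension, not Kronecker--Weber, but this does not affect the argument.)
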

\begin{proof}
Let $\varphi$ denote the $n$-isogeny over $\Q$. Then $\ker(\varphi) = \langle P \rangle$ for some point $P \in E(\overline{\Q})$ of order $n$ such that $\langle P \rangle^{\sigma} = \langle P \rangle$ for all $\sigma \in \Gal(\overline{\Q}/\Q)$. This induces a character
$$\psi : \Gal(\overline{\Q}/\Q) \rightarrow \left(\Z/n\Z\right)^{\times}$$
defined by $\sigma \mapsto a \bmod n$ where $a$ is given by $\sigma(P) = aP$. The kernel of $\psi$ is precisely $\Gal(\overline{\Q}/\Q(P))$, and thus we have that $\Gal(\Q(P)/\Q)$ is isomorphic to a subgroup of $\left(\Z/n\Z\right)^{\times}$, and hence abelian. Therefore, $P \in E(\Q^{ab})$.
\end{proof}

Given an elliptic curve $E/\Q$, due to Ribet's theorem we know that there exists $m,n \in \Z^{\geq 0}$ such that $E(\Q^{ab})_{\text{tors}} \cong \Z/m\Z \times \Z/mn\Z$.  We wish to understand what possible $m$ and $n$ can occur together.  

In regards to the values of $m$, normally one could use an argument via the Weil-pairing which implies that our field must contain $\zeta_m$, however this is not very restrictive when looking at torsion over $\Q^{ab}$.  Instead, we have the following Theorem:

\begin{theorem}[Gonz\'{a}lez-Jim\'{e}nez, Lozano-Robledo; \cite{gonzlozano}, Theorem 1.1]\label{fulltorsionabelian}
Let $E/\Q$ be an elliptic curve.  If there is an integer $n \geq 2$ such that $\Q(E[n]) = \Q(\zeta_n)$, then $n=2,3,4, \text{ or }5$.  More generally, if $\Q(E[n])/\Q$ is abelian, then $n = 2,3,4,5,6, \text{ or }8$.  Moreover, $\Gal(\Q(E[n]/\Q)$ is isomorphic to one of the following groups:\\
\begin{center}
\begin{tabular}{|c||c|c|c|c|c|c|} \hline
$n$ & $2$ & $3$ & $4$ & $5$ & $6$ & $8$ \\ \hline
\multirow{4}{*}{$\Gal(\Q(E[n]/\Q)$} & $\{0 \}$ & $\Z/2\Z$ & $\Z/2\Z$ & $\Z/4\Z$ & $(\Z/2\Z)^2$ & $(\Z/2\Z)^4$ \\
 & $\Z/2\Z$ & $(\Z/2\Z)^2$ & $(\Z/2\Z)^2$ & $\Z/2\Z \times \Z/4\Z$ & $(\Z/2\Z)^3$ & $(\Z/2\Z)^5$ \\
 & $Z/3\Z$ & & $(\Z/2\Z)^3$ & $(\Z/4\Z)^2$ & & $(\Z/2\Z)^6$ \\
 & & & $(\Z/2\Z)^4$ & & & \\ \hline
\end{tabular}
\end{center}
Furthermore, each possible Galois group occurs for infinitely many distinct j-invariants.
\end{theorem}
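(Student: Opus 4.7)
The strategy is to translate the hypothesis on $\Q(E[n])/\Q$ into a condition on the image $H_n := \image(\rho_{E,n}) \subseteq \GL_2(\Z/n\Z)$ of the mod-$n$ Galois representation. Since $\Q(E[n])$ is the fixed field of $\ker(\rho_{E,n})$, the extension $\Q(E[n])/\Q$ is abelian if and only if $H_n$ is an abelian subgroup, and the Weil pairing identifies $\det \rho_{E,n}$ with the cyclotomic character, so the condition $\Q(E[n]) = \Q(\zeta_n)$ is equivalent to $\det : H_n \to (\Z/n\Z)^\times$ being an isomorphism. In particular, for the first assertion we must have $|H_n| = \varphi(n)$, while for the second we only need $H_n$ to be abelian and $\det H_n = (\Z/n\Z)^\times$.

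First I would apply the Chinese Remainder Theorem to decompose $H_n$ into a product of $p$-parts, reducing to the case $n = p^k$. For each prime $p$, the abelian subgroups of $\GL_2(\F_p)$ are classified up to conjugation: they are contained in either a split Cartan $(\F_p^\times)^2$, a non-split Cartan $\F_{p^2}^\times$, or (for $p \in \{2, 3\}$) certain exceptional small subgroups. Combined with the surjectivity of $\det$ and the deep restrictions on mod-$p$ Galois images of elliptic curves over $\Q$ coming from Mazur's isogeny theorem, Serre's open image theorem for non-CM curves, and the explicit description of the image for CM curves, one shows that abelian image at level $p$ forces either $p \in \{2, 3, 5\}$ or a CM field $K$ compatibly embedded in $\Q(\zeta_{p^k})$, the latter situation also being tightly constrained.

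The main technical obstacle is the lifting from level $p$ to $p^k$. One must show that for $p$ odd with $k \geq 2$ (outside the surviving cases) or $p = 2$ with $k \geq 4$, the requirement that $H_{p^k}$ be abelian with surjective determinant is incompatible with the allowed images at level $p$. This is done by studying the reduction sequence whose kernel is $I + p \cdot M_2(\Z/p^k\Z)$ as a $G_\Q$-module via a 1-cocycle computation, producing an explicit cohomological obstruction that forces the lift to be non-abelian. After this analysis, only the values $n \in \{2, 3, 4, 5, 6, 8\}$ survive, and the stronger condition $\Q(E[n]) = \Q(\zeta_n)$ further restricts $n$ to $\{2, 3, 4, 5\}$ by comparing degrees $[\Q(E[n]):\Q(\zeta_n)]$, which forces $H_n \hookrightarrow (\Z/n\Z)^\times$ via $\det$.

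Finally, for each surviving $n$ and each candidate abelian Galois group in the table, I would exhibit infinitely many $j$-invariants realizing it. In every case, the modular curve parametrizing elliptic curves whose mod-$n$ Galois image is contained in the prescribed abelian subgroup is of genus zero with a rational point, so it admits an explicit rational parametrization; specializing this parametrization and verifying along a Zariski-open subset that the image of $\rho_{E,n}$ does not drop to a smaller subgroup yields the desired infinite families. The hardest step throughout is the prime-power lifting obstruction, since the pure representation-theoretic analysis over $\F_p$ is not enough and one needs the cocycle computation to rule out lifts that remain abelian at all higher levels.
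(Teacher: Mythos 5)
First, a point of reference: the paper does not prove this theorem at all --- it is imported verbatim from Gonz\'alez-Jim\'enez and Lozano-Robledo \cite{gonzlozano}, so there is no internal proof to compare against. Judged on its own, your sketch sets up the correct dictionary ($\Q(E[n])/\Q$ abelian iff $H_n=\image(\rho_{E,n})$ is abelian, and $\Q(E[n])=\Q(\zeta_n)$ iff $\det$ is injective on $H_n$, since surjectivity of $\det$ is automatic over $\Q$), and the CRT reduction to prime powers is the right first move. But the central step has a genuine gap. A non-split Cartan subgroup of $\GL(2,\F_p)$ is abelian \emph{and} has surjective determinant, so your classification of abelian subgroups, even combined with Mazur's isogeny theorem, does not force the image into a split Cartan; and Serre's open image theorem is ineffective and silent on CM curves, so it cannot supply the uniform bound $p\le 5$. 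The missing ingredient is complex conjugation: for $p>2$ it is a non-scalar involution of determinant $-1$ with both eigenvalues $\pm 1$ in $\F_p$, so an abelian image containing it is simultaneously diagonalizable and hence lies in a \emph{split} Cartan (no non-split Cartan contains such an element, as the norm of the unique involution $-1$ of $\F_{p^2}^\times$ is $+1$). This is what produces two independent $p$-isogenies, and the effective bound then comes from Kenku's count $C_p(E)$ of cyclic $p$-power subgroups (Theorem \ref{8qisogs}): $C_p(E)\ge 3$ forces $p\le 5$. This is exactly the content of Corollary \ref{CPlowerbound}, which the present paper quotes and relies on.

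The prime-power lifting step is also not established as written: the ``1-cocycle obstruction'' on $I+p\cdot M_2(\Z/p^k\Z)$ is asserted rather than carried out, and it is not how the cited proof proceeds. The actual mechanism is again isogeny counting --- an abelian $\Q(E[p^k])$ yields, via the split-Cartan structure mod $p$ together with Lemma \ref{isogeny}-type arguments, too many cyclic $p$-power subgroups to be compatible with Kenku's table (e.g.\ ruling out $n=9,25,27$) --- supplemented at $p=2$ by explicit computation with the Rouse--Zureick-Brown classification of $2$-adic images to separate $n=8$ from $n=16$. Finally, the realizability claim (``in every case the relevant modular curve is genus zero with a rational point'') is plausible but unverified for each of the thirteen entries of the table; as stated it is an assertion, not a proof.
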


In fact, if $E(\Q^{ab})_{\text{tors}} \cong \Z/m\Z \times \Z/mn\Z$, both values $m$ and $n$ are controlled primarily by isogenies. For instance, in the proof of Theorem \ref{fulltorsionabelian}, Gonz\'{a}lez-Jim\'{e}nez and Lozano-Robledo make use of a key corollary relating full-$p$-torsion over $\Q^{ab}$ to $\Q$-rational $p$-isogenies:


\begin{corollary}[Gonz\'{a}lez-Jim\'{e}nez, Lozano-Robledo; \cite{gonzlozano}, Corollary 3.9]\label{CPlowerbound}
Let $E/\Q$ be an elliptic curve, let $p > 2$ be a prime, and suppose that $\Q(E[p])/\Q$ is abelian.  Then, the $\Q$-isogeny class of $E$ contains at least three distinct $\Q$-isomorphism classes, and $C_{p}(E) \geq 3$.  In particular $p \leq 5$.
\end{corollary}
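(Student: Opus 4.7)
My approach is to pin down the image $G := \rho_{E,p}(\Gal(\overline{\Q}/\Q)) \subseteq \GL_2(\F_p)$, which is abelian by hypothesis, and to read off the conclusion from its shape. The first step is to classify abelian subgroups of $\GL_2(\F_p)$: any such subgroup is contained in one of three maximal abelian subgroups, namely the split Cartan $T \cong (\F_p^\times)^2$ (diagonal matrices), the non-split Cartan $T' \cong \F_{p^2}^\times$, or the subgroup of matrices of the form $\lambda I + \mu N$ for a fixed rank-one nilpotent $N$.

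The second step uses $\det \rho_{E,p} = \chi_p$, the surjective mod-$p$ cyclotomic character, to rule out all but the split Cartan for odd $p$. In the scalar-plus-nilpotent case every element has determinant $\lambda^2$, so the image of $\chi_p$ would lie in the squares of $\F_p^\times$, contradicting surjectivity when $p>2$. In the non-split Cartan case, complex conjugation $c$ satisfies $\rho_{E,p}(c)^2 = I$ and $\det \rho_{E,p}(c) = -1$; but the unique order-two element of $\F_{p^2}^\times$ is $-1$, which embeds in $\GL_2(\F_p)$ as $-I$, of determinant $+1$. Hence $G$ is simultaneously diagonalizable and $E[p] = C_1 \oplus C_2$ for two distinct non-trivial $\Q$-rational cyclic subgroups of order $p$.

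These two cyclic subgroups, together with the trivial subgroup, give $C_p(E) \geq 3$, and inspecting Theorem \ref{8qisogs} this forces $p \in \{3,5\}$, hence $p \leq 5$. To verify that the $\Q$-isogeny class contains at least three distinct isomorphism classes, I would consider $E$, $E/C_1$, and $E/C_2$. An identification $E \cong E/C_i$ over $\Q$ would produce a $\Q$-rational endomorphism of $E$ of degree $p$; since $\operatorname{End}_\Q(E) = \Z$ (even in the CM case, since the CM is defined only over the imaginary quadratic field), this would equal $[n]$ with $n^2 = p$, impossible. An isomorphism $E/C_1 \cong E/C_2$ over $\Q$ via some $\iota$ would give $\hat\phi_2 \circ \iota \circ \phi_1 \in \operatorname{End}_\Q(E) = \Z$ of degree $p^2$, hence equal to $[\pm p]$; taking duals and comparing kernels with $\hat\phi_1 \circ \phi_1 = [p]$ forces $C_1 = C_2$, a contradiction.

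The main obstacle I anticipate is the elimination of the non-split Cartan case: one must locate the unique order-two element of the cyclic group $\F_{p^2}^\times$, identify its image in $\GL_2(\F_p)$ as $-I$, and verify that its determinant $+1$ disagrees with $\chi_p(c) = -1$. This is precisely where the hypothesis $p > 2$ is essential, as the argument collapses when $-1 = +1$ in $\F_p$.
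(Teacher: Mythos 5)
The paper does not prove this statement itself — it is imported verbatim as Corollary 3.9 of Gonz\'alez-Jim\'enez--Lozano-Robledo \cite{gonzlozano} — so there is no internal proof to compare against, but your reconstruction is correct and is essentially the argument of that reference. The key steps all check out: an abelian mod-$p$ image must lie in a maximal abelian subgroup of $\GL_2(\F_p)$, surjectivity of $\det\rho_{E,p}=\chi_p$ kills the scalar-plus-nilpotent case and the image of complex conjugation kills the non-split Cartan case (both using $p>2$), the two eigenlines of the resulting split Cartan give $C_p(E)\ge 3$ and hence $p\le 5$ by Kenku's table, and $\operatorname{End}_{\Q}(E)=\Z$ (valid even for CM curves over $\Q$) separates $E$, $E/C_1$, $E/C_2$ into three distinct $\Q$-isomorphism classes, with the final cancellation $\hat\varphi_2\circ\iota=\pm\hat\varphi_1$ and its dual correctly forcing $C_1=C_2$.
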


In particular, the proof of Corollary 2.4 in \cite{gonzlozano} shows that for all  $p > 2$ if $\Q(E[p])/\Q$ is abelian for some $E/\Q$, then $E$ has two independent $p$-isogenies over $\Q$. Note that the converse is also true.

\begin{lemma}\label{lem:Epabelian}
Let $E/\Q$ be an elliptic curve, let $p$ be a prime, and suppose that $E$ has two distinct $p$-isogenies over $\Q$. Then $\Q(E[p])/\Q$ is abelian.
\end{lemma}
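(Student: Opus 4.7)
The plan is to argue via the mod-$p$ Galois representation attached to $E$. The hypothesis that $E$ has two distinct $\Q$-rational $p$-isogenies furnishes two distinct $\Q$-rational cyclic subgroups $C_1, C_2 \subset E[p]$ of order $p$, i.e.\ two distinct lines in the $\F_p$-vector space $E[p]$ that are each stable under the absolute Galois group $G_\Q = \Gal(\overline{\Q}/\Q)$.

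Pick generators $P \in C_1$ and $Q \in C_2$. Since $C_1 \ne C_2$ and both are one-dimensional $\F_p$-subspaces of the two-dimensional space $E[p]$, the pair $\{P,Q\}$ is a basis of $E[p]$. Writing the mod-$p$ representation
\[
\rho_{E,p}: G_\Q \longrightarrow \Aut(E[p]) \cong \GL_2(\F_p)
\]
in this basis, the fact that both lines $C_1$ and $C_2$ are Galois-stable means that $\rho_{E,p}(\sigma)$ is diagonal for every $\sigma \in G_\Q$. Hence the image of $\rho_{E,p}$ lies in the diagonal torus of $\GL_2(\F_p)$, which is isomorphic to $(\F_p^\times)^2$ and in particular abelian.

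The extension $\Q(E[p])/\Q$ is by definition the fixed field of $\ker(\rho_{E,p})$, so $\Gal(\Q(E[p])/\Q) \cong \image(\rho_{E,p})$. Since the image is abelian, $\Q(E[p])/\Q$ is abelian, as required. (In the case $p=2$ there is nothing subtle: two distinct Galois-stable subgroups of order $2$ force two rational $2$-torsion points, and hence the third as their sum, so $\Q(E[2])=\Q$.) There is no real obstacle here; the content of the lemma is just the observation that two independent $\Q$-isogenies of the same prime degree simultaneously diagonalize the mod-$p$ representation, and diagonal matrices commute.
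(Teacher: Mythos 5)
Your proof is correct and is essentially the same as the paper's: both arguments choose generators of the two distinct Galois-stable lines as a basis of $E[p]$, observe that the image of $\rho_{E,p}$ then lands in the diagonal (split Cartan) subgroup of $\GL_2(\F_p)$, which is abelian, and conclude via $\Gal(\Q(E[p])/\Q) \cong \image(\rho_{E,p})$. The extra remark about $p=2$ is harmless but not needed, since the diagonal-torus argument already covers that case.
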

\begin{proof}
Since $E$ has two independent $p$-isogenies, there exists a basis $\{ P, Q \}$ of $E[p]$ so that the image of $\rho_{E,p}$ is contained in a split Cartan subgroup of $\GL(2,p)$. Now, since $\ker \rho_{E,p} = \Gal(\overline{\Q}/\Q(E[p]))$, it follows that
\[
\Gal(\Q(E[p])/\Q) = \Gal(\Q(P,Q)/\Q) \cong \rho_{E,p}(\Gal(\overline{\Q}/\Q))
\]
which is contained in split Cartan subgroup and thus abelian. 
\end{proof}

Note that we can see in (\cite{gonzlozano}, Table 1) a complete table showing which CM curves can have $\Q(E[n])$ abelian for which $n$.  We also have the following lemma to help understand the possible values of $n$.

\begin{lemma}\label{isogeny}
Let $K$ be a Galois extension of $\Q$, and let $E$ an elliptic curve over $\Q$.  If $E(K)_{\textup{tors}} \cong \Z/m\Z \times \Z/mn\Z$, then $E$ has an $n$-isogeny over $\mathbb{Q}$.
\end{lemma}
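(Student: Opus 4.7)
The plan is to produce an explicit $\Q$-rational cyclic subgroup of order $n$ inside $E$, from which the desired $n$-isogeny arises as the quotient. The natural candidate is the subgroup generated by $m$ times a point of maximal order in $E(K)_{\text{tors}}$.

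Concretely, choose generators $R$ and $P$ of $E(K)_{\text{tors}}\cong\Z/m\Z\oplus\Z/mn\Z$ with $R$ of order $m$ and $P$ of order $mn$, and set $C=\langle mP\rangle$, a cyclic subgroup of order $n$. I first check that $C$ is stable under the action of $\Gal(K/\Q)$. For any $\sigma\in\Gal(K/\Q)$, since $E(K)_{\text{tors}}$ is a $\Q$-rational subgroup it is preserved by $\sigma$, and thus we may write
\[
\sigma(P)=aR+bP
\]
for some integers $a,b$. Multiplying by $m$ kills the $\Z/m\Z$ factor, so
\[
\sigma(mP)=m\,\sigma(P)=maR+mbP=m b\, P\in C,
\]
showing $\sigma(C)\subseteq C$, hence $\sigma(C)=C$. (As a byproduct, since $\sigma(mP)$ has order $n$, the integer $b$ must be a unit modulo $n$, so $\sigma$ acts on $C$ as multiplication by $b$.)

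Next I upgrade this to stability under the full absolute Galois group. Because $K/\Q$ is Galois, every $\tau\in\Gal(\overline\Q/\Q)$ restricts to an element $\tau|_K\in\Gal(K/\Q)$, and for any point $Q\in C\subseteq E(K)$ one has $\tau(Q)=(\tau|_K)(Q)\in C$ by the previous step. Thus $C$ is a $\Q$-rational cyclic subgroup of $E$ of order $n$, and the quotient isogeny $\varphi\colon E\to E/C$ is a cyclic $n$-isogeny defined over $\Q$, as required.

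There is no real obstacle in this argument; the only point worth emphasizing is that the choice of generator $P$ of maximal order $mn$, together with the killing of the $\Z/m\Z$ summand under multiplication by $m$, is precisely what forces the Galois image of $mP$ back into $\langle mP\rangle$. The Galois hypothesis on $K/\Q$ is then exactly what is needed to promote $\Gal(K/\Q)$-stability to $\Gal(\overline\Q/\Q)$-rationality.
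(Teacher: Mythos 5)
Your proof is correct and follows essentially the same route as the paper: both identify $[m]E(K)_{\textup{tors}}=\langle mP\rangle$ as a cyclic subgroup of order $n$, use the Galois hypothesis on $K/\Q$ to see that $\Gal(\overline{\Q}/\Q)$ acts on $E(K)_{\textup{tors}}$ through $\Gal(K/\Q)$, and observe that multiplication by $m$ forces the Galois image of $mP$ back into $\langle mP\rangle$. If anything, your explicit computation $\sigma(mP)=maR+mbP=mbP$ is slightly more careful than the paper's phrasing of the same step.
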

\begin{proof}
Suppose $E(K)_{\text{tors}} \cong \Z/m\Z \times \Z/mn\Z = \langle P, Q \rangle$ where $P$ has order $m$ and $Q$ has order $mn$.  Then $[m]E(K)_{\text{tors}} = \langle mP, mQ \rangle = \langle mQ \rangle \cong \Z/n\Z$.  Let $\sigma \in \Gal(\overline{\Q}/\Q)$.  Since $K$ is Galois over $\Q$ and $E$ is defined over $\Q$, the action of Galois commutes with multiplication by $m$ and $n$. It follows that $(mQ)^{\sigma} \in E(K)[n] = \langle mQ \rangle$.  Thus, $\langle mQ \rangle$ is a cyclic subgroup of order $n$ that is stable under the action of $\Gal(\overline{\Q}/\Q)$, which implies $E$ has an $n$-isogeny over $\Q$. 
\end{proof}

Thus, the possible values for $m$ (up to a power of 2) and $n$ are controlled by the $\Q$-isogenies of the elliptic curve.

\section{Points of order $2^{n}$}

In order to understand $E(\Q_{ab})[2^{\infty}]$ and it's connection to isognies, we will use the database of Rouse and Zureick-Brown \cite{2adicimage}. First we prove a simple lemma concerning quadratic twists.

\begin{lemma}\label{lemma:twists}
Let $E/\Q$ be an elliptic curve, let $d$ be a square-free integer, and let $E_d$ denote the quadratic twist of $E$ by $d$. Then $E(\Q^{ab})_{\textup{tors}} \cong E_d (\Q^{ab})_{\textup{tors}}$.
\end{lemma}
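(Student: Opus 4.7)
The plan is to exploit the fact that quadratic twists become isomorphic over a quadratic extension, together with the observation that every quadratic extension of $\Q$ is abelian and therefore contained in $\Q^{ab}$.

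First, I would recall the standard fact that if $E_d$ is the quadratic twist of $E$ by the square-free integer $d$, then there exists an isomorphism $\varphi : E \to E_d$ defined over $\Q(\sqrt{d})$. (Concretely, if $E$ is given by $y^2 = x^3 + ax + b$, then $E_d$ is $y^2 = x^3 + d^2 a x + d^3 b$, and the map $(x,y) \mapsto (dx, d^{3/2} y)$ is defined over $\Q(\sqrt{d})$.)

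Second, by Kronecker--Weber (or more elementarily, because $\Gal(\Q(\sqrt{d})/\Q)$ has order at most $2$ and is therefore abelian), the field $\Q(\sqrt{d})$ is contained in $\Q^{ab}$. Hence the isomorphism $\varphi$ is defined over $\Q^{ab}$, and it induces a group isomorphism on $\Q^{ab}$-points
\[
\varphi : E(\Q^{ab}) \xrightarrow{\sim} E_d(\Q^{ab}).
\]
Restricting to torsion subgroups yields the claimed isomorphism $E(\Q^{ab})_{\text{tors}} \cong E_d(\Q^{ab})_{\text{tors}}$.

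There is no real obstacle here; the lemma is essentially a bookkeeping observation that quadratic twisting is invisible once one passes to a field containing all quadratic extensions of $\Q$. The only point worth being careful about is that $\varphi$, although defined over a possibly larger field than $\Q$, is genuinely a morphism of elliptic curves over $\Q^{ab}$, so it automatically respects the group structure and in particular preserves orders of points.
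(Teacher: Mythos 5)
Your proof is correct and follows exactly the same route as the paper's one-line argument: the twist isomorphism is defined over $\Q(\sqrt{d}) \subseteq \Q^{ab}$, so it identifies $E(\Q^{ab})$ with $E_d(\Q^{ab})$ and hence their torsion subgroups. You have simply written out the details the paper leaves implicit.
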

\begin{proof}
Since $E$ and $E_d$ become isomorphic over $\Q(\sqrt{d})$ and $\Q(\sqrt{d}) \subseteq \Q^{ab}$ for any $d$, the lemma follows immediately.
\end{proof}

Note that the minimal field of definition of the torsion for $E$ and $E_d$ may differ, but by the previous lemma their torsion over $\Q^{ab}$ will always be isomorphic. In particular, when examining elliptic curves with $j$-invariant not equal to 0 or 1728, it suffices to fix a specific curve $E$, and examine $E(\Q^{ab})_{\text{tors}}$.

The following lemma gives a criterion for a point to be halved which will be handy to explicitly compute $\Q(E[2^k])$ for various $k$:

\begin{lemma}[Knapp \cite{knapp}, Theorem 4.2, p. 85]\label{halvingpoint}
Let $K$ be a field of characteristic not equal to 2 or 3, and let $E$ be an elliptic curve over $K$ given by $y^2 = (x-\alpha)(x-\beta)(x-\gamma)$ with $\alpha, \beta, \gamma$ in $K$.  For $P=(x,y) \in E(K)$, there exists a $K$-rational point $Q = (x',y')$ on $E$ such that $[2]Q = P$ if and only if $x- \alpha, x-\beta,$ and $x-\gamma$ are all squares in $K$.  In this case, if we fix the sign of $\sqrt{x-\alpha}, \sqrt{x-\beta},$ and $\sqrt{x-\gamma}$, then $x'$ equals one of the following:
$$\sqrt{x-\alpha}\sqrt{x-\beta} \pm \sqrt{x-\alpha}\sqrt{x-\gamma} \pm \sqrt{x-\beta}\sqrt{x-\gamma} + x$$
or
$$-\sqrt{x-\alpha}\sqrt{x-\beta} \pm \sqrt{x-\alpha}\sqrt{x-\gamma} \mp \sqrt{x-\beta}\sqrt{x-\gamma} + x$$
where the signs are taken simultaneously.
\end{lemma}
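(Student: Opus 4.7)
The plan is to prove the ``only if'' direction via the tangent-line polynomial identity at the halving point, and the ``if'' direction by exhibiting $Q$ explicitly in terms of $u := \sqrt{x-\alpha}$, $v := \sqrt{x-\beta}$, $w := \sqrt{x-\gamma}$. Throughout, write $f(X) = (X-\alpha)(X-\beta)(X-\gamma)$ so that $E : y^2 = f(X)$.

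For the forward direction, suppose $Q = (x', y') \in E(K)$ satisfies $[2]Q = P$, and let $\lambda = f'(x')/(2y')$ be the tangent slope at $Q$. Since the tangent line at $Q$ meets $E$ doubly at $Q$ and simply at $-P = (x, -y)$, comparing the cubic $f(X)$ to the square of the tangent line yields the identity
$$f(X) - \bigl(\lambda(X-x') + y'\bigr)^2 \;=\; (X-x')^2(X-x).$$
Evaluating at $X = \alpha$, where $f(\alpha) = 0$, gives
$$x - \alpha \;=\; \left(\frac{\lambda(\alpha - x') + y'}{x' - \alpha}\right)^{2} \in K^2,$$
and the symmetric evaluations at $\beta$ and $\gamma$ complete this direction.

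For the converse, assume $u, v, w \in K$ with signs fixed so that $uvw = y$. Set
$$x' := x + uv + uw + vw, \qquad y' := (u+v)(u+w)(v+w),$$
and verify by direct expansion, using $u^2 = x - \alpha$ etc., the key factorizations
$$x' - \alpha = (u+v)(u+w), \quad x' - \beta = (u+v)(v+w), \quad x' - \gamma = (u+w)(v+w).$$
These immediately give $y'^2 = f(x')$, so $(x', y') \in E(K)$. A further computation produces $f'(x') = 2(u+v+w)\,y'$, hence the tangent slope at $(x', y')$ is simply $\lambda = u+v+w$. The duplication formula $x([2]Q) = \lambda^2 - 2x' + (\alpha+\beta+\gamma)$ then collapses to $x$ after expanding $\lambda^2 = u^2 + v^2 + w^2 + 2(uv + uw + vw)$, and the sign of $y'$ is chosen to match $y$.

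The main obstacle, and the reason the statement lists so many candidate expressions, is bookkeeping the four preimages of $P$ under $[2]$, namely $Q$ and $Q + T_i$ for the three nontrivial 2-torsion points $T_i = (\alpha_i, 0)$. A short group-law calculation shows that the $x$-coordinates of these four points are obtained from $x + uv + uw + vw$ by flipping the sign of $u$, $v$, or $w$ individually, yielding
$$x + uv + uw + vw, \quad x + uv - uw - vw, \quad x - uv + uw - vw, \quad x - uv - uw + vw,$$
which match the simultaneous-sign patterns in the two displayed formulas of the lemma. The consistency check is that flipping all three signs at once returns the original $x'$, reflecting $T_1 + T_2 + T_3 = O$.
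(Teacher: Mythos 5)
This lemma is quoted in the paper directly from Knapp's book (Theorem 4.2, p.~85) and the paper gives no proof of it, so there is nothing internal to compare against; judged on its own, your argument is correct and is the standard proof of this descent criterion. Both halves check out: the identity $f(X)-(\lambda(X-x')+y')^2=(X-x')^2(X-x)$ evaluated at the roots of $f$ gives the ``only if'' direction (note $x'\neq\alpha,\beta,\gamma$ since $[2]Q=P\neq O$ forces $y'\neq 0$), and the factorizations $x'-\alpha=(u+v)(u+w)$ etc.\ together with $\lambda=u+v+w$ and the duplication formula give the converse. The only step you leave compressed is the exhaustiveness of the list of $x'$-values; the cleanest way to finish it is to observe that each of the four sign patterns $(\pm u,\pm v,\pm w)$ with product $y$ produces a point doubling to $P$, that these four points are pairwise distinct (their $x$-coordinates differ by quantities like $2u(v+w)$, and $v+w=0$ would force $\beta=\gamma$; in the degenerate case $u=0$ the coinciding $x$-coordinates carry opposite nonzero $y$-coordinates), and hence they exhaust the coset $Q+E[2]$ of preimages of $P$ — which yields exactly the four displayed expressions, since flipping a single sign gives the same $x'$ as flipping the complementary pair.
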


In particular, we can prove an nice criterion for an elliptic curve to have a point of order 4 over $\Q^{ab}$, but not full 4-torsion. We will make use of the following proposition describing the Galois group of various degree 4 polynomials.

\begin{proposition}[\cite{conrad} Corollary 4.5]\label{galgrouptype}
Let $f(X) = X^4 + bX^2 + d$ be irreducible in $K[X]$, where $K$ does not have characteristic 2.  Its Galois group over $K$, denoted $G_f$, is $V,\ \Z/4\Z, \text{ or } D_{4}$ according to the following conditions.

\begin{enumerate}

\item If $d \in \left( K^{\times} \right)^2$ then $G_{f} = \Z/2\Z \times \Z/2\Z$.

\item If $d \not\in \left( K^{\times} \right)^2$ and $(b^2 - 4d)d\in \left( K^{\times} \right)^2$ then $G_f = \Z/4\Z$.

\item If $d \not\in \left( K^{\times} \right)^2$ and $(b^2 - 4d)d \not\in \left( K^{\times} \right)^2$ then $G_f = D_4$.

\end{enumerate}

\end{proposition}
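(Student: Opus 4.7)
The plan is to work with the splitting field $L$ of $f$ over $K$, restrict the candidate Galois groups using the biquadratic structure of $f$, and then separate them using the discriminant together with a short subfield analysis. To set up, I would let $r_1, r_2 \in \overline{K}$ denote the two roots of the resolvent quadratic $Y^2 + bY + d$, so that $r_1 r_2 = d$ and $(r_1 - r_2)^2 = b^2 - 4d$. The roots of $f$ are then $\pm\sqrt{r_1}, \pm\sqrt{r_2}$, with $\sqrt{r_1}\,\sqrt{r_2} = \pm\sqrt{d}$, and hence
\[
L \;=\; K(\sqrt{r_1}, \sqrt{r_2}) \;=\; K(\sqrt{r_1}, \sqrt{d}).
\]
Irreducibility of $f$ forces $b^2 - 4d \notin (K^\times)^2$, for otherwise $r_1, r_2 \in K$ and $f$ would split as $(X^2-r_1)(X^2-r_2)$. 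Thus $[K(\sqrt{r_1}):K] = 4$ and $K(\sqrt{b^2-4d})$ is a quadratic subfield of $K(\sqrt{r_1})$.

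Next I would restrict the candidate Galois groups. Since $f$ is biquadratic, $G_f$ preserves the partition $\{\{\pm\sqrt{r_1}\}, \{\pm\sqrt{r_2}\}\}$ of the roots, so it embeds into the wreath product $\Z/2\Z \wr S_2 \cong D_4 \subseteq S_4$. Irreducibility makes $G_f$ transitive, and the only transitive subgroups of this $D_4$ are $V$, $\Z/4\Z$, and $D_4$ itself. A direct calculation gives $\operatorname{disc}(f) = 16\,d\,(b^2-4d)^2$, which lies in $(K^\times)^2$ if and only if $d$ does. Since $G_f \subseteq A_4$ if and only if $\operatorname{disc}(f) \in (K^\times)^2$, and only $V$ among the three candidates sits inside $A_4$, this yields $G_f \cong V$ exactly in case (1).

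To handle the remaining cases I would compute $[L:K]$, which equals $4$ when $G_f \cong \Z/4\Z$ and $8$ when $G_f \cong D_4$. Because $L = K(\sqrt{r_1}, \sqrt{d})$, this hinges on whether $\sqrt{d} \in K(\sqrt{r_1})$. In case (2) the hypothesis $(b^2-4d)d \in (K^\times)^2$ gives $K(\sqrt{d}) = K(\sqrt{b^2-4d}) \subseteq K(\sqrt{r_1})$, so $L = K(\sqrt{r_1})$ has degree $4$ and $G_f \cong \Z/4\Z$. In case (3) I would argue by contradiction: if $\sqrt{d} \in K(\sqrt{r_1})$, then because $d(b^2-4d) \notin (K^\times)^2$ the fields $K(\sqrt{d})$ and $K(\sqrt{b^2-4d})$ would be distinct quadratic subfields of $K(\sqrt{r_1})$; but any degree-$4$ extension containing two distinct quadratic subfields equals their compositum and is therefore Galois of type $V$ over $K$, which would force $L = K(\sqrt{r_1})$ and $G_f \cong V$, contradicting (via the discriminant) that $d \notin (K^\times)^2$. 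Hence $[L:K] = 8$ and $G_f \cong D_4$.

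The main delicate point is the subfield argument in case (3)---the passage from ``two distinct quadratic subfields of a degree-$4$ extension'' to ``that extension is Galois of type $V$''. Everything else is bookkeeping on field degrees and a direct discriminant computation.
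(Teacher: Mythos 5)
Your proof is correct. Note that the paper does not prove this proposition at all --- it is quoted verbatim from Conrad's expository notes (Corollary 4.5 of \cite{conrad}) --- so there is no in-paper argument to compare against; your write-up is essentially the standard proof that Conrad himself gives: the biquadratic structure confines $G_f$ to a copy of $D_4$ whose only transitive subgroups are $V$, $\Z/4\Z$, and $D_4$; the discriminant $16d(b^2-4d)^2$ isolates the $V$ case via the $A_4$ criterion; and the degree of the splitting field $K(\sqrt{r_1},\sqrt{d})$, controlled by whether $\sqrt{d}$ lies in $K(\sqrt{r_1})$, separates $\Z/4\Z$ from $D_4$. All the small verifications needed (irreducibility forces $d\neq 0$ and $b^2-4d$ a nonsquare, hence $f$ separable with nonzero discriminant; a quartic field with two distinct quadratic subfields is their compositum and hence of type $V$) go through as you indicate.
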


We combine this result with Lemma \ref{halvingpoint} to prove the following lemma:

\begin{lemma}\label{z2xz4xzn}

Suppose $E$ is an elliptic curve over $\Q$ with a point of order 4 defined over $\Q^{ab}$ but not full 4-torsion defined over $\Q^{ab}$.  Then, there is a model of $E$ of the form:
\[
E: y^2 = x(x^2 + bx + d)
\]
and either $d$ or $(b^2-4d)d$ is a non-zero perfect square in $\Q$.

\end{lemma}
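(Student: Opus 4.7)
My approach is to translate the existence of a point of order $4$ in $E(\Q^{ab})$ into an abelianness condition on the splitting field of a quartic polynomial of the form $X^4 \pm bX^2 + d$, and then invoke Proposition \ref{galgrouptype}.

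First, I would show that $E$ has a $\Q$-rational $2$-torsion point under the hypotheses. Writing $E(\Q^{ab})_{\textup{tors}} \cong \Z/m\Z \times \Z/mn\Z$, having a point of order $4$ forces $4 \mid mn$, while the absence of full $4$-torsion forces $v_2(m) \leq 1$, so $v_2(n) \geq 1$. Since $\Q^{ab}/\Q$ is Galois, Lemma \ref{isogeny} provides an $n$-isogeny, and in particular a $2$-isogeny, over $\Q$. This yields a $\Q$-rational $2$-torsion point, and translating it to the origin produces the model $E : y^2 = x(x^2 + bx + d)$ with $b, d \in \Q$. I would additionally observe that $b^2 - 4d$ cannot be a $\Q$-square: otherwise $E$ would have full $2$-torsion over $\Q$, and Lemma \ref{halvingpoint} (together with the fact that every quadratic extension of $\Q$ lies in $\Q^{ab}$) would make every nontrivial $2$-torsion point halvable in $\Q^{ab}$, giving $E[4] \subseteq E(\Q^{ab})$ and contradicting the hypothesis.

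Let $r_1, r_2$ be the (now necessarily irrational) roots of $x^2 + bx + d$, so the nontrivial $2$-torsion is $\{T_0 = (0,0),\, T_1 = (r_1, 0),\, T_2 = (r_2, 0)\}$. Any point of order $4$ in $E(\Q^{ab})$ halves one of these. By Lemma \ref{halvingpoint}, halving $T_0$ in $\Q^{ab}$ requires $\sqrt{-r_1}, \sqrt{-r_2} \in \Q^{ab}$, while halving $T_1$ or $T_2$ forces $\sqrt{r_1}, \sqrt{r_2} \in \Q^{ab}$ (the second square root following from the first by Galois conjugation, since $\Q^{ab}/\Q$ is Galois). The field $\Q(\sqrt{-r_1}, \sqrt{-r_2})$ is the splitting field over $\Q$ of $X^4 - bX^2 + d$, while $\Q(\sqrt{r_1}, \sqrt{r_2})$ is that of $X^4 + bX^2 + d$; in either case this splitting field is contained in $\Q^{ab}$ and thus abelian over $\Q$.

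Finally, I would apply Proposition \ref{galgrouptype} to the relevant quartic. Since the condition ``$d$ or $(b^2 - 4d)d$ is a $\Q$-square'' is invariant under $b \mapsto -b$, both sub-cases yield the same conclusion when the polynomial is irreducible. If instead the polynomial is reducible, then because $b^2 - 4d \notin (\Q^\times)^2$ rules out both a rational root and a factorization into two quadratics with zero $X$-coefficient, the only remaining factorization is of the form $(X^2 + pX + q)(X^2 - pX + q)$ with $p \neq 0$; comparing constant terms yields $q^2 = d$, so $d \in (\Q^\times)^2$, again satisfying the conclusion. The main obstacle I anticipate is the bookkeeping in the second step --- tracking which $2$-torsion point is being halved and confirming that both of the resulting quartic polynomials lead to the same symmetric criterion via Proposition \ref{galgrouptype}.
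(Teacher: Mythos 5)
Your proof is correct, and its skeleton is the same as the paper's: get a rational $2$-torsion point, apply Lemma \ref{halvingpoint} to see that a quartic $X^4 \pm bX^2 + d$ must have abelian splitting field, and feed that into Proposition \ref{galgrouptype}. The differences are in execution, and two of them are worth noting. First, where the paper handles the case ``$Q$ halves $(\alpha,0)$'' by a separate argument showing that this would force full $4$-torsion over $\Q^{ab}$ (a contradiction, so only the $(0,0)$ case survives), you treat both cases symmetrically, observing that $X^4+bX^2+d$ and $X^4-bX^2+d$ yield the same criterion in Proposition \ref{galgrouptype} because $(\pm b)^2 - 4d$ is the same. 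This is cleaner for proving the lemma as stated, though you lose the extra fact (used later in the paper, in the $\Z/2\Z\times\Z/24\Z$ exclusion) that the order-$4$ point must halve $(0,0)$. Second, your proof is more careful in two places where the paper is terse: you derive the rational $2$-torsion point directly from Lemma \ref{isogeny} rather than from Lemma \ref{2powertorsion} (avoiding the forward reference to Table \ref{table:RDZB}), and you explicitly dispose of the case where the quartic is reducible --- Proposition \ref{galgrouptype} assumes irreducibility, and your observation that $b^2-4d\notin(\Q^\times)^2$ forces any rational factorization to be of the form $(X^2+pX+q)(X^2-pX+q)$ with $q^2=d$ closes a gap the paper leaves implicit.
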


\begin{proof}

By Lemma \ref{2powertorsion} if $E$ has a point of order 4 defined over $\Q^{ab}$ then $E$ has at least one 2-isogeny over $\Q$. Thus there exists a point $P$ of order 2 defined over $\Q$, and by moving that point to $P = (0,0)$ we obtain a model for $E$ of the form $y^2 = x(x^2 + bx + d)$ with $b,d \in \mathbb{Q}$.

Over $\Q^{ab}$ we obtain a point of order 4, say $Q$.  Suppose first that this point satisfies $2Q = P = (0,0)$.  Writing $E : y^2 = x(x-\alpha)(x-\overline{\alpha})$, Lemma $\ref{halvingpoint}$ says that we have that $\sqrt{-\alpha}$ and $\sqrt{-\overline{\alpha}}$ must be squares in $\Q^{ab}$.  Notice that the minimal polynomial for both $\sqrt{-\alpha}$ and $\sqrt{-\overline{\alpha}}$ is in fact the polynomial
\[
f = x^4 - bx^2 + d.
\]
Thus, we must have that the Galois group of $f$ over $\Q$ is abelian.  Therefore $G_f = \Z/2\Z \times \Z/2\Z \text{ or } \Z/4\Z$.  Now by Proposition $\ref{galgrouptype}$ the two cases above follow.

Suppose instead that our point of order 4 is halving one of the other points of order 2.  Without loss of generality we may assume $2Q = (\alpha,0)$.  By Lemma \ref{halvingpoint} we have that $\alpha$ and $\alpha- \overline{\alpha}$ are squares in $\Q^{ab}$.  Notice that the minimal polynomial of $\sqrt{\alpha}$ is
\[
f = x^4 + bx^2 + d
\]
and the minimal polynomial of $\sqrt{\alpha - \overline{\alpha}}$ is
\[
g = x^4 - (b^2-4d).
\]
Note that $b^2-4d$ is not a square since $\alpha-\overline{\alpha} = \sqrt{b^2-4d}$ was was assumed to not be in $\Q$.

Let $L_1 = \Q(\sqrt{\alpha})$ and $L_2 = \Q(\sqrt{\alpha - \overline{\alpha}})$.  We wish to consider when the composite $L_1 L_2 \subseteq \Q^{ab}$.  For $L_2 \subseteq \Q^{ab}$ we must have $\Gal(L_2/\Q) \cong \Z/2\Z \times \Z/2\Z \mbox{ or } \Z/4\Z$. Applying Proposition \ref{galgrouptype} to the defining polynomial $g$ of $L_2$ gives us conditions for when this happens.

$$\Gal(L_2/\Q) \cong
\begin{cases}
\Z/2\Z \times \Z/2\Z &\mbox{ if } -(b^2-4d) \in \left(\Q^{\times}\right)^2 \\
\Z/4\Z &\mbox{ if } -(b^2-4d) \not\in \left(\Q^{\times}\right)^2 \mbox{ and } -4(b^2-4d)^2 \in \left(\Q^{\times}\right)^2.
\end{cases}
$$
Notice that the second case $\Gal(L_2/\Q) \cong \Z/4\Z$ cannot occur, since if $-4(b^2-4d)^2 \in \left(\Q^{\times}\right)^2$ then $-1 \in \left(\Q^{\times}\right)^2$.

Thus, suppose instead that $\Gal(L_2/\Q) \cong \Z/2\Z \times \Z/2\Z$. Then $-(b^2-4d) = s^2$ for some $s \in \Q$.  Then $\alpha - \overline{\alpha}= \sqrt{b^2 - 4d} = \sqrt{-s^2} = |s|\sqrt{-1}$.  Therefore, since $\alpha - \overline{\alpha} \in L_2$, we have that $\sqrt{-1} \in L_2$.  Recall that $(0,0)$ is halved in the extension where $x^4 - bx^2 + d$ splits.  By assumption, $x^4 + bx^2 + d$ is split in $L_1$, and thus it splits in $L_1 L_2$. Further, we have shown that $\sqrt{-1} \in L_2 \subseteq L_1 L_2$, and therefore $x^4 - bx^2 + d$ must also split.  Thus the point $(0,0)$ is also halved in $L_1 L_2$, and so full 4-torsion is defined over $L_1 L_2$.  Thus, if $L_1 L_2 \subseteq \Q^{ab}$, we contradict our original assumption that $E$ does not have full 4-torsion over $\Q^{ab}$.

\end{proof}

We now prove our proposition relating 2-powered torsion to the isogenies of an elliptic curve.

\begin{proposition}\label{R-DZB}
Let $E/\Q$ be an elliptic curve. Table \ref{table:RDZB} gives the possibilities for $E(\Q^{ab})[2^{\infty}]$, the $2$-powered isogenies attached to each case, and also $C_2(E)$.
\end{proposition}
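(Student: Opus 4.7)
The plan is to perform a case analysis driven by the Rouse--Zureick-Brown classification of 2-adic images of Galois representations. For any elliptic curve $E/\Q$, let $H := \rho_{E,2^\infty}(\GQ) \subseteq \GL(2,\Z_2)$ denote the image of the 2-adic Galois representation. The key observation is that a point $P \in E[2^\infty]$ is defined over $\Q^{ab}$ if and only if the Galois action on the $\GQ$-orbit of $P$ factors through an abelian quotient, which happens precisely when $[H,H]$ fixes $P$. Hence
\[
E(\Q^{ab})[2^\infty] = E[2^\infty]^{[H,H]}.
\]
Similarly, each $\Q$-rational cyclic $2$-power isogeny from $E$ corresponds to an $H$-stable cyclic subgroup of $E[2^\infty]$, so $C_2(E)$ is simply the count of such subgroups.

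First, I would invoke Lemma \ref{lemma:twists} to reduce to a single representative per $2$-adic image class, since quadratic twisting preserves $E(\Q^{ab})_{\textup{tors}}$. Next, I would enumerate the finitely many RZB subgroups $H$ that actually arise as $\rho_{E,2^\infty}(\GQ)$ for some elliptic curve $E/\Q$. For each such $H$, the computation reduces to a finite-group calculation: determine $[H,H]$, compute $E[2^n]^{[H,H]}$ for $n$ large enough that the answer stabilizes, and count the $H$-stable cyclic $2$-power subgroups. Many of these images produce the same triple of invariants $(E(\Q^{ab})[2^\infty], \text{isogeny data}, C_2(E))$, collapsing the enumeration into the finitely many rows of Table \ref{table:RDZB}.

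Third, I would cross-check each candidate row against the isogeny-theoretic tools already developed. By Lemma \ref{isogeny}, if $E(\Q^{ab})_{\textup{tors}}$ contains $\Z/m\Z \oplus \Z/mn\Z$ with $m,n$ powers of $2$, then $E$ admits an $n$-isogeny over $\Q$; this both excludes spurious entries and pins down the isogeny column of the table. When a candidate has a point of order $4$ but not full $4$-torsion over $\Q^{ab}$, Lemma \ref{z2xz4xzn} gives an explicit Weierstrass-style criterion on $(b,d)$ that must match the corresponding universal curve in the RZB database, providing an independent verification.

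The main obstacle is the bookkeeping: one must systematically enumerate the relevant RZB subgroups, compute the commutator subgroup and its fixed points on $E[2^\infty]$, and correctly tally the $H$-stable cyclic subgroups for each. This is mechanical and implementable in a computer algebra system (e.g.\ Magma or Sage), but attention is required to ensure no image class is overlooked and that the equivalence between RZB images and rows of Table \ref{table:RDZB} is complete in both directions. Once carried out, each entry of Table \ref{table:RDZB} is directly verified.
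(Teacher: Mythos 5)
Your treatment of the non-CM case is essentially the paper's argument: pass to the image $G$ of the 2-adic (in the paper, mod-32) representation from the Rouse--Zureick-Brown classification, compute the commutator subgroup $[G,G]$, and read off $E(\Q^{ab})[2^{\infty}]$ as the $[G,G]$-fixed points, with $C_2(E)$ counted from the $G$-stable cyclic subgroups. (One small point: for this part you do not need Lemma \ref{lemma:twists} at all, since the computation is purely group-theoretic; the twist reduction is needed elsewhere.)

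The genuine gap is that the RZB database classifies 2-adic images only for elliptic curves \emph{without} complex multiplication, so your enumeration of ``the finitely many RZB subgroups that arise'' silently omits all CM curves, whose 2-adic images lie in (normalizers of) Cartan subgroups and are not among the RZB groups. The paper handles CM curves by a separate argument: for $j \neq 0, 1728$ there are finitely many quadratic twist families, so Lemma \ref{lemma:twists} reduces to finitely many explicit curves, for which one bounds $E(\Q^{ab})[2^{\infty}]$ using Table 1 of \cite{gonzlozano} together with Lemma \ref{isogeny}, and then computes directly with division polynomials. The cases $j=0$ and $j=1728$ need still more care, because sextic and quartic twists mean the curves with these $j$-invariants do \emph{not} fall into finitely many quadratic twist families; the paper isolates the finitely many subfamilies that are single quadratic twist families and disposes of the remaining curves using \cite{gonzlozano} Table 1 and, for $j=1728$, Lemma \ref{z2xz4xzn} to exclude a point of order $4$ over $\Q^{ab}$ when $s \neq \pm t^2$. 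Without some version of this CM analysis your proof of the completeness of Table \ref{table:RDZB} does not go through.
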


\begin{table}[!ht]
\begin{tabular}{|c|c|c|} \hline
$E(\Q^{ab})[2^{\infty}]$ & Isogeny Degrees & $C_{2}(E)$\\ \hline
$\{ \mathcal{O} \}$ & 1 & 1 \\ \hline
\multirow{2}{*}{$\Z/2\Z \times \Z/2\Z$} & 1 & 1 \\
	& 1, 2 & 2\\ \hline
\multirow{2}{*}{$\Z/2\Z \times \Z/4\Z$} & 1, 2 & 2\\
	& 1, 2, 4, 4 & 4\\ \hline
\multirow{2}{*}{$\Z/2\Z \times \Z/8\Z$} & 1, 2, 4, 4 & 4\\
	& 1, 2, 4, 4, 8, 8 & 6\\ \hline
\multirow{2}{*}{$\Z/4\Z \times \Z/4\Z$} & 1, 2, 2, 2 & 4\\
	& 1, 2, 4, 4 & 4\\ \hline
$\Z/2\Z \times \Z/16\Z$ & 1, 2, 4, 4, 8, 8, 16, 16 & 8\\ \hline 
\multirow{2}{*}{$\Z/4\Z \times \Z/8\Z$} & 1, 2, 2, 2, 4, 4 & 6\\
	& 1, 2, 4, 4, 8, 8, 8, 8 & 8\\ \hline 
$\Z/4\Z \times \Z/16\Z$ & 1, 2, 2, 2, 4, 4, 8, 8 & 8\\ \hline
$\Z/8\Z \times \Z/8\Z$ & 1, 2, 2, 2, 4, 4, 4, 4 & 8\\ \hline
\end{tabular}
\caption{Possible 2-primary torsion over $\Q^{ab}$}\label{table:RDZB}
\end{table}

\begin{proof}
If $E$ does not have CM, it must be in one of the families given in the Rouse, Zureick-Brown database \cite{2adicimage}.  We compute for each family the 2-powered torsion over $\Q^{ab}$.  We do this as follows: for each family let $G$ be the image of $\rho_{E,32}$, that is $G = \rho_{E,32}(\Gal(\overline{\Q}/\Q))$.  In fact,
$$G = \rho_{E,32}(\Gal(\overline{\Q}/\Q)) \cong\Gal(\Q(E[32])/\Q)$$
since $\ker \rho_{E,32} = \Gal(\overline{\Q}/\Q(E[32]))$.  Then the commutator subgroup $[G,G]$ has fixed field equal to $\Q(E[32])\cap \Q^{ab}$. We fix a $\Z/32\Z$-basis $\{P,Q\}$ of $E[32]$ and identify $G$ with a subgroup of $\GL(2,32)$. We then compute the vectors fixed in $\left(\Z/32\Z\right)^2$ by $[G,G]$ which gives the structure of the points on $E$ defined over $\Q(E[32])\cap \Q^{ab}$, that is the structure of $E(\Q^{ab})[32]$. Here, a vector $[a,b] \in \left(\Z/32\Z\right)^2$ corresponds to a point $aP+bQ \in E[32]$. Since the largest order point found in $E(\Q^{ab})[32]$ has order 16, we see that $E(\Q^{ab})[2^{\infty}] = E(\Q^{ab})[16]$.

For elliptic curves with CM we examine the finitely many $j$-invariants over $\Q$ (see \cite{gonzlozano} Table 1). The table is broken up into quadratic twist families by $j$-invariant. For $j\neq 0,1728$, there are only finitely many quadratic twist families, and so by Lemma \ref{lemma:twists} it suffices to fix a single curve within each family and examine its torsion over $\Q^{ab}$. 

Let $E$ be such a curve. From that table we can see the largest $m$ such that $\Q(E[2^m])$ is abelian, as well as the isogenies each $\Q$-isomorphism class has.  Let $2^n$ denote the largest degree 2-powered isogeny $E$ has. Then from Lemma \ref{isogeny} it follows that $E(\Q^{ab})[2^{\infty}] \subseteq \Z/2^m\Z \times \Z/2^{n+m}\Z$.

Thus, to find the structure of $E(\Q^{ab})[2^{\infty}]$ it simply remains to find the largest 2-powered torsion $E$ has over some abelian number field, up to $2^{n+m}$. We do this by using the division polynomial method. For each $0 < k \leq n+m$ we use Magma \cite{magma} to compute the $(2^k)^{\text{th}}$-division polynomial of $E$, whose roots are the $x$-coordinates of the points of order $2^k$ on $E$. From the $x$-coordinates, we can compute the corresponding $y$-coordinates, and get a list of all points of order $2^k$ on $E$. Now we simply compute the field of definition of these points and check whether each field is abelian or not. If $k_0$ is the first value where no points of order $2^{k_0}$ are defined over an abelian extension, then $E(\Q^{ab})[2^{\infty}] \cong \Z/2^m \times \Z/2^{k_0-1}\Z$.

We run through each quadratic twist family. Once computed we find that we do not gain any new groups nor do we add any new 2-powered isogeny degree combinations to the list originally found for non-CM curves. Note that the code used to do these computations is available on the author's website.

For $j=0$, the cases $y^2=x^3+t^3$, and $y^2=x^3+16t^3$ where $t \in \Q$ are single quadratic twist families, so may be treated as above. For the case $y^2=x^3+s$ with $s \neq t^3, 16t^3$ \cite{gonzlozano} Table 1 already shows that $E(\Q^{ab})[2] = \{ \mathcal{O} \}$ for any of these quadratic twist families.

Similarly for $j=1728$, the cases $y^2=x^3 \pm t^2x$ for $t \in \Q$ are in two separate quadratic twist families, so may be treated as before. Finally, we consider the case $E: y^2= x^3 + sx$, with $s \neq \pm t^2$ for any $t \in \Q$. We see from \cite{gonzlozano} Table 1 that the largest division field that is abelian is $\Q(E[2])$. Suppose that $E$ has a point of order 4 over $\Q^{ab}$, then Lemma \ref{z2xz4xzn} gives that $s$ is a square in $\Q$ or $-4s^2$ is a square in $\Q$, contradicting our assumption that $s \neq \pm t^2$ for any $t \in \Q$. Thus, $E(\Q^{ab})[2^{\infty}] \equiv \Z/2\Z \times \Z/2\Z$. Thus the table above is complete.

\end{proof}

From Table \ref{table:RDZB} we can make a simple observation:

\begin{lemma}\label{2powertorsion}
Let $E/\Q$ be an elliptic curve and suppose that $E(\Q^{ab})[2^{\infty}] \not\cong \{\mathcal{O} \} \text{ or } \Z/2\Z \times \Z/2\Z$.  Then $E$ has at least one $2$-isogeny over $\Q$, that is, $C_{2}(E) \geq 2$.
\end{lemma}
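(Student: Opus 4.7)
The plan is to deduce this lemma directly from Proposition \ref{R-DZB}. The hypothesis that $E(\Q^{ab})[2^{\infty}]$ is neither $\{\mathcal{O}\}$ nor $\Z/2\Z \times \Z/2\Z$ excludes precisely the top two rows of Table \ref{table:RDZB}. In every remaining row, the listed sequence of isogeny degrees contains a $2$, and correspondingly the value recorded in the rightmost $C_2(E)$ column is at least $2$ (in fact at least $4$ in each of those rows). So inspecting the table row by row, I would immediately conclude that under the given hypothesis $E$ admits at least one nontrivial $\Q$-rational cyclic $2$-power subgroup, equivalently $C_2(E) \geq 2$.

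The only point worth flagging is the reason the case $\Z/2\Z \times \Z/2\Z$ must be explicitly excluded from the hypothesis: Table \ref{table:RDZB} shows that this torsion structure does occur in tandem with $C_2(E) = 1$. Concretely, this happens when none of the three nontrivial points of order $2$ is individually $\Q$-rational, yet the full $2$-torsion subgroup $E[2]$, which is not cyclic, is collectively defined over some small abelian extension (for instance the splitting field of the cubic defining the $x$-coordinates of the $2$-torsion). For every torsion group strictly larger than $\Z/2\Z \times \Z/2\Z$ appearing in the table, at least one cyclic $\Q$-rational subgroup of order $2$ is forced to exist, which is exactly the content of the lemma.

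I do not anticipate any genuine obstacle, since the classification work has already been absorbed into Proposition \ref{R-DZB}; this lemma is just a clean corollary extracted from one column of Table \ref{table:RDZB} that will be convenient for later use in the paper.
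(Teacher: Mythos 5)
Your argument is correct and is exactly the paper's: the lemma is presented there as a direct observation from Table \ref{table:RDZB}, whose only rows with $C_2(E)=1$ are the two cases excluded by the hypothesis. One small inaccuracy to fix: your parenthetical claim that $C_2(E)\geq 4$ in every remaining row is false, since the $\Z/2\Z\times\Z/4\Z$ case admits $C_2(E)=2$ (isogeny degrees $1,2$), though this does not affect the conclusion $C_2(E)\geq 2$.
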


\section{Bounding Torsion}\label{sec:bounding}

We begin with a proposition that bounds $E(\Q^{ab})[p^{\infty}]$ for all primes $p$.

\begin{proposition}\label{pupperbound}
Let $E/\Q$ be an elliptic curve, and $\Q^{ab}$ be the maximal abelian extension of $\Q$.  Then the following table gives a bound on $E(\Q^{ab})[p^{\infty}]$ for all primes $p$, i.e., the $p$-power torsion is contained in the following subgroups:
\begin{center}
\begin{tabular}{|c||c|c|c|c|c|} \hline
$p$ & $2$ & $3$ & $5$ & $7$, $11$, $13$, $17$, $19$,& \emph{else}\\
 & & & & $37$, $43$, $67$, $163$, & \\ \hline
\multirow{2}{*}{$E(\Q^{ab})[p^{\infty}] \subseteq$} &
$\Z/4\Z \times \Z/16\Z$ & \multirow{2}{*}{$\Z/3\Z \times \Z/27\Z$} & \multirow{2}{*}{$\Z/5\Z \times \Z/25\Z$} & \multirow{2}{*}{$\Z/p\Z$} & \multirow{2}{*}{$\{\mathcal{O}\}$}\\
& $\Z/8\Z \times \Z/8\Z$ & & & & \\ \hline
\end{tabular}
\end{center}
\end{proposition}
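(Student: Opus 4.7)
The plan is to bound $E(\Q^{ab})[p^{\infty}]$ prime by prime. Write $E(\Q^{ab})[p^{\infty}] \cong \Z/p^a\Z \times \Z/p^{a+b}\Z$ and bound the exponents $a$ and $b$ separately.

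For $p = 2$, apply Proposition \ref{R-DZB} directly: every group in Table \ref{table:RDZB} is a subgroup of either $\Z/4\Z \times \Z/16\Z$ or $\Z/8\Z \times \Z/8\Z$. For odd $p$, since $\Q(E[p^a]) \subseteq \Q^{ab}$ is abelian over $\Q$, Theorem \ref{fulltorsionabelian} forces $p^a \in \{2,3,4,5,6,8\}$, so $a = 0$ unless $p \in \{3, 5\}$, in which case $a \leq 1$. Applying Lemma \ref{isogeny} with $K = \Q^{ab}$ yields a $\Q$-rational $p^b$-isogeny, so by Theorem \ref{isogoverQ} we have $b \leq 3$ for $p = 3$, $b \leq 2$ for $p = 5$, $b \leq 1$ for $p \in \{7, 11, 13, 17, 19, 37, 43, 67, 163\}$, and $b = 0$ otherwise. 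These bounds are already enough to conclude for $p \geq 7$ and for $p \in \{3, 5\}$ with $a = 0$; only the edge cases $(p, a, b) = (3, 1, 3)$ and $(5, 1, 2)$ need further work.

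For $(3, 1, 3)$: Lemmas \ref{isogeny} and \ref{lem:Epabelian} force $E$ itself to have both a 27-isogeny and two independent 3-isogenies, yielding $C_3(E) = 4$. But the only $\Q$-isogeny class containing 27-isogenies (cf. Theorem \ref{isogoverQ}) is the CM class 27.a with $j \in \{0, -2^{15} \cdot 3 \cdot 5^3\}$, which forms a chain of four 3-isogenous curves with no independent 3-isogeny; each curve in this chain has $C_3 = 3$, so $C_3(E) = 4$ is never realized over $\Q$, contradicting the hypothesis.

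For $(5, 1, 2)$: Kenku's Theorem \ref{8qisogs} only gives $C_5(E) = 3$, which is consistent with two independent 5-isogenies plus a 25-isogeny, so a finer argument is required. I would analyze the Galois image $G := \rho_{E, 125}(\GQ) \subseteq \GL(2, \Z/125\Z)$: the three $\Q$-rational cyclic subgroups constrain $G$ to a Borel-type subgroup (upper-left-lower structure with $b \equiv 0 \bmod 25$ and $c \equiv 0 \bmod 5$), and $E(\Q^{ab})[125] \supseteq \Z/5\Z \times \Z/125\Z$ would require the commutator subgroup $[G, G]$ to fix that subgroup pointwise. A direct commutator computation modulo $5$ then forces the image of the map $G \to (\Z/5\Z)^2$, $g \mapsto ((a(g) - d(g)) \bmod 5,\ b(g) \bmod 5)$, to lie on a line through the origin; the resulting three subcases are each ruled out either by the surjectivity of $\det \rho_{E, 125}$ onto $(\Z/125\Z)^\times$ as the cyclotomic character (which excludes the case $a \equiv d \bmod 5$), by producing a $G$-stable cyclic order-$125$ subgroup (hence a forbidden 125-isogeny by Theorem \ref{isogoverQ}), or by direct CM check using \cite{gonzlozano} Table 1 for the finitely many CM $j$-invariants admitting 5-isogenies.

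The main obstacle is the $(5, 1, 2)$ case: unlike $(3, 1, 3)$, it is not ruled out by isogeny-class structure alone, and one needs either the $\GL(2, \Z/125\Z)$ image/commutator analysis sketched above or an equivalent argument on the modular curve of mixed level structure parameterizing elliptic curves with split Cartan 5-level and a cyclic 25-isogeny.
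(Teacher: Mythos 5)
Your overall decomposition ($p=2$ via Table \ref{table:RDZB}; $a\leq 1$ for $p\in\{3,5\}$ and $a=0$ otherwise via Theorem \ref{fulltorsionabelian}; $b$ bounded via Lemma \ref{isogeny} and Theorem \ref{isogoverQ}) is exactly the paper's, and your treatment of $p=2$, $p\geq 7$, and the case $(3,1,3)$ is sound in substance (for $(3,1,3)$ the paper instead cites Table 1 of \cite{gonzlozano} to note that the CM curves with a $27$-isogeny have no abelian division field $\Q(E[n])$ for $n\geq 2$; your route through the structure of the unique $27$-isogeny class works equally well).

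The genuine problem is your handling of $(5,1,2)$, which you declare the ``main obstacle'' and then only sketch. You assert that Kenku's bound $C_5(E)\leq 3$ ``is consistent with two independent 5-isogenies plus a 25-isogeny,'' and on that basis launch an unexecuted $\GL(2,\Z/125\Z)$ commutator analysis. But that assertion is false under the paper's convention: $C_p(E)$ counts \emph{all} $\Q$-rational cyclic subgroups of $p$-power order including the trivial one (this is why the table in Theorem \ref{8qisogs} reads $C_p=1$, not $0$, for primes admitting no isogeny). If $E(\Q^{ab})[5^{\infty}]\supseteq \Z/5\Z\times\Z/125\Z$, then Corollary \ref{CPlowerbound} gives two distinct order-$5$ subgroups $A\neq B$, and Lemma \ref{isogeny} (applied with $m=5$, $mn=125$) gives a rational cyclic subgroup $C$ of order $25$; the four subgroups $\{\mathcal{O}\}, A, B, C$ force $C_5(E)\geq 4>3$, a contradiction. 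So $(5,1,2)$ dies by exactly the same counting you use for $(3,1,3)$, and no level-$125$ image computation is needed. (This counting is in fact the paper's own closing step for $p=5$; the paper's Magma computation on subgroups of $\GL(2,25)$ is there to prove the \emph{stronger} fact that $\Z/5\Z\times\Z/25\Z$ itself cannot occur, which is used later in Proposition \ref{prop:5isogeny} but is not needed for the bound stated in this proposition.) As written, your proof of the $p=5$ column rests on a vague three-subcase sketch whose correctness is not established; replace it with the one-line $C_5$ count. A minor further point: the citation for ``full $p$-torsion over $\Q^{ab}$ implies two independent $p$-isogenies'' should be Corollary \ref{CPlowerbound} (and the remark following it), not Lemma \ref{lem:Epabelian}, which is the converse implication.
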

\begin{proof}
Note that $E(\overline{\Q})[p^{n}] \cong \Z/p^n\Z \times \Z/p^n\Z$, and thus $E(\Q^{ab})[p^n] \subseteq \Z/p^n\Z \times \Z/p^n\Z$ for any $n$.  However, by Theorem \ref{fulltorsionabelian}, if $\Q(E[p^n])$ is abelian, then $p = 2,\; 3, \text{ or } 5$, and therefore for any prime except 2, 3, and 5, we must have that $E$ does not have full $p$ torsion defined over $\Q^{ab}$.  Thus, if $p > 5$ then $E(\Q^{ab})[p^{\infty}] \subseteq \Z/p^n\Z$ for some $n$.  However, since $\Q^{ab}$ is a Galois extension of $\Q$, Lemma \ref{isogeny} shows that $E(\Q^{ab})[p^{\infty}] \subseteq \Z/p\Z$ for $p =$ 7, 11, 13, 17, 19, 37, 43, 67, and 163, and for all other primes $l$ larger than 5, $E(\Q^{ab})[l^{\infty}] \cong \{ \mathcal{O} \}$.

For the prime $p=2$, we simply refer to Table \ref{table:RDZB}.

For the prime $p=3$, first notice that $E$ cannot have full 9-torsion over $\Q^{ab}$ because of Theorem \ref{fulltorsionabelian}.  Thus, $E(\Q^{ab})[3^{\infty}] \subseteq \Z/3\Z \times \Z/3^e\Z$ for some natural number $e$.  By Lemma \ref{isogeny} if $E(\Q^{ab})[3^{\infty}] \cong \Z/3\Z \times \Z/3^e\Z$ then $E$ has a $3^{e-1}$ isogeny.  By Theorem \ref{isogoverQ}, the largest 3-power degree rational isogeny is 27, and so $e-1 \leq 3$, i.e. $E(\Q^{ab})[3^{\infty}] \subseteq \Z/3\Z \times \Z/81\Z$.  However, suppose that in fact $E(\Q^{ab})[3^{\infty}] \cong \Z/3\Z \times \Z/81\Z$.  Then by the above argument, $E$ has a rational 27-isogeny.  However, the only elliptic curves over $\Q$ that have a 27-isogeny are CM curves (those with $j$-invariant $-2{15} \cdot 3 \cdot 5^3$).  By Table 1 in \cite{gonzlozano} we see that such a curve does not have $\Q(E[n])$ abelian for any $n \geq 2$.  Thus, $E(\Q^{ab})[3^{\infty}] \subseteq \Z/3\Z \times \Z/27\Z$.

For the prime $p=5$, first notice that $E$ cannot have full 25-torsion over $\Q^{ab}$ because of Theorem \ref{fulltorsionabelian}.
By an identical argument as in the $p=3$ case, we have that $E(\Q^{ab})[5^{\infty}] \cong \Z/5\Z \times \Z/125\Z$, since the largest 5-power degree rational isogeny is 25. However, suppose that $E(\Q^{ab})[5^{\infty}] \cong \Z/5\Z \times \Z/25\Z$.  Consider the Galois representation $\rho_{E,25}: \Gal(\overline{\Q}/\Q) \rightarrow \Aut(E[25]) \cong \GL(2,25)$.  Let $G$ denote the image of $\rho_{E,25}$.  Since full 5-torsion is defined over $\Q^{ab}$, Corollary \ref{CPlowerbound} says there is a basis of $E[5]$ such that $G \bmod 5$ is contained in a split Cartan subgroup of $\GL(2,5)$.  Thus, we have that
$$G \leq \mathcal{G} := \left\{ \begin{bmatrix} a & 5b \\ 5c & d \end{bmatrix} : a,d \in \left(\Z/25\Z\right)^{\times} , b,c \in \Z/5\Z \right\}.$$

Now, let $H$ denote the image $\rho_{E,25}(\Gal(\overline{\Q}/\Q(E[25])\cap\Q^{ab}))$.  Notice that $H = [G,G]$ the commutator subgroup of $G$.  Since $E$ has a point of order 25 over $\Q^{ab}$, we have that $H$ must fix a vector of order 25 in $\left(\Z/25\Z\right)^2$, and therefore $[G,G]$ must also fix a vector of order 25 in $\left(\Z/25\Z\right)^2$.

By the Weil-pairing the image of $\rho_{E,25}$ must have determinant equal to the full group $\left(\Z/25\Z\right)^{\times}$, and therefore $G$ must be a subgroup of $\mathcal{G}$ with full determinant, and whose commutator subgroup fixes a vector of order 25 in $\left(\Z/25\Z\right)^2$.  Using Magma \cite{magma} we can compute all such subgroups of $\GL(2,25)$, and further we can also compute, given a subgroup of $\GL(2,25)$, the isogenies of an elliptic curve associated with that image.

We thus compute that in fact all subgroups of $\mathcal{G}$ with the described properties all yield a 25-isogeny, and thus any elliptic curve with such an image must in fact have a 25-isogeny.  However, since full 5-torsion was defined over $\Q^{ab}$, Corollary \ref{CPlowerbound} gives two isogenies of degree 5 and thus it is impossible for $E$ to have a 25-isogeny, otherwise $C_5(E) = 4$ contradicting Theorem \ref{8qisogs}.  Thus, $E(\Q^{ab})[5^{\infty}] \subseteq \Z/5\Z \times \Z/5\Z$.
\end{proof}

To prove bounds on the structure of $E(\Q^{ab})_{\text{tors}}$ We will need a lemma about full 6-torsion from \cite{gonzlozano}:

\begin{lemma}[\cite{gonzlozano}, Lemma 3.12]\label{full6torsion}
Let $E/\Q$ be an elliptic curve.  If $\Q(E[6])/\Q$ is abelian, then $\Gal(\Q(E[2])/\Q) \cong \Z/2\Z$.
\end{lemma}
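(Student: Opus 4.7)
The plan is to combine Theorem \ref{fulltorsionabelian} with the isogeny bounds of Corollary \ref{CPlowerbound} and Theorem \ref{8qisogs}, the key idea being that abelianness of $\Q(E[6])/\Q$ cuts the possibilities for $\Gal(\Q(E[2])/\Q)$ down to two, and rational full $2$-torsion would force too many $2$- and $3$-isogenies to coexist.

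First, I will observe that $\Q(E[2]) \subseteq \Q(E[6])$ and both extensions are Galois over $\Q$, so $\Gal(\Q(E[2])/\Q)$ is a quotient of $\Gal(\Q(E[6])/\Q)$. Applying Theorem \ref{fulltorsionabelian} with $n = 6$, the hypothesis that $\Q(E[6])/\Q$ is abelian forces $\Gal(\Q(E[6])/\Q)$ to be isomorphic to either $(\Z/2\Z)^2$ or $(\Z/2\Z)^3$. In particular this group has exponent $2$, so its quotient $\Gal(\Q(E[2])/\Q)$ also has exponent dividing $2$. Since $\Gal(\Q(E[2])/\Q)$ embeds into $\GL(2,2) \cong S_3$, the only possibilities left are the trivial group and $\Z/2\Z$.

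Second, I will rule out the trivial case by contradiction. Suppose $E[2] \subseteq E(\Q)$. Then each of the three nontrivial rational $2$-torsion points generates a distinct $\Q$-rational cyclic subgroup of order $2$, so $C_2(E) \geq 3$. On the other hand, $\Q(E[3]) \subseteq \Q(E[6])$ is a Galois subextension of an abelian extension of $\Q$, hence itself abelian over $\Q$. Applying Corollary \ref{CPlowerbound} with $p = 3$ then yields $C_3(E) \geq 3$. But Theorem \ref{8qisogs} asserts that $C_3(E) \geq 3$ forces $C_2(E) \leq 2$: when $C_3(E) = 3$ one has $C_2(E) \leq 2$, and when $C_3(E) = 4$ one has $C_2(E) = 1$. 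This contradicts $C_2(E) \geq 3$, so the trivial case cannot occur, and therefore $\Gal(\Q(E[2])/\Q) \cong \Z/2\Z$.

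There is no substantive obstacle, as the deep modular-curve input has been absorbed into the cited results. The only mild care needed is in the contradiction step: one has to remember that full rational $2$-torsion yields three rational cyclic order-$2$ subgroups (not just one), which is what unlocks the Kenku bound and produces the clash with $C_3(E) \geq 3$.
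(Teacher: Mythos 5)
The paper offers no proof of this lemma: it is imported verbatim as Lemma 3.12 of \cite{gonzlozano}, so there is nothing internal to compare against. Your argument is a correct and self-contained derivation from the other black-box results quoted in the paper. The first step is sound: $\Q(E[2]) \subseteq \Q(E[6])$, the $n=6$ row of Theorem \ref{fulltorsionabelian} forces $\Gal(\Q(E[6])/\Q)$ to have exponent $2$, and the only exponent-$2$ subgroups of $\GL(2,\F_2) \cong S_3$ are $\{1\}$ and $\Z/2\Z$ (this is where $\Z/3\Z$, which is abelian and hence not excluded by abelianness alone, gets eliminated). The second step is also sound: full rational $2$-torsion gives at least three $\Q$-rational cyclic subgroups of order $2$, so $C_2(E) \geq 3$, while $\Q(E[3]) \subseteq \Q(E[6])$ abelian gives $C_3(E) \geq 3$ by Corollary \ref{CPlowerbound}, and Kenku's bounds (Theorem \ref{8qisogs}) force $C_2(E) \leq 2$ whenever $C_3(E) \geq 3$. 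One caveat worth recording: the $n=6$ entry of the table in Theorem \ref{fulltorsionabelian} is, in the original source, established \emph{using} Lemma 3.12, so your derivation inverts the logical order of \cite{gonzlozano}; it is perfectly valid as a consistency check among the cited statements, but it should not be presented as an independent proof of the lemma unless one verifies that the $n=6$ row can be obtained without it.
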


As has been noted upon in Section \ref{sec:isogeny}, the structure of torsion over $\Q^{ab}$ is closely tied to the $\Q$-isogenies an elliptic curve has. We now prove bounds on $E(\Q^{ab})_{\text{tors}}$ based on these isogenies.

\begin{proposition}\label{prop:noisogenies}
Let $E/\Q$ be an elliptic curve. Suppose $E$ has no non-trivial isogenies over $\Q$. Then $E(\Q^{ab})_{\emph{tors}} \cong \{\mathcal{O}\} \text{ or } \Z/2\Z \times \Z/2\Z$.
\end{proposition}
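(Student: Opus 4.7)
The plan is to show that the no-isogenies hypothesis forces the torsion over $\Q^{ab}$ to be a full-$m$-torsion group $\Z/m\Z \times \Z/m\Z$, and then to restrict $m$ to $\{1,2\}$ using the preceding full-torsion and $2$-primary results.

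First I would write $E(\Q^{ab})_{\text{tors}} \cong \Z/m\Z \times \Z/mn\Z$ for some $m,n \geq 1$. Since $\Q^{ab}/\Q$ is Galois, Lemma \ref{isogeny} produces an $n$-isogeny of $E$ defined over $\Q$. Because $E$ has no non-trivial $\Q$-rational isogenies, we must have $n = 1$, so that $\Z/m\Z \times \Z/m\Z \subseteq E(\Q^{ab})$. In particular $\Q(E[m]) \subseteq \Q^{ab}$, so $\Q(E[m])/\Q$ is abelian. Theorem \ref{fulltorsionabelian} then restricts $m$ to lie in $\{1, 2, 3, 4, 5, 6, 8\}$.

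Next I would rule out the values $m \in \{3,5,6\}$. Each of these has an odd prime $p \in \{3,5\}$ dividing $m$, so $E$ has full $p$-torsion defined over $\Q^{ab}$ and $\Q(E[p])/\Q$ is abelian. Corollary \ref{CPlowerbound} then forces $C_p(E) \geq 3$; in particular $E$ admits a non-trivial $p$-isogeny over $\Q$, contradicting the hypothesis. This leaves $m \in \{1, 2, 4, 8\}$.

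To eliminate $m = 4$ and $m = 8$, I would note that in either case $E(\Q^{ab})[2^{\infty}]$ contains $\Z/4\Z \times \Z/4\Z$, which is strictly larger than $\Z/2\Z \times \Z/2\Z$. Lemma \ref{2powertorsion} (or equivalently an inspection of Table \ref{table:RDZB}, which shows that every row whose first entry strictly contains $\Z/2\Z \times \Z/2\Z$ requires a $2$-isogeny) then produces a $\Q$-rational $2$-isogeny, again contradicting the hypothesis. Hence $m \in \{1, 2\}$, giving $E(\Q^{ab})_{\text{tors}} \cong \{\mathcal{O}\}$ or $\Z/2\Z \times \Z/2\Z$.

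There is no real obstacle here: the argument is a clean packaging of the three structural inputs (Lemma \ref{isogeny} for $n$, Theorem \ref{fulltorsionabelian} plus Corollary \ref{CPlowerbound} for odd-prime $m$, and Lemma \ref{2powertorsion} for the $2$-primary part). The only place one must be slightly careful is invoking Lemma \ref{isogeny} — it requires the ambient field to be Galois over $\Q$, which is why we need the full Kronecker--Weber structure of $\Q^{ab}$ and not merely some abelian subfield.
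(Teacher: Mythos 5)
Your proof is correct and follows essentially the same route as the paper: Lemma \ref{isogeny} to force $n=1$, Corollary \ref{CPlowerbound} to kill odd primes dividing $m$, and Lemma \ref{2powertorsion} to reduce the $2$-primary part to at most $\Z/2\Z\times\Z/2\Z$. The intermediate appeal to Theorem \ref{fulltorsionabelian} to first narrow $m$ to $\{1,2,3,4,5,6,8\}$ is harmless but unnecessary, since Corollary \ref{CPlowerbound} already rules out every odd prime divisor of $m$ directly.
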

\begin{proof}
By Lemma \ref{isogeny} it follows that $E(\Q^{ab})_{\text{tors}} \cong \Z/m\Z \times \Z/m\Z$ for some $m \geq 1$. However, Corollary \ref{CPlowerbound} implies that $m$ is a power of 2. Combining that with Lemma \ref{2powertorsion} shows that $m=1 \text{ or } 2$.
\end{proof}

\begin{proposition}\label{torsionisZpZ}
Let $E/\Q$ be an elliptic curve, let $p = 11, 17, 19, 37, 43, 67, \text{ or } 163$ and suppose that $E$ has a $p$-isogeny. Then $E(\Q^{ab})_{\emph{tors}} \cong \Z/p\Z$.
\end{proposition}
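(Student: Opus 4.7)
The plan is to sandwich $E(\Q^{ab})_{\text{tors}}$ between $\Z/p\Z$ and itself by bounding its $q$-primary components for every prime $q$. The lower bound $\Z/p\Z \subseteq E(\Q^{ab})_{\text{tors}}$ is immediate from Lemma \ref{isogimpliespoint}. For the upper bound, the crucial structural input is Theorem \ref{8qisogs} (first bullet): since $p \geq 11$ and $C_p(E) \geq 2$, we automatically have $C_q(E) = 1$ for every prime $q \neq p$, so there are no further rational isogenies to exploit.

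The $q$-primary analysis then splits cleanly by prime. For $q = p$, Proposition \ref{pupperbound} pins $E(\Q^{ab})[p^\infty]$ to $\Z/p\Z$, matching the lower bound. For primes $q > 5$ with $q \neq p$, Proposition \ref{pupperbound} bounds $E(\Q^{ab})[q^\infty]$ by either $\{\mathcal{O}\}$ or a cyclic group of prime order $q$; any non-trivial point would give a $q$-isogeny by Lemma \ref{isogeny}, contradicting $C_q(E) = 1$. For $q \in \{3, 5\}$, Proposition \ref{pupperbound} bounds $E(\Q^{ab})[q^\infty]$ by a group of the form $\Z/q\Z \times \Z/q^e\Z$; any non-trivial cyclic subgroup again yields a $q$-isogeny via Lemma \ref{isogeny}, while full $q$-torsion would force $C_q(E) \geq 3$ via Corollary \ref{CPlowerbound}---either way contradicting $C_q(E) = 1$.

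The delicate case is $q = 2$, where the analogue of Corollary \ref{CPlowerbound} fails. Combining $C_2(E) = 1$ with Table \ref{table:RDZB} leaves only the possibilities $E(\Q^{ab})[2^\infty] \in \{\{\mathcal{O}\}, \Z/2\Z \times \Z/2\Z\}$; the latter would require $\Gal(\Q(E[2])/\Q) \cong \Z/3\Z$, equivalently the 2-division polynomial being irreducible with square discriminant. To rule this out, I would exploit the fact that the non-cuspidal $\Q$-rational points on $X_0(p)$ give only finitely many $j$-invariants for each $p$ in our list (unique CM values for $p \in \{37, 43, 67, 163\}$ and short explicit lists for $p \in \{11, 17, 19\}$). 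By Lemma \ref{lemma:twists} it suffices to check one representative per $j$-invariant: compute its 2-division polynomial and verify that the discriminant is not a rational square, so that $\Gal(\Q(E[2])/\Q) = S_3$ and $E(\Q^{ab})[2^\infty] = \{\mathcal{O}\}$. This finite verification at the prime $2$ is the main obstacle; the remaining primes fall out uniformly from the structural results.
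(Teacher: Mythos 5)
Your argument is correct and follows essentially the same route as the paper: lower bound from Lemma \ref{isogimpliespoint}, upper bound via Theorem \ref{8qisogs} plus Corollary \ref{CPlowerbound} and the $2$-primary analysis of Table \ref{table:RDZB}, and a finite computation over the known $j$-invariants (one representative per quadratic-twist class, using Lemma \ref{lemma:twists}) to exclude the residual $\Z/2\Z \times \Z/2\Z$ factor. One minor slip worth noting: the two $j$-invariants admitting a $37$-isogeny correspond to non-CM curves, but this does not affect the finiteness you rely on.
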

\begin{proof}
By Lemma \ref{isogimpliespoint} we have that $E(\Q^{ab})_{\text{tors}} \supseteq \Z/p\Z$. For these values of $p$, note that there are no rational isogenies of degree divisible by $p$ besides isogenies of degree exactly $p$, and therefore by Lemma \ref{isogeny} it follows that $E(\Q^{ab})_{\text{tors}} \subseteq \Z/m\Z \times \Z/m\Z \times \Z/p\Z$ for some $m$ with $(m,p)=1$. However, from Theorem \ref{8qisogs} it follows that $E$ has no other rational isogenies. Thus Corollary \ref{CPlowerbound} implies that $m$ is a power of 2. Combining that with Lemma \ref{2powertorsion} shows that $m = 1 \text{ or } 2$.

For any given $p$ in this list there are only finitely many $j$-invariants of elliptic curves having a $p$-isogeny, as $X_0(p)$ has genus greater than $0$.  Given that these $j$-invariants are not 0 or 1728, by Lemma \ref{lemma:twists} it suffices to fix a representative $E_j$ and compute (via Magma \cite{magma}) that $E_j$ does not have full $2$-torsion defined over $\Q^{ab}$.
\end{proof}

\begin{proposition}\label{prop:2isogeny}
Let $E/\Q$ be an elliptic curve. Suppose $C_p(E) = 1$ for all primes $p \neq 2$. Then $E(\Q^{ab})_{\emph{tors}} = E(\Q^{ab})[2^{\infty}]$ and is contained in either $\Z/4\Z \times \Z/16\Z$ or $\Z/8\Z \times \Z/8\Z$ 
\end{proposition}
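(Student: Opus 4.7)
The plan is to decompose $E(\Q^{ab})_{\text{tors}}$ by primary component and show that every odd-primary part vanishes under the hypothesis $C_p(E) = 1$, after which the $2$-primary bound follows immediately from Proposition \ref{pupperbound}. So fix an odd prime $p$ and work to show $E(\Q^{ab})[p^\infty]$ is trivial.

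Write $E(\Q^{ab})[p^\infty] \cong \Z/p^a\Z \times \Z/p^{a+b}\Z$ for some integers $a, b \geq 0$. Since $\Q^{ab}/\Q$ is Galois, I can apply Lemma \ref{isogeny} with $K = \Q^{ab}$ to conclude that $E$ admits a $\Q$-rational $p^b$-isogeny. The hypothesis $C_p(E) = 1$ means $E$ has no nontrivial $\Q$-rational cyclic $p$-power subgroups, forcing $b = 0$. Hence $E(\Q^{ab})[p^\infty] \cong (\Z/p^a\Z)^2$, i.e., whatever $p$-primary torsion exists must be full.

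Now suppose for contradiction that $a \geq 1$. Then $E[p] \subseteq E(\Q^{ab})$, so $\Q(E[p]) \subseteq \Q^{ab}$ and in particular $\Q(E[p])/\Q$ is abelian. Since $p > 2$, Corollary \ref{CPlowerbound} applies and yields $C_p(E) \geq 3$, directly contradicting $C_p(E) = 1$. Therefore $a = 0$, and $E(\Q^{ab})[p^\infty] = \{\mathcal{O}\}$ for every odd prime $p$. Combining this over all odd primes gives $E(\Q^{ab})_{\text{tors}} = E(\Q^{ab})[2^\infty]$, and Proposition \ref{pupperbound} then bounds the latter inside $\Z/4\Z \times \Z/16\Z$ or $\Z/8\Z \times \Z/8\Z$.

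There is no substantive obstacle: the proof is a clean two-line application of already-established tools. The only conceptual point worth highlighting is that the single hypothesis $C_p(E) = 1$ simultaneously kills both possible shapes of $p$-primary torsion over $\Q^{ab}$ — it rules out a cyclic $p$-isogeny piece via Lemma \ref{isogeny}, and it rules out the full $p$-torsion piece via Corollary \ref{CPlowerbound} — which is exactly why reducing to the $2$-primary part is so painless.
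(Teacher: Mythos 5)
Your proof is correct and follows essentially the same route as the paper: Lemma \ref{isogeny} rules out any cyclic odd-$p$ component via $C_p(E)=1$, Corollary \ref{CPlowerbound} rules out full $p$-torsion for odd $p$, and the $2$-primary bound then comes from Proposition \ref{R-DZB}/Proposition \ref{pupperbound}. The paper's proof is just a two-sentence compression of exactly this argument.
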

\begin{proof}
By Corollary \ref{CPlowerbound} and Lemma \ref{isogeny} it follows that $E(\Q^{ab})_{\text{tors}} = E(\Q^{ab})[2^{\infty}]$. Thus $E(\Q^{ab})_{\text{tors}}$ is one of the groups on Table \ref{table:RDZB} from Proposition \ref{R-DZB}.
\end{proof}

\begin{proposition}\label{prop:3isogeny}
Let $E/\Q$ be an elliptic curve. Suppose $E$ has a 3-isogeny and $C_p(E) = 1$ for all primes $p>3$. Then 
$$E(\Q^{ab})_{\emph{tors}} \subseteq
\begin{cases}
\Z/2\Z \times \Z/2N_2\Z, & N_2 = 12,18, \\
\Z/3\Z \times \Z/27\Z, & \\
\Z/4\Z \times \Z/4\Z, & \text{ or }\\
\Z/6\Z \times \Z/12\Z.
\end{cases}$$
\end{proposition}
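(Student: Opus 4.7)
The plan is a case analysis on $C_3(E)$, which since $E$ has a $3$-isogeny and by Theorem \ref{8qisogs} lies in $\{2,3,4\}$. First, the hypothesis $C_p(E)=1$ for $p>3$, combined with Proposition \ref{pupperbound}, Lemma \ref{isogeny} and Corollary \ref{CPlowerbound}, forces $E(\Q^{ab})[p^\infty]=\{\mathcal{O}\}$ for every $p>3$: for $p\in\{7,11,13,17,19,37,43,67,163\}$ Proposition \ref{pupperbound} already bounds $E(\Q^{ab})[p^\infty]\subseteq \Z/p\Z$, and any non-trivial $\Z/p$-subgroup would give a $p$-isogeny by Lemma \ref{isogeny}; for $p=5$ the only other possibility from Proposition \ref{pupperbound} is $\Z/5\Z\times\Z/5\Z$, which by Corollary \ref{CPlowerbound} would force $C_5\geq 3$. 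Thus $E(\Q^{ab})_{\textup{tors}}=A\oplus B$ with $A:= E(\Q^{ab})[2^\infty]$ and $B:= E(\Q^{ab})[3^\infty]\subseteq \Z/3\Z\times\Z/27\Z$, and it remains to bound $A\oplus B$.

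When $C_3=4$, Theorem \ref{8qisogs} forces a representative with a $27$-isogeny, so by Theorem \ref{isogoverQ} the curve $E$ has CM with $j=-2^{15}\cdot 3\cdot 5^3$ and $C_2=1$. Table 1 of \cite{gonzlozano} shows $\Q(E[n])$ is non-abelian for all $n\geq 2$ at this $j$, ruling out full $2$- and full $3$-torsion over $\Q^{ab}$. Combined with Lemma \ref{2powertorsion}, this yields $A=\{\mathcal{O}\}$ and $B\subseteq\Z/27\Z$, so the torsion lies in $\Z/3\Z\times\Z/27\Z$. When $C_3=3$, Theorem \ref{8qisogs} gives $C_2\leq 2$, and the three rational cyclic $3$-power subgroups arise either from a $9$-isogeny or from two independent $3$-isogenies. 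In the first subcase $B=\Z/9\Z$ (a $27$-isogeny, or full $3$-torsion together with the $9$-isogeny, would push $C_3$ past $3$), and Table \ref{table:RDZB} gives $A\subseteq \Z/2\Z\times\Z/4\Z$, so the torsion lies in $\Z/2\Z\times\Z/36\Z$. In the second subcase, full $3$-torsion is abelian; I claim $B=\Z/3\Z\times\Z/3\Z$. The key calculation is that for any candidate $\Z/3\Z\oplus \Z/9\Z \subseteq B$ one can choose a basis $\{v,w\}$ with $E[3]=\langle v,3w\rangle$ aligned to the split-Cartan action mod $3$, write the induced (abelian) Galois action as $g\cdot v=\alpha_g v$ and $g\cdot w=\beta_g w + p_g v + 3r_g w$, and show that commutativity of the image forces $p_g=k_0(\beta_g-\alpha_g)$ for a fixed $k_0\in\Z/3\Z$, which makes the cyclic subgroup $\langle w+k_0 v\rangle$ Galois-stable (the scalar case $\alpha_g=\beta_g$ is separately excluded because it would entail $C_3\geq 5$); this yields a $9$-isogeny and pushes $C_3$ to $4$, a contradiction. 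Combined with $A\subseteq \Z/2\Z\times \Z/4\Z$, the torsion is contained in $\Z/6\Z\times\Z/12\Z$.

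Finally, when $C_3=2$ we have $B=\Z/3\Z$ and Table \ref{table:RDZB} with $C_2\leq 4$ leaves $A\in\{\{\mathcal{O}\},\Z/2\Z\times\Z/2\Z,\Z/2\Z\times\Z/4\Z,\Z/2\Z\times\Z/8\Z,\Z/4\Z\times\Z/4\Z\}$; the first four choices give torsion inside $\Z/2\Z\times\Z/24\Z$. The main obstacle is the subcase $A=\Z/4\Z\times\Z/4\Z$, which would yield $\Z/4\Z\times\Z/12\Z$, a group not contained in any of the listed bounds. To rule it out, my plan is to enumerate the $2$-adic images in the Rouse-Zureick-Brown database \cite{2adicimage} realizing $E(\Q^{ab})[2^\infty]\cong \Z/4\Z\times\Z/4\Z$ (those with isogeny pattern $(1,2,2,2)$ or $(1,2,4,4)$ from Table \ref{table:RDZB}), and for each such image $H$ verify, using the universal model and its $3$-division polynomial, that the modular curve parametrizing elliptic curves with both image $H$ and a rational $3$-isogeny (equivalently, the fiber product $X_H\times_{X(1)}X_0(3)$) has no non-cuspidal $\Q$-rational points. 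This finite computation in Magma \cite{magma}, carried out with the model-finding techniques of Section \ref{sec:class}, is the hardest step of the proof.
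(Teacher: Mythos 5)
Your overall architecture (kill the $p>5$ part, then case on $C_3(E)\in\{2,3,4\}$ using Theorem \ref{8qisogs} and Table \ref{table:RDZB}) is the paper's, and your $C_3(E)=3$ analysis — including the split-Cartan/commutator computation showing $\Z/3\Z\times\Z/9\Z\subseteq E(\Q^{ab})[3^\infty]$ would force a $9$-isogeny and $C_3\geq 4$ — is a sound refinement of a step the paper merely asserts. But your final step, which you correctly identify as the hardest, cannot succeed: you propose to show that $E(\Q^{ab})[2^\infty]\cong\Z/4\Z\times\Z/4\Z$ is incompatible with a rational $3$-isogeny, and that is false. The curve \lmfdbec{30}{a}{2} has $C_3(E)=2$, $C_2(E)=4$, $C_p(E)=1$ for $p>3$, and $E(\Q^{ab})_{\text{tors}}\cong\Z/4\Z\times\Z/12\Z$; indeed $\Z/4\Z\times\Z/4N_4\Z$ with $N_4=3$ is one of the groups realized in Theorem \ref{classification} (see Table \ref{table:rest}). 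Your search for rational points on $X_H\times_{X(1)}X_0(3)$ would therefore turn up non-cuspidal points and refute, rather than confirm, the claim. The discrepancy you noticed is real, but it lies in the statement, not in the missing argument: the paper's own proof of the $C_3(E)=2$ case concludes $E(\Q^{ab})_{\text{tors}}\subseteq\Z/4\Z\times\Z/12\Z$ or $\Z/2\Z\times\Z/24\Z$, so the displayed list in the proposition is evidently meant to include $\Z/4\Z\times\Z/12\Z$ (the "$\Z/4\Z\times\Z/4\Z$" line should allow $N_4=1,3$). No computation will close this gap because the stronger statement you are trying to prove is false.

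There is a second, more local error in your $C_3(E)=4$ case. Theorem \ref{8qisogs} only guarantees that \emph{some representative of the isogeny class} has a cyclic rational subgroup of order $27$; it does not follow that $E$ itself has $j=-2^{15}\cdot3\cdot5^3$. The $27$-isogeny class also contains curves with $j=0$ (e.g.\ \lmfdbec{27}{a}{1} and \lmfdbec{27}{a}{3}), for which $\Q(E[3])$ \emph{is} abelian and $E(\Q^{ab})[3^\infty]\cong\Z/3\Z\times\Z/9\Z$ (Table \ref{table:finite}), directly contradicting your assertion that full $3$-torsion is ruled out and that $B\subseteq\Z/27\Z$. The containment in $\Z/3\Z\times\Z/27\Z$ happens to survive, but your argument does not establish it for these curves. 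The paper handles this case differently: if both full $2$- and full $3$-torsion were defined over $\Q^{ab}$, Lemma \ref{full6torsion} would force $\Gal(\Q(E[2])/\Q)\cong\Z/2\Z$, hence a rational $2$-torsion point and $C_2(E)\geq 2$, contradicting $C_2(E)=1$; so whenever full $3$-torsion is present the $2$-part vanishes by Lemma \ref{2powertorsion}, and the remaining subcases ($E(\Q^{ab})[3^\infty]\subseteq\Z/27\Z$ or $\subseteq\Z/9\Z$) are treated separately. You should adopt that argument rather than pinning down a single $j$-invariant.
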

\begin{proof}
By Theorem \ref{8qisogs} we have either 
\begin{itemize}
\item $C_3(E) = 4$ and $C_p(E) = 1$ for all primes $p \neq 3$,
\item $C_3(E) = 3$ and $C_2(E) \leq 2$,
\item or $C_3(E) = 2$ and $C_2(E) \leq 4$.
\end{itemize}

Suppose $C_3(E) = 4$ and $C_p(E) = 1$ for all primes $p \neq 3$. Then by Proposition \ref{pupperbound} we know that $E(\Q^{ab})[3^{\infty}] \subseteq \Z/3\Z \times \Z/27\Z$. Suppose that $E$ has full 2-torsion over $\Q^{ab}$ and full 3-torsion over $\Q^{ab}$ and hence full 6-torsion over $\Q^{ab}$.  Then by Lemma \ref{full6torsion} we must have $\Gal(\Q(E[2])/\Q) \cong \Z/2\Z$.  This is possible only if $E$ has a point of order 2 defined over $\Q$, but that would give a Galois stable subgroup of order 2, and hence $C_2(E) \geq 2$ a contradiction.  Therefore by Lemma \ref{2powertorsion} we have that $E(\Q^{ab})[2^{\infty}] \cong \{ \mathcal{O} \}$ and so $E(\Q^{ab})_{\text{tors}} \subseteq \Z/3\Z \times \Z/27\Z$. If $E$ does not have full 3-torsion over $\Q^{ab}$ then $E(\Q^{ab})[3^{\infty}] \subseteq \Z/27\Z$.  If $E(\Q^{ab})[3^{\infty}] \cong \Z/27\Z$ then $E$ has a 27-isogeny and all such curves have CM (for instance see \cite{lozanorobledo1} Table 4).  By Table 1 in \cite{gonzlozano} we see that such a curve does not have full $n$ torsion defined over $\Q^{ab}$ for any $n$.  Thus, $E(\Q^{ab})_{\text{tors}} = E(\Q^{ab})[3^{\infty}] \cong \Z/27\Z$ or $E(\Q^{ab})[3^{\infty}] \subseteq \Z/9\Z$ and $E(\Q^{ab})[2^{\infty}] \subseteq \Z/2\Z \times \Z/2\Z$, which yields $E(\Q^{ab})_{\text{tors}} \subseteq \Z/2\Z \times \Z/18\Z$.

Suppose that $C_3(E)=3$ and $C_2(E) \leq 2$. Then we have that $E(\Q^{ab})[3^{\infty}] \cong \Z/3\Z \times \Z/3\Z \text{ or } \Z/9\Z$. The largest $E(\Q^{ab})[2^{\infty}]$ can be so that $C_2(E) = 2$ is $\Z/2\Z \times \Z/4\Z$.  Therefore, $E(\Q^{ab})_{\text{tors}} \subseteq \Z/6\Z \times \Z/12\Z \text{ or } \Z/2\Z \times \Z/36\Z$.

Suppose that $C_3(E)=2$ and $C_2(E) \leq 4$. Then $E(\Q^{ab})[3^{\infty}] \cong \Z/3\Z$. From Proposition \ref{R-DZB} the largest $E(\Q^{ab})[2^{\infty}]$ can be so that $C_2(E) = 4$ is $\Z/4\Z \times \Z/4\Z$ or $\Z/2\Z \times \Z/8\Z$.  Thus,  $E(\Q^{ab})_{\text{tors}} \subseteq \Z/4\Z \times \Z/12\Z \text{ or } \Z/2\Z \times \Z/24\Z$.
\end{proof}

\begin{proposition}\label{prop:5isogeny}
Let $E/\Q$ be an elliptic curve. Suppose $E$ has a 5-isogeny. Then 
$$E(\Q^{ab})_{\emph{tors}} \subseteq 
\begin{cases}
\Z/5\Z \times \Z/5\Z,\\
\Z/2\Z \times \Z/50\Z,\\
\Z/2\Z \times \Z/30\Z,\text{ or } \\
\Z/2\Z \times \Z/20\Z. 
\end{cases}$$
\end{proposition}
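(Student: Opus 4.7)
The plan is to mirror the structure of Proposition \ref{prop:3isogeny}: split into cases according to $C_5(E)$ using Kenku's bound (Theorem \ref{8qisogs}), and then refine using the admissible cyclic $\Q$-isogeny degrees (Theorem \ref{isogoverQ}), Proposition \ref{pupperbound}, Lemma \ref{2powertorsion}, and the 2-primary data in Proposition \ref{R-DZB}. Since $E$ has a 5-isogeny, $C_5(E) \in \{2,3\}$.

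First suppose $C_5(E) = 3$; then Kenku forces $C_p(E) = 1$ for every $p \neq 5$. There are two mutually exclusive sub-cases, since having both two independent 5-isogenies and a 25-isogeny would push $C_5$ to at least $4$. If $E$ has two independent rational 5-isogenies, then $\rho_{E,5}$ lands in a split Cartan so $\Q(E[5])/\Q$ is abelian, and Lemma \ref{isogeny} together with the absence of a 25-isogeny forces $E(\Q^{ab})[5^{\infty}] \cong \Z/5\Z \times \Z/5\Z$. The 2-primary bound from Lemma \ref{2powertorsion} with $C_2 = 1$ is $\Z/2\Z \times \Z/2\Z$, and the key step is to exclude even this: if $\Q(E[2])/\Q$ were abelian, then together with $\Q(E[5])/\Q$ abelian the compositum $\Q(E[10]) = \Q(E[2])\cdot \Q(E[5])$ would be abelian, contradicting Theorem \ref{fulltorsionabelian} (whose list $n \in \{2,3,4,5,6,8\}$ omits $n=10$). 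Hence $E(\Q^{ab})_{\text{tors}} \cong \Z/5\Z \times \Z/5\Z$. If instead $E$ has a 25-isogeny, then $E(\Q^{ab})[5^{\infty}] = \Z/25\Z$ by Lemma \ref{isogimpliespoint} combined with Proposition \ref{pupperbound}, and combining with the 2-primary bound yields $\Z/2\Z \times \Z/50\Z$.

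Next suppose $C_5(E) = 2$; the unique non-trivial 5-power $\Q$-rational cyclic subgroup must have order exactly $5$ (a 25-isogeny already forces $C_5 \geq 3$), so $E(\Q^{ab})[5^{\infty}] = \Z/5\Z$. Kenku's sub-cases are $(C_3, C_2) \in \{(\leq 2, 1),\; (1, \leq 2)\}$, which I refine using Theorem \ref{isogoverQ}: a cyclic $\Q$-isogeny of degree $45$ does not exist, so a 9-isogeny is incompatible with the 5-isogeny, and $C_3 = 2$ must come from a single 3-isogeny, giving $E(\Q^{ab})[3^{\infty}] = \Z/3\Z$; similarly $C_3 = 1$ rules out any 3-torsion by Lemma \ref{isogeny}. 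Table \ref{table:RDZB} supplies $\Z/2\Z \times \Z/2\Z$ as the 2-primary bound when $C_2 = 1$ and $\Z/2\Z \times \Z/4\Z$ when $C_2 = 2$. Multiplying the primary components, the three sub-cases yield $\Z/2\Z \times \Z/30\Z$ when $(C_3,C_2)=(2,1)$, a group inside $\Z/2\Z \times \Z/20\Z$ when $(C_3,C_2)=(1,1)$, and $\Z/2\Z \times \Z/20\Z$ when $(C_3,C_2)=(1,2)$, each among the four listed groups.

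The main obstacle is the exclusion of full 2-torsion in the split-Cartan sub-case of $C_5 = 3$; without it, the bound would weaken to $\Z/10\Z \times \Z/10\Z$, a group absent from the classification. The clean resolution is to recognize that full 2-torsion and full 5-torsion over $\Q^{ab}$ would make $\Q(E[10])/\Q$ abelian, which Theorem \ref{fulltorsionabelian} forbids.
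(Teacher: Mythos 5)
Your proof is correct and takes essentially the same route as the paper: the case split by $C_5(E)$ via Kenku's theorem, the primary-component bounds from Lemma \ref{2powertorsion} and Proposition \ref{R-DZB}, and the exclusion of full $2$-torsion alongside full $5$-torsion by observing that $\Q(E[10])/\Q$ abelian contradicts Theorem \ref{fulltorsionabelian} all match the paper's argument. The only cosmetic differences are that you make the split-Cartan versus $25$-isogeny dichotomy explicit in the $C_5=3$ case and rule out $9$-isogenies via the non-existence of $45$-isogenies rather than directly from the bound $C_3(E)\leq 2$.
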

\begin{proof}
By Theorem \ref{8qisogs} we have either
\begin{itemize}
\item $C_5(E) = 3$ and $C_p(E) = 1$ for all primes $p \neq 5$,
\item $C_5(E) = 2$ and $C_3(E) \leq 2$ and $C_2(E) = 1$,
\item or $C_5(E) = 2$ and $C_3(E) = 1$ and $C_2(E) \leq 2$.
\end{itemize}
Suppose $C_5(E) = 3$ and $C_p(E) = 1$ for all primes $p \neq 5$. By Corollary \ref{CPlowerbound} we see that $E$ does not have full 3-torsion over $\Q^{ab}$. By Lemma \ref{2powertorsion} we have $E(\Q^{ab})[2^{\infty}] \cong \{\mathcal{O}\} \text{ or } \Z/2\Z \times \Z/2\Z$. If $E(\Q^{ab})[5^{\infty}] \cong \Z/5\Z \times\Z/5\Z$, then $E(\Q^{ab})[2^{\infty}] \cong \{\mathcal{O}\}$, since otherwise $E$ would have full 10-torsion over $\Q^{ab}$, contradicting Theorem \ref{fulltorsionabelian}. Thus if $E(\Q^{ab})[5^{\infty}] \cong \Z/5\Z \times \Z/5\Z$, then $E(\Q^{ab})_{\text{tors}} = E(\Q^{ab})[5^{\infty}] \cong \Z/5\Z \times \Z/5\Z$. If $E$ does not have full 5-torsion over $\Q^{ab}$, then $E(\Q^{ab})[5^{\infty}] \cong \Z/25\Z$ in order for $C_5(E)=3$. Then $E(\Q^{ab})_{\text{tors}} \cong \Z/25\Z$ or $\Z/2\Z \times \Z/50\Z$.

Suppose $C_5(E) = 2$ and $C_3(E) \leq 2$ and $C_2(E) = 1$. Then $E(\Q^{ab})[5^{\infty}] \cong \Z/5\Z$ and $E(\Q^{ab})[3^{\infty}] \subseteq \Z/3\Z$ by Lemma \ref{isogimpliespoint}. Again by Lemma \ref{2powertorsion} we have $E(\Q^{ab})[2^{\infty}] \cong \{ \mathcal{O} \}$ or $\Z/2\Z \times \Z/2\Z$. Thus, $E(\Q^{ab})_{\text{tors}} \subseteq \Z/2\Z \times \Z/30\Z$.

Suppose $C_5(E) = 2$ and $C_3(E) = 1$ and $C_2(E) \leq 2$. Then again $E(\Q^{ab})[5^{\infty}] \cong \Z/5\Z$. By Table Proposition \ref{R-DZB}, the largest $E(\Q^{ab})[2^{\infty}]$ can be so that $C_2(E) = 2$ is $\Z/2\Z \times \Z/4\Z$. Further, $E$ does not have full torsion of any order prime to 2 by Corollary \ref{CPlowerbound}. Thus, $E(\Q^{ab})_{\text{tors}} \subseteq \Z/2\Z \times \Z/20\Z$.
\end{proof}

\begin{proposition}\label{prop:7isogeny}
Let $E/\Q$ be an elliptic curve. Suppose $E$ has a 7-isogeny. Then either $E(\Q^{ab})_{\emph{tors}} \subseteq \Z/21\Z$ or $\Z/2\Z \times \Z/28\Z$. 
\end{proposition}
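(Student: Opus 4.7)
The strategy mirrors that of Proposition \ref{prop:5isogeny}: invoke Theorem \ref{8qisogs} to enumerate the constraints on $C_q(E)$ for all primes $q$, then bound each $p$-primary component of $E(\Q^{ab})_{\text{tors}}$ separately using Propositions \ref{pupperbound}, \ref{R-DZB} and Corollary \ref{CPlowerbound}. Because $E$ has a $7$-isogeny, Lemma \ref{isogimpliespoint} gives a point of order $7$ in $E(\Q^{ab})$, and Proposition \ref{pupperbound} gives $E(\Q^{ab})[7^{\infty}] \subseteq \Z/7\Z$, so in fact $E(\Q^{ab})[7^{\infty}] \cong \Z/7\Z$. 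Hence the only remaining task is to bound the $\ell$-primary parts for $\ell = 2, 3, 5$ and primes $\ell \geq 11$.

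Theorem \ref{8qisogs} tells us that when $C_7(E) = 2$ we must have $C_5(E) = 1$ and one of the following two mutually exclusive configurations: (i) $C_3(E) \le 2$ with $C_2(E) = 1$, or (ii) $C_3(E) = 1$ with $C_2(E) \le 2$. In particular, $C_\ell(E) = 1$ for every prime $\ell \ge 11$ other than $7$, so by Lemma \ref{isogeny} no points of such $\ell$-power order can appear in $E(\Q^{ab})$. For the prime $5$, $C_5(E) = 1$ together with Lemma \ref{isogeny} and Corollary \ref{CPlowerbound} forces $E(\Q^{ab})[5^{\infty}] \cong \{\mathcal O\}$. In either configuration, Corollary \ref{CPlowerbound} rules out full $3$-torsion over $\Q^{ab}$ (that would require $C_3(E) \ge 3$), so Lemma \ref{isogeny} gives $E(\Q^{ab})[3^{\infty}] \subseteq \Z/3\Z$ in case (i) and $E(\Q^{ab})[3^{\infty}] \cong \{\mathcal O\}$ in case (ii).

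The two cases then split as follows. In case (i) with $C_2(E) = 1$, Lemma \ref{2powertorsion} forces $E(\Q^{ab})[2^{\infty}] \cong \{\mathcal O\}$, so combining the prime-power bounds yields $E(\Q^{ab})_{\text{tors}} \subseteq \Z/3\Z \times \Z/7\Z \cong \Z/21\Z$. In case (ii) with $C_2(E) \le 2$, inspection of Table \ref{table:RDZB} in Proposition \ref{R-DZB} shows that the largest possibility for $E(\Q^{ab})[2^{\infty}]$ consistent with $C_2(E) \le 2$ is $\Z/2\Z \times \Z/4\Z$; combining with the trivial $3$- and $5$-parts and the $\Z/7\Z$ at $p = 7$ gives $E(\Q^{ab})_{\text{tors}} \subseteq \Z/2\Z \times \Z/28\Z$.

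The main (and only) subtlety is to verify that the enumeration from Theorem \ref{8qisogs} really is exhaustive in the presence of a $7$-isogeny and to justify the exclusion of full $p$-torsion for $p \in \{3,5,7\}$ via the $C_p(E) \ge 3$ criterion of Corollary \ref{CPlowerbound}; once these bookkeeping steps are in place the bound is immediate from assembling the $p$-primary components. No new geometric or arithmetic input is needed beyond what Propositions \ref{pupperbound} and \ref{R-DZB} provide.
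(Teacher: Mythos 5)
Your reduction to the case analysis from Theorem \ref{8qisogs} and the treatment of the primes $\ell \geq 5$ and of the case $C_3(E)=1$, $C_2(E)\le 2$ match the paper. But there is a genuine gap in your case (i): you assert that $C_2(E)=1$ together with Lemma \ref{2powertorsion} forces $E(\Q^{ab})[2^{\infty}] \cong \{\mathcal{O}\}$. That lemma (equivalently, the first rows of Table \ref{table:RDZB}) only gives $E(\Q^{ab})[2^{\infty}] \cong \{\mathcal{O}\}$ \emph{or} $\Z/2\Z \times \Z/2\Z$; the latter is perfectly compatible with $C_2(E)=1$, since it occurs whenever $\Gal(\Q(E[2])/\Q) \cong \Z/3\Z$ (an abelian cubic $2$-division field with no rational $2$-isogeny). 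Consequently, in the sub-case $C_3(E)=2$, $C_2(E)=1$ the general lemmas only yield $E(\Q^{ab})_{\textup{tors}} \subseteq \Z/2\Z \times \Z/2\Z \times \Z/3\Z \times \Z/7\Z \cong \Z/2\Z \times \Z/42\Z$, which is contained in neither $\Z/21\Z$ nor $\Z/2\Z \times \Z/28\Z$. Your argument therefore does not prove the stated proposition in this sub-case.

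The paper closes exactly this hole with additional input: when $C_3(E)=2$ and $C_7(E)=2$ the curve has a $21$-isogeny, $X_0(21)$ has only finitely many non-cuspidal rational points, and for the resulting finite list of $j$-invariants one checks explicitly that none acquires full $m$-torsion over $\Q^{ab}$ for any $2 \le m \le 8$ — in particular no full $2$-torsion — whence $E(\Q^{ab})_{\textup{tors}} \cong \Z/21\Z$ there. (In the remaining sub-case $C_3(E)=C_2(E)=1$, the weaker bound $\Z/2\Z \times \Z/14\Z$ suffices since it sits inside $\Z/2\Z \times \Z/28\Z$, so no extra work is needed; your error is harmless only in that sub-case.) You need to either reproduce this explicit computation on the $21$-isogeny families or supply some other argument ruling out $\Z/2\Z\times\Z/2\Z$ as the $2$-primary part when a $3$-isogeny is also present.
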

\begin{proof}
By Theorem \ref{8qisogs} we have $C_p(E) = 1$ for all primes $p \neq 2,3, 7$ and either $C_3(E) \leq 2$ and $C_2(E) = 1$, or $C_3(E) = 1$ and $C_2(E) \leq 2$.

Suppose $C_3(E) = 2$ and $C_2(E) = 1$. Then $E$ has a 7-isogeny and a 3-isogeny and so $E$ has a 21-isogeny. Since there are only finitely many rational points on $X_0(21)$, there are only a finite number of $j$-invariants for elliptic curves over $\Q$ with a 21-isogeny. We can fix a model for each of these curves and explicitly check that none of these families have full $m$-torsion for any $2 \leq m \leq 8$. Thus by Lemma \ref{isogimpliespoint}, Lemma \ref{isogeny}, and Theorem \ref{fulltorsionabelian} we have $E(\Q^{ab}) \cong \Z/21\Z$.

Suppose instead that $C_3(E) = 1$ and $C_2(E) =2$. Since $C_3(E)=1$, Corollary \ref{CPlowerbound} tells us that $E$ does not have full 3-torsion.  Further, since $C_2(E) = 2$, by Proposition \ref{R-DZB}, it follows that $E(\Q^{ab})[2^{\infty}] \subseteq \Z/2\Z \times \Z/4\Z$. Therefore, by Lemma \ref{isogeny} we have that $E(\Q^{ab})_{\text{tors}} \subseteq \Z/2\Z \times \Z/28\Z$.

Finally if $C_3(E) = C_2(E) = 1$, then by Lemma \ref{isogeny}, Corollary \ref{CPlowerbound}, and Lemma \ref{2powertorsion} we have that $E(\Q^{ab})_{\text{tors}} \subseteq \Z/2\Z \times \Z/14\Z$.
\end{proof}

\begin{proposition}\label{prop:13isogeny}
Let $E/\Q$ be an elliptic curve. Suppose $E$ has a 13-isogeny. Then $E(\Q^{ab})_{\emph{tors}} \cong \Z/13\Z$ or $\Z/2\Z \times \Z/26\Z$.
\end{proposition}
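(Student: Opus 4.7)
The plan is to follow the template established in Propositions \ref{prop:5isogeny}, \ref{prop:7isogeny}, and especially \ref{torsionisZpZ}, extracting the consequences of having a 13-isogeny directly from Theorem \ref{8qisogs} together with the general machinery (Lemma \ref{isogeny}, Lemma \ref{isogimpliespoint}, Corollary \ref{CPlowerbound}, Lemma \ref{2powertorsion}). The key structural input is that $13 \geq 11$, so Theorem \ref{8qisogs} forces $C_{13}(E) = 2$ and $C_q(E) = 1$ for every prime $q \neq 13$; in particular $E$ has no nontrivial rational isogeny of any degree other than $13$.

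First I would record that by Lemma \ref{isogimpliespoint}, $E(\Q^{ab})_{\text{tors}}$ contains a point of order $13$. Writing $E(\Q^{ab})_{\text{tors}} \cong \Z/m\Z \times \Z/mn\Z$, Lemma \ref{isogeny} implies $n$ is the degree of a rational cyclic isogeny, hence $n \in \{1,13\}$. The order of the group is divisible by $13$, so $13 \mid m^2 n$; the case $13 \mid m$ would give full $13$-torsion over $\Q^{ab}$, which by Corollary \ref{CPlowerbound} forces $C_{13}(E) \geq 3$ and contradicts Theorem \ref{8qisogs}. Therefore $n = 13$ and $\gcd(m,13) = 1$.

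Next I would bound $m$ prime-by-prime. For any odd prime $p \mid m$, full $p$-torsion lies in $\Q^{ab}$, so Corollary \ref{CPlowerbound} forces $C_p(E) \geq 3$; but $C_p(E) = 1$ for all $p \neq 13$, contradiction. So $m$ is a power of $2$. Since $C_2(E) = 1$, Lemma \ref{2powertorsion} (equivalently the first row of Table \ref{table:RDZB}) gives $E(\Q^{ab})[2^\infty] \subseteq \Z/2\Z \times \Z/2\Z$, whence $m \in \{1,2\}$. Combining, $E(\Q^{ab})_{\text{tors}} \cong \Z/13\Z$ when $m=1$ and $\Z/2\Z \times \Z/26\Z$ when $m=2$, which is exactly the claimed dichotomy.

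I do not expect a serious obstacle, since every step reduces to invoking a result already proved earlier in the paper. The only minor subtlety worth flagging is the contrast with Proposition \ref{torsionisZpZ}: for $p = 13$ the modular curve $X_0(13)$ has genus $0$, so there are infinitely many $j$-invariants admitting a $13$-isogeny and one cannot rule out the $\Z/2\Z \times \Z/26\Z$ possibility by a finite Magma check on representatives, as was done for $p \in \{11, 17, 19, 37, 43, 67, 163\}$. Both outcomes must therefore be retained in the statement, and explicit examples realizing each will be deferred to Section \ref{sec:examples}.
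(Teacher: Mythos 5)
Your proof is correct and takes essentially the same route as the paper: Lemma \ref{isogeny} together with the absence of rational cyclic isogenies of degree properly divisible by $13$ pins the group down to $\Z/m\Z\times\Z/13m\Z$, and Theorem \ref{8qisogs}, Corollary \ref{CPlowerbound}, and Lemma \ref{2powertorsion} force $m\in\{1,2\}$. Only your closing remark is slightly off: the paper does not realize $\Z/2\Z\times\Z/26\Z$ by an example in Section \ref{sec:examples}, but instead rules it out later in Proposition \ref{prop:noz2z26} via a rank-zero modular curve, so the genus-$0$ issue you flag is handled there rather than left open.
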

\begin{proof}
Since there are no curves over $\Q$ with rational isogenies of degree properly divisible by 13, it follows from Lemma \ref{isogeny} that $E(\Q^{ab})_{\text{tors}} \cong \Z/m\Z \times \Z/m\Z \times \Z/13\Z$ for some $m \geq 1$. However, by Theorem \ref{8qisogs} we have that $C_p(E) = 1$ for all primes $p \neq 13$. Thus by Corollary \ref{CPlowerbound} and Lemma \ref{2powertorsion} we have that $m=1 \text{ or } 2$.
\end{proof}

Note that from here a quick count of the possible sizes of the torsion subgroups here along with Lemma \ref{isogimpliespoint} for the example of \lmfdbec{26569}{a}{1} having a point of order 163 over $\Q^{ab}$ is already enough to prove Corollary \ref{sizebound}.

\section{Eliminating Possible Torsion}\label{sec:class}

We restate the classification theorem for convenience:

\begin{theorem}
Let $E/\mathbb{Q}$ be an elliptic curve.  Then $E(\Q^{ab})_{\textup{tors}}$ is isomorphic to one of the following groups:
$$
\begin{array}{lr}
\Z/N_1\Z, & N_1 = 1, 3, 5, 7, 9, 11, 13, 15, 17, 19, 21, 25, 27, 37, 43, 67, 163, \\
\Z/2\Z \times \Z/2N_2\Z, & N_2 = 1, 2, \ldots, 9,\\
\Z/3\Z \times \Z/3N_3\Z, & N_3 = 1, 3,\\
\Z/4\Z \times \Z/4N_4\Z, & N_4 =  1, 2, 3, 4,\\
\Z/5\Z \times \Z/5\Z, & \\
\Z/6\Z \times \Z/6\Z, &  \\
\Z/8\Z \times \Z/8\Z. & \\
\end{array}
$$
Each of these groups appear as $E(\Q^{ab})_{\textup{tors}}$ for some elliptic curve over $\Q$.
\end{theorem}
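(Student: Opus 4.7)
My plan is to combine the upper bounds of Section \ref{sec:bounding} with explicit realizations. Theorem \ref{isogoverQ} together with Theorem \ref{8qisogs} partitions elliptic curves $E/\Q$ by their collection of $\Q$-rational isogenies, and Propositions \ref{prop:noisogenies}, \ref{torsionisZpZ}, \ref{prop:2isogeny}, \ref{prop:3isogeny}, \ref{prop:5isogeny}, \ref{prop:7isogeny}, and \ref{prop:13isogeny} together cover all the resulting cases. Taking the union of their bounds yields a finite list of candidate isomorphism classes for $E(\Q^{ab})_{\text{tors}}$. Most candidates already appear in the classification; the ones that must still be ruled out are
\[
\Z/3\Z \times \Z/27\Z,\quad \Z/6\Z \times \Z/12\Z,\quad\text{and}\quad \Z/2\Z \times \Z/2k\Z \text{ for } k \in \{10, 12, 14, 15, 18, 25\}.
\]

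To eliminate each of these, I would exploit the fact that the hypothetical torsion forces $E$ onto a specific low-genus modular curve $X_0(N)$ with $N \in \{14, 15, 21, 25, 27\}$, each of which has only finitely many rational $j$-invariants (coming either from CM or from a known short list). By Lemma \ref{lemma:twists}, this reduces the problem to checking one representative per quadratic twist family. For CM candidates, Table 1 of \cite{gonzlozano} rules out abelianness of $\Q(E[n])$ at the required $n$; for the remaining finitely many $j$-invariants, I would compute the $n$-division field directly via the $n$-th division polynomial (using Lemma \ref{halvingpoint} when $n=4$) and compare its intersection with $\Q^{ab}$ to the hypothetical structure. The candidate $\Z/6\Z \times \Z/12\Z$ gets an additional handle from Lemma \ref{full6torsion}: full 6-torsion forces $\Gal(\Q(E[2])/\Q) \cong \Z/2\Z$, i.e.\ a single rational 2-torsion point, which combined with $C_3(E)=3$ from Proposition \ref{prop:3isogeny} narrows $E$ to a very small explicit family on $X_0(15)$ that can be checked directly.

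The realizability half is simply a matter of exhibiting, for each group $T$ listed in the theorem, an explicit $E/\Q$ with $E(\Q^{ab})_{\text{tors}} \cong T$; this will be the content of Section \ref{sec:examples}. The hardest step will be the elimination phase above: each borderline candidate is ruled out not by a single uniform argument but by a different combination of the CM classification, the finiteness of rational points on a specific $X_0(N)$, the 2-adic image data of \cite{2adicimage}, and the abelian constraints from Theorem \ref{fulltorsionabelian} together with Lemma \ref{full6torsion}. Producing these case-by-case exclusions, and in particular verifying them computationally for each surviving twist family, is where the bulk of the remaining work lies.
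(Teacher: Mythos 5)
Your overall architecture matches the paper's: isogeny-based upper bounds from Section \ref{sec:bounding}, case-by-case elimination of the surviving candidates, and explicit examples for realizability (Section \ref{sec:examples}). But the elimination phase, which you correctly identify as the hard part, has two concrete gaps. First, your list of groups still to be ruled out omits $\Z/2\Z \times \Z/26\Z$ (i.e.\ $k=13$), which survives Proposition \ref{prop:13isogeny} and does not appear in the classification; it must be excluded separately (the paper does this in Proposition \ref{prop:noz2z26} by showing that the curve $Y^2 = h(h^2+6h+13)$, parametrizing curves with a $13$-isogeny and square discriminant, has only cuspidal rational points).

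Second, and more seriously, your proposed mechanism --- that each remaining candidate forces $E$ onto an $X_0(N)$ with only finitely many rational $j$-invariants, reducing everything to a finite computation over twist families --- fails for most of the cases. The groups $\Z/2\Z\times\Z/2k\Z$ with $k = 10, 12, 13, 18, 25$ correspond to isogeny degrees $10, 12, 13, 18, 25$, and $X_0(N)$ for each of these $N$ has genus $0$, hence infinitely many rational $j$-invariants; no finite check is possible. The paper instead constructs, for each such case, a modular curve of mixed level structure: it parametrizes the family by the hauptmodul $h$ of $X_0(N)$, imposes the condition for the extra $2$-power torsion to lie in $\Q^{ab}$ (via Lemma \ref{z2xz4xzn} or squareness of the discriminant), and obtains explicit curves --- e.g.\ \lmfdbec{20}{a}{1}, \lmfdbec{40}{a}{1}, \lmfdbec{48}{a}{1}, \lmfdbec{24}{a}{1}, and two genus-$3$ hyperelliptic curves mapped onto rank-$0$ elliptic curves --- whose rational points are then shown to be cuspidal. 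This construction is the technical core of Section \ref{sec:class} and is absent from your proposal. Two smaller issues: your treatment of $\Z/6\Z\times\Z/12\Z$ invokes $X_0(15)$, which is irrelevant since $5 \nmid 72$; the actual obstruction comes from Lemma \ref{halvingpoint} together with the cyclic-quartic analysis forcing $j=78608$ (which admits no $3$-isogeny). And for $\Z/3\Z\times\Z/27\Z$ the relevant curves have $j=0$, where quadratic twisting does not reduce to finitely many families, which is why the paper uses the $\mathcal{O}_K$-module argument of Theorem \ref{OKmodstructure} rather than a twist-by-twist computation.
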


We now eliminate the possibility of many of the groups appearing in the previous propositions as possible torsion subgroups over $\Q^{ab}$ for some elliptic curve $E/\Q$. We begin with a simple observation about 2-torsion over $\Q^{ab}$ from Proposition \ref{R-DZB}.

\begin{lemma}\label{lem:nosingle2torsion}
Let $E/\Q$ be an elliptic curve.  If $E(\Q^{ab})[2] \neq \{ \mathcal{O} \}$ then $E(\Q^{ab})[2] \cong \Z/2\Z \times \Z/2\Z$. Thus $E(\Q^{ab}){\emph{tors}} \not\cong \Z/2N\Z$ for any $N \geq 1$.
\end{lemma}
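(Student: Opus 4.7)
The plan is to obtain both conclusions as a direct inspection of Table \ref{table:RDZB} in Proposition \ref{R-DZB}, which enumerates every possibility for the 2-primary part $E(\Q^{ab})[2^\infty]$.

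First I would observe that the lemma is purely a statement about $E(\Q^{ab})[2^\infty]$, since $E(\Q^{ab})[2]$ is recovered as the 2-torsion of $E(\Q^{ab})[2^\infty]$, and $E(\Q^{ab})_{\text{tors}} \cong \Z/2N\Z$ would force $E(\Q^{ab})[2^\infty] \cong \Z/2^k\Z$ for the 2-part of $2N$. So by Proposition \ref{R-DZB}, $E(\Q^{ab})[2^\infty]$ is isomorphic to one of the nine groups listed in the table: $\{\mathcal{O}\}$, $\Z/2\Z \times \Z/2\Z$, $\Z/2\Z \times \Z/4\Z$, $\Z/2\Z \times \Z/8\Z$, $\Z/4\Z \times \Z/4\Z$, $\Z/2\Z \times \Z/16\Z$, $\Z/4\Z \times \Z/8\Z$, $\Z/4\Z \times \Z/16\Z$, or $\Z/8\Z \times \Z/8\Z$.

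Next I would compute the 2-torsion subgroup of each of these nine groups. In every case that subgroup is either trivial or $\Z/2\Z \times \Z/2\Z$; none of the groups on the list has 2-torsion isomorphic to $\Z/2\Z$. This immediately yields the first assertion: if $E(\Q^{ab})[2] \neq \{\mathcal{O}\}$ then $E(\Q^{ab})[2] \cong \Z/2\Z \times \Z/2\Z$.

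For the second assertion, suppose for contradiction that $E(\Q^{ab})_{\text{tors}} \cong \Z/2N\Z$ for some $N \geq 1$. Then $E(\Q^{ab})$ contains a point of order $2$, so $E(\Q^{ab})[2] \neq \{\mathcal{O}\}$, and by the first part $E(\Q^{ab})[2] \cong \Z/2\Z \times \Z/2\Z$. But the cyclic group $\Z/2N\Z$ has a unique subgroup of order $2$, contradicting the existence of a Klein four subgroup. There is no real obstacle here: the entire argument is a one-line consequence of inspecting Table \ref{table:RDZB}, which is where all the work has already been done.
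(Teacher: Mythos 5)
Your proposal is correct and follows exactly the paper's route: the lemma is presented there as a direct observation from Table \ref{table:RDZB}, namely that every group listed for $E(\Q^{ab})[2^{\infty}]$ has $2$-torsion either trivial or isomorphic to $\Z/2\Z \times \Z/2\Z$, which rules out a cyclic $2$-part. Your explicit deduction of the second assertion from the uniqueness of the order-$2$ subgroup in $\Z/2N\Z$ is the same one-line argument the paper leaves implicit.
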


This eliminates many possible torsion structures over $\Q^{ab}$. In particular, after we combine the possibilities for $E(\Q^{ab})_{\text{tors}}$ from Propositions \ref{prop:13isogeny}, \ref{prop:7isogeny}, \ref{prop:5isogeny}, \ref{prop:3isogeny}, and \ref{prop:2isogeny}, and eliminate those groups ruled out by Lemma \ref{lem:nosingle2torsion}, we can compare them to the classification in Theorem \ref{classification} to see that it remains to rule out the following groups as possibilities for $E(\Q^{ab})_{\text{tors}}$:

$$
\begin{array}{lr}
\Z/2\Z \times \Z/2N_2\Z, & N_2 = 10, 12, 13, 14, 15, 18, 25, \\
\Z/3\Z \times \Z/27\Z, &\\
\Z/6\Z \times \Z/12\Z. & \\
\end{array}
$$

\begin{proposition}
Let $E/\Q$ be an elliptic curve, then $E(\Q^{ab})_{\textup{tors}}$ is not isomorphic to $\Z/2\Z \times \Z/28\Z$ or $\Z/2\Z \times \Z/30\Z$.
\end{proposition}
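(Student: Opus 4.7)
The plan is to exploit the fact that each of these putative torsion groups forces $E$ to carry a cyclic $\Q$-rational isogeny of a specific composite degree, whose parameterizing modular curve has genus $1$ and only finitely many rational points. This reduces the problem to checking a short explicit list of $j$-invariants by direct computation.

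Suppose first that $E(\Q^{ab})_{\textup{tors}} \cong \Z/2\Z \times \Z/28\Z$. Then Lemma \ref{isogeny}, applied with $m=2$ and $mn=28$, forces $E$ to admit a cyclic $14$-isogeny over $\Q$; analogously, in the case $E(\Q^{ab})_{\textup{tors}} \cong \Z/2\Z \times \Z/30\Z$, Lemma \ref{isogeny} forces a cyclic $15$-isogeny. Both modular curves $X_0(14)$ and $X_0(15)$ have genus $1$ and only finitely many $\Q$-rational points, yielding a finite list of $j$-invariants, none of which equals $0$ or $1728$ (these lists are classical). By Lemma \ref{lemma:twists}, $E(\Q^{ab})_{\textup{tors}}$ depends only on the quadratic twist class, so I would fix a single representative for each $j$-invariant in each list. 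As a sanity check, in the first case Table \ref{table:RDZB} also independently forces $C_2(E) \geq 2$ (since the $2$-primary part must be $\Z/2\Z \times \Z/4\Z$), confirming consistency with the existence of the $14$-isogeny.

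For each chosen representative, I would compute $E(\Q^{ab})_{\textup{tors}}$ directly via the division polynomial method described in the proof of Proposition \ref{R-DZB}: compute the $14$th (respectively $15$th) division polynomial, extract the $x$-coordinates of points of the relevant order, lift to points on $E$, and check with Magma whether their field of definition is abelian over $\Q$. If in no case a point of order $14$ (respectively $15$) lies in an abelian extension of $\Q$, the claimed torsion structure is ruled out. The main obstacle is not conceptual but computational bookkeeping: enumerating the rational points on $X_0(14)$ and $X_0(15)$, selecting one curve per twist class, and running the division-polynomial verification. Because each of these lists is short, the verification is routine and completes the proof.
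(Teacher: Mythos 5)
Your overall strategy is the same as the paper's: Lemma \ref{isogeny} forces a cyclic $14$- (resp.\ $15$-) isogeny, $X_0(14)$ and $X_0(15)$ have genus one with finitely many rational points, so there are finitely many $j$-invariants, and Lemma \ref{lemma:twists} reduces to one representative per $j$-invariant. The gap is in your verification step. You propose to compute the $14$th (resp.\ $15$th) division polynomial and check whether any point of order $14$ (resp.\ $15$) is defined over an abelian extension, concluding ``if in no case'' such a point exists. But such points always exist: by Lemma \ref{isogimpliespoint}, a $\Q$-rational $14$-isogeny already produces a point of order $14$ over $\Q^{ab}$ (indeed Table \ref{table:finite} records $E(\Q^{ab})_{\textup{tors}} \cong \Z/2\Z\times\Z/14\Z$ for these curves, and $\Z/15\Z$ for the $15$-isogeny curves). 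So your test can never come back negative and therefore never rules anything out. More fundamentally, the $14$th and $15$th division polynomials carry no information about points of order $4$, $28$, $2$, or $30$, which is precisely what separates the target groups from the torsion these curves are already known to have.

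The repair is to test the right orders. To exclude $\Z/2\Z\times\Z/28\Z$, show that the curves with a $14$-isogeny (two $j$-invariants, $-3^3 5^3$ and $3^3 5^3 17^3$) have no point of order $4$ defined over an abelian extension, e.g.\ via the $4$th division polynomial or Lemma \ref{z2xz4xzn}. To exclude $\Z/2\Z\times\Z/30\Z$, show that the curves with a $15$-isogeny (four $j$-invariants) have no point of order $2$ over an abelian extension; note that by Lemma \ref{lem:nosingle2torsion} even a single such point would force full $2$-torsion over $\Q^{ab}$, so checking order $2$ suffices. This is exactly the computation the paper carries out.
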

\begin{proof}
In the case $\Z/2\Z \times \Z/28\Z$ the curve has a 14-isogeny by Lemma  \ref{isogeny} of which there are only two possible isomorphism classes of curves given by the $j$-invariants $-3^3 5^3$ and $3^3 5^3 17^3$ (see for instance Table 4 of \cite{lozanorobledo1}).  Using division polynomials we can check that in both cases there are no points of order 4 defined over an abelian extension of $\Q$.

In the case $\Z/2\Z \times \Z/30\Z$ the curve has a 15-isogeny by Lemma \ref{isogeny}.  Here there are four possible $j$-invariants, $-\frac{5^2}{2}$, $-\frac{5^2\cdot 241^3}{2^3}$, $-\frac{5\cdot 29^3}{2^5}$, and $\frac{5\cdot 211^3}{2^{15}}$.  Again using division polynomials we can check that none of these curves have a point of order 2 defined over an abelian extension of $\Q$.
\end{proof}

\begin{proposition}\label{prop:noz2z26}
Let $E/\Q$ be an elliptic curve, then $E(\Q^{ab})_{\textup{tors}} \not\cong \Z/2\Z \times \Z/26\Z$.
\end{proposition}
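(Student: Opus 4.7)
Suppose for contradiction that $E(\Q^{ab})_{\text{tors}} \cong \Z/2\Z \times \Z/26\Z$. Since $\Q^{ab}/\Q$ is Galois, Lemma \ref{isogeny} produces a $\Q$-rational $13$-isogeny on $E$, and Theorem \ref{8qisogs} then forces $C_p(E)=1$ for every prime $p\neq 13$; in particular $C_2(E)=1$. But the $\Z/2\Z$ factor of the torsion puts all of $E[2]$ inside $E(\Q^{ab})$, so the row $E(\Q^{ab})[2^{\infty}]=\Z/2\Z\times\Z/2\Z$ with $C_2(E)=1$ of Table \ref{table:RDZB} applies: the image of $\bar\rho_{E,2}$ in $\GL_2(\F_2)=S_3$ must be the unique nontrivial abelian subgroup of $S_3$ with no fixed nonzero vector, namely $A_3\cong\Z/3\Z$. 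Equivalently, $\Q(E[2])/\Q$ is a cyclic cubic extension, and a direct calculation with the standard model $E_j:y^2=x^3-\tfrac{3j}{j-1728}\,x-\tfrac{2j}{j-1728}$ gives $\Delta(E_j)=\tfrac{(1728 j)^2}{(j-1728)^3}$. Since the square class of $\Delta$ is invariant under quadratic twists (they multiply $\Delta$ by a $6$th power), the hypothesis translates to $j(E)-1728\in(\Q^{\ast})^2$.

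The problem has therefore been reduced to the following statement: no non-cuspidal $t\in X_0(13)(\Q)$ yields a $j$-value with $j(t)-1728\in(\Q^{\ast})^2$. The curve $X_0(13)$ has genus zero with an explicit parametrization $X_0(13)\cong\P^1_\Q$ by a Hauptmodul $t$ and $j(t)$ a known rational function, so the new condition cuts out a double cover $Y\to X_0(13)$. After clearing denominators and extracting square factors from the numerator of $j(t)-1728$, the cover $Y$ is a hyperelliptic curve over $\Q$ of the form $u^2=h(t)$ with $h\in\Q[t]$ squarefree; equivalently, $Y$ is the fiber product of $X_0(13)$ with the double cover of the $j$-line parametrizing elliptic curves whose mod-$2$ image lies in $A_3$.

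The hard part is then an explicit finite computation: determine $h(t)$, compute the genus $g(Y)$, and enumerate $Y(\Q)$. One anticipates $\deg h\geq 5$ (the numerator of $j(t)-1728$ has multiple simple roots over $\overline\Q$, coming from the non-cuspidal fibers where $j=1728$), so that $g(Y)\geq 2$ and $Y(\Q)$ is finite by Faltings' theorem. In practice, Magma's \texttt{Chabauty} and \texttt{Points} routines, possibly augmented by a mod-$p$ sieve, suffice to list $Y(\Q)$ explicitly. Verifying that every $\Q$-rational point of $Y$ lies above one of the two cusps $t=0,\infty$ of $X_0(13)$ (and hence does not come from a genuine elliptic curve with a $13$-isogeny) produces the contradiction.
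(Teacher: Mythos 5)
Your argument is essentially the paper's: reduce to the statement that an elliptic curve with a $13$-isogeny and full $2$-torsion over $\Q^{ab}$ must have mod-$2$ image $A_3$, hence square discriminant, and then pull this square condition back along the degree-$14$ map $j\colon X_0(13)\cong \PP^1 \to X(1)$ to get a double cover whose rational points must all be cuspidal. (The paper phrases the square condition via $\operatorname{Disc}(E')$ for the standard model rather than via $j-1728$, but since twisting changes $\Delta$ by a sixth power these are the same condition, and your identity $\Delta(E_j)=(1728j)^2/(j-1728)^3$ is correct.) The one concrete inaccuracy is your prediction that the cover $Y\colon u^2=h(t)$ has $\deg h\ge 5$ and genus $\ge 2$: above $j=1728$ the map $X_0(13)\to X(1)$ is ramified at all but the $\nu_2(13)=2$ elliptic points of order $2$, so the squarefree part of the numerator of $j(t)-1728$ is only quadratic, and together with the factor $t$ from the denominator one gets the genus-one curve $u^2=t(t^2+6t+13)$. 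Consequently Faltings/Chabauty is not the right finiteness input; what saves the argument is that this elliptic curve has Mordell--Weil rank $0$ with $Y(\Q)\cong\Z/2\Z$, both points lying over the cusps $t=0,\infty$. So your reduction is sound and the endgame computation succeeds, but for a different (and easier) reason than the one you anticipated; as written, the justification for finiteness of $Y(\Q)$ is the only step that does not go through verbatim.
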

\begin{proof}
Suppose that $E(\Q^{ab})_{\text{tors}} \cong \Z/2\Z \times \Z/26\Z$. Then $E$ has a 13-isogeny, and so by \cite{lozanorobledo1} Table 3 the curve has a $j$-invariant of the form:
\[ j(E) = \dfrac{(h^2+5h+13)(h^4+7h^3+20h^2+19h+1)^3}{h} \]
for some $h \in \Q$ with $h \neq 0$.  Thus, $E$ must be a twist of the curve
\[ E' : y^2 + xy = x^3 - \dfrac{36}{j(E)-1728}x - \dfrac{1}{j(E)-1728}. \]
Since $E(\Q^{ab})[2^{\infty}] \cong \Z/2\Z \times \Z/2\Z$ but $E$ does not have any 2-isogenies by Theorem \ref{8qisogs}, we must have $\Gal(\Q(E[2])/\Q) \cong \Z/3\Z$, implying that the discriminant of $E$ is a square. Since $E$ is a twist of $E'$, the discriminant of $E$ differs from the discriminant of $E'$ by at most a square.  Thus, we obtain a formula  $y^2 = \operatorname{Disc}(E')$, which we compute in terms of $h$.  By absorbing squares into the $y^2$ term we obtain a curve
\[C : Y^2 = h(h^2+6h+13)\]
which is a modular curve describing precisely when $E$ has a 13-isogeny and $\Gal(\Q(E[2])/\Q) \cong \Z/3\Z$.  This curve is actually an elliptic curve with $C(\Q) = \{ ( 0 : 0 : 1 ), (0 : 1 : 0) \} \cong \Z/2\Z$, both points being cusps.  Therefore there are no elliptic curves with $E(\Q^{ab}) \cong \Z/2\Z \times \Z/26\Z$.
\end{proof}

\begin{proposition}
Let $E/\Q$ be an elliptic curve, then $E(\Q^{ab})_{\textup{tors}} \not\cong \Z/2\Z \times \Z/50\Z$.
\end{proposition}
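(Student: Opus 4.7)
The plan is to mirror exactly the argument of Proposition \ref{prop:noz2z26}. Assume for contradiction that $E(\Q^{ab})_{\textup{tors}} \cong \Z/2\Z \times \Z/50\Z$. By Lemma \ref{isogeny}, $E$ must possess a 25-isogeny over $\Q$. The analysis inside Proposition \ref{prop:5isogeny} shows that possession of a 25-isogeny forces $C_5(E)=3$ and, via Theorem \ref{8qisogs}, $C_p(E)=1$ for every prime $p\neq 5$. In particular $C_2(E)=1$, so $E$ has no $\Q$-rational 2-torsion, yet the hypothesis forces $E(\Q^{ab})[2]\cong (\Z/2\Z)^2$. Consequently the 2-division polynomial of $E$ is irreducible over $\Q$ and its splitting field is abelian; the only transitive abelian subgroup of $S_3$ is the cyclic group of order $3$, so $\Gal(\Q(E[2])/\Q)\cong\Z/3\Z$, which forces $\operatorname{Disc}(E)\in (\Q^\times)^2$.

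Next, I would invoke the fact that $X_0(25)$ has genus $0$ and a rational point, so its $j$-line admits a one-parameter rational parametrization $j(E)=j(h)$ for $h\in\Q$ (excluding the two cusps). Fixing a universal Weierstrass model $E_h$ with this $j$-invariant, any elliptic curve $E/\Q$ with a 25-isogeny is a quadratic twist of some $E_h$, and the twist operation changes the discriminant only by a square. Hence the condition $\operatorname{Disc}(E)\in(\Q^\times)^2$ is equivalent to $\operatorname{Disc}(E_h)\in(\Q^\times)^2$, and, after absorbing square factors into $y$, to the existence of a rational point on an auxiliary curve
\[
C\colon y^2 = f(h),
\]
where $f(h)$ is the squarefree part of $\operatorname{Disc}(E_h)$ written as a polynomial in $h$. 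This curve $C$ is the modular curve parametrizing elliptic curves over $\Q$ with a 25-isogeny and 2-division field having Galois group $\Z/3\Z$ over $\Q$.

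Finally, I would compute $C$ explicitly using Magma, determine its genus, and classify $C(\Q)$. The expectation, in analogy with the previous proposition, is that $C$ is of positive genus and that $C(\Q)$ consists entirely of points lying above the cusps of $X_0(25)$ (or of points yielding $j$-invariants for which a direct check on the resulting finitely many elliptic curves rules out the torsion $\Z/2\Z\times\Z/50\Z$), giving the required contradiction.

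The main obstacle will be the rigorous determination of $C(\Q)$. If $C$ turns out to be an elliptic curve of rank $0$ the calculation is routine, as in Proposition \ref{prop:noz2z26}. If instead $C$ has genus $\geq 2$ or positive-rank Jacobian a more elaborate tool, such as a Mordell--Weil sieve or an elliptic-curve Chabauty computation over a quadratic field, will likely be needed to certify that no non-cuspidal rational points arise.
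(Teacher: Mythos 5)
Your setup is exactly the paper's: from a $25$-isogeny Kenku's bounds force $C_2(E)=1$, so full $2$-torsion over $\Q^{ab}$ without a rational $2$-torsion point forces $\Gal(\Q(E[2])/\Q)\cong\Z/3\Z$ and hence a square discriminant, and this condition pulled back along the genus-zero parametrization of $X_0(25)$ gives a hyperelliptic modular curve $C\colon Y^2=f(h)$ whose rational points must all be cuspidal. The one thing you have not actually done is the decisive step: determining $C(\Q)$. In the paper the curve comes out as $Y^2=h^7+9h^5+25h^3-11h^2+20h-44$, a genus $3$ hyperelliptic curve, and its rational points are found not by Chabauty or a Mordell--Weil sieve but by writing down an explicit degree-$3$ map $\pi$ from $C$ to the rank-$0$ elliptic curve $\tilde{C}\colon y^2=x^3+x^2-x$ (Cremona label 20a2, with $\tilde{C}(\Q)\cong\Z/6\Z$) and computing the preimages of its six rational points; only $(1:0:1)$ and the point at infinity survive, and both correspond to cusps ($h=1$ and $h=\infty$). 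So your plan is the right one and your contingency list contains the tool that works (a cover of a rank-$0$ elliptic curve), but as written the argument is a program rather than a proof: until you exhibit the auxiliary curve, verify its genus, find the quotient map, and check that all rational points are cuspidal, the contradiction has not been established.
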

\begin{proof}
Suppose that $E(\Q^{ab})_{\text{tors}} \cong \Z/2\Z \times \Z/50\Z$. Then $E$ has a 25-isogeny, and so by \cite{lozanorobledo1} Table 3 the curve has a $j$-invariant of the form:
\[ j(E) = \dfrac{(h^{10}+10h^8+35h^6-12h^5+50h^4-60h^3+25h^2-60h+16)^3}{(h-1)(h^4 + h^3 + 6h^2 + 6h + 11)} \]
for some $h \in \Q$ with $h \neq 1$  By a similar argument made in Proposition \ref{prop:noz2z26} we have that the discriminant of $E$ must be a square. We again obtain a formula  $y^2 = \operatorname{Disc}(E)$ and by absorbing squares into the $y^2$ term we obtain a curve
\[C : Y^2 = h^7 + 9h^5+25h^3-11h^2+20h-44\]
which is a modular curve describing precisely when $E$ has a 25-isogeny and $\Gal(\Q(E[2])/\Q) \cong \Z/3\Z$.  This is a genus 3 hyperelliptic curve.

We can construct a map $\pi$ from $C$ to an elliptic curve $\tilde{C} : y^2 = x^3 + x^2 - x$ given by

\begin{equation}\label{eqn:z2z50}
(x : y : z) \mapsto (x^3 - x^2z + 4xz^2 - 4z^3: yz^2: x^2z - 2xz^2 + z^3).\tag{$\ast$}
\end{equation}

The curve $\tilde{C}$ has Cremona label \lmfdbec{20}{a}{2} and rank 0 and torsion isomorphic to $\Z/6\Z$. It has rational points
\[
\tilde{C}(\Q) = \{ (0 : 1 : 0), (0 : 0 : 1), (-1:-1:1), (1: -1: 1), (-1:1:1), (1:1:1)\}
\]
and we can use ($\ast$) to explicitly compute the preimage of each point under $\pi$ to see that the only rational points on $C$ are $(1 : 0 : 1)$ and $(0:1:0)$. Note that we have $h = \frac{X}{Z}$ for points $(X : Y : Z) \in C$. The first points corresponds to $h=1$, which is a zero of the denominator of $j(E)$, and the second point is the point at infinity, which corresponds to $h=\infty$, which is not a value we can consider. Thus, both of these points are cusps, and therefore there are no elliptic curves with $E(\Q^{ab}) \cong \Z/2\Z \times \Z/50\Z$.
\end{proof}

\begin{proposition}
Let $E/\Q$ be an elliptic curve, then $E(\Q^{ab})_{\textup{tors}} \not\cong \Z/2\Z \times \Z/20\Z$.
\end{proposition}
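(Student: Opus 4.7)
The plan is to derive a contradiction from the assumption that $E(\Q^{ab})_{\text{tors}} \cong \Z/2\Z \times \Z/20\Z$ by constructing and analyzing an explicit modular curve, mirroring the strategy used for $\Z/2\Z \times \Z/50\Z$ and $\Z/2\Z \times \Z/26\Z$. First, Lemma \ref{isogeny} forces $E$ to admit a rational $10$-isogeny, and hence both a $2$-isogeny and a $5$-isogeny. Tracing through the case analysis of Proposition \ref{prop:5isogeny} and invoking Lemma \ref{2powertorsion} (which requires $C_2(E) \geq 2$ to see a point of order $4$ over $\Q^{ab}$), the only consistent scenario is $C_5(E)=2$, $C_3(E)=1$, and $C_2(E)=2$. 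In particular $E$ has a unique $\Q$-rational point of order $2$, which we translate to $(0,0)$ to obtain a model
\[
E : y^2 = x(x^2 + bx + d), \qquad b,d \in \Q.
\]

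Second, since $E(\Q^{ab})[2^{\infty}] \cong \Z/2\Z \times \Z/4\Z$, the curve $E$ has a point of order $4$ over $\Q^{ab}$ but no full $4$-torsion over $\Q^{ab}$, so Lemma \ref{z2xz4xzn} requires that either $d$ or $(b^2-4d)d$ be a non-zero square in $\Q$. Using the parametrization of $X_0(10)$ from \cite{lozanorobledo1} Table 3, I would write $j(E)$ as a rational function of a single parameter $h$ and, after twisting (which by Lemma \ref{lemma:twists} leaves $E(\Q^{ab})_{\text{tors}}$ invariant), produce explicit rational functions $b(h)$ and $d(h)$. Imposing each of the two square conditions then cuts out modular curves of the shape $Y^2 = f_i(h)$ whose rational points parametrize elliptic curves over $\Q$ with a $10$-isogeny and the required $4$-power torsion behavior over $\Q^{ab}$.

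The main obstacle will be the explicit determination of the rational points on these two candidate curves. I expect each to be hyperelliptic of small genus, so the attack will parallel the $\Z/2\Z \times \Z/50\Z$ argument: exhibit a dominant map to a rank-zero elliptic curve whose Mordell--Weil group is already tabulated (e.g.\ in the LMFDB), and pull back the finite set of rational points via an explicit formula analogous to equation $(\ast)$ in the preceding proposition. The delicate step is constructing a useful quotient map and verifying that every preimage is either a cusp of $X_0(10)$ (so that the denominator of $j(h)$, $b(h)$, or $d(h)$ vanishes) or else corresponds, by a direct Magma \cite{magma} computation on the associated curve, to an elliptic curve whose torsion over $\Q^{ab}$ is strictly smaller than $\Z/2\Z \times \Z/20\Z$. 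Once each rational point is eliminated in this way, the contradiction is complete.
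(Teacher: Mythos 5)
Your proposal is correct and follows essentially the same route as the paper: parametrize curves with a $10$-isogeny via $X_0(10)$, normalize the model so the rational $2$-torsion point is at $(0,0)$, apply Lemma \ref{z2xz4xzn} to get the two square conditions on $d(h)$ and $(b(h)^2-4d(h))d(h)$, and show all rational points on the resulting curves are cusps. The only difference is that you anticipate needing quotient maps to rank-zero elliptic curves as in the $\Z/2\Z\times\Z/50\Z$ case, whereas the two curves that actually arise ($Y^2=h^3+h^2+4h+4$ and $Y^2=h^3-3h^2-4h$, i.e.\ \lmfdbec{20}{a}{1} and \lmfdbec{40}{a}{1}) are already rank-zero elliptic curves, so the argument closes directly.
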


\begin{proof}
Suppose for the sake of contradiction that $E$ is an elliptic curve over $\Q$ such that $E(\Q^{ab})_{\textup{tors}} \cong \Z/2\Z \times \Z/20\Z$.  Then, $E$ has a $\Q$-rational 10-isogeny, and so by \cite{lozanorobledo1} Table 3 the curve has a
\[
j(E) = \dfrac{(h^6 - 4h^5 + 16h + 16)^3}{(h+1)^2(h-4)h}
\]
for some $h \in \Q$ with $h \neq -1,0,4$. Thus, $E$ must be a twist of the curve
\[ 
E' : y^2 + xy = x^3 - \dfrac{36}{j(E)-1728}x - \dfrac{1}{j(E)-1728}. 
\]
Moving the 2-torsion point to (0,0) yields a model of $E'$ of the form
\[
E' : y^2 = x^3 + b(h)x^2 + d(h)x
\]
for the rational functions
\[
b(h)=\dfrac{-9h^{12} + 72h^9 - 144h^3 - 144}{h^{12} - 8h^9 - 8h^3 - 8},
\]
\[d(h)= {\scriptstyle \frac{1296h^{27} - 19440h^{24} + 62208h^{21} + 124416h^{18} - 248832h^{15} -    622080h^{12} + 995328h^6 + 995328h^3 + 331776}{h^{36} - 24h^{33} + 192h^{30} -    464h^{27} - 720h^{24} + 2304h^{21} + 2112h^{18} + 5760h^{15} + 14400h^{12} +    11776h^9 + 12288h^6 + 12288h^3 + 4096}}.
\]
Note that $j(E) \neq 0,1728$ since $E$ has a 10-isogeny, and thus $E$ is a quadratic twist of $E'$. By Lemma \ref{lemma:twists} we have that $E'(\Q^{ab})_{\text{tors}} \cong E(\Q^{ab})_{\text{tors}} \cong \Z/2\Z \times \Z/20\Z$. Now, since $E('\Q^{ab})[2^{\infty}] \cong \Z/2\Z \times \Z/4\Z$, Lemma \ref{z2xz4xzn} tells us that either
\[d(h) \in\left(\Q^{\times}\right)^2 \hspace{1cm} \mbox{ or } \hspace{1cm} (b(h)^2 - 4d(h))d(h) \in \left(\Q^{\times}\right)^2.\]
Denote the 4-torsion point over $\Q^{ab}$ by $Q$.

Suppose $d(h) \in \left(\Q^{\times}\right)^2$.  We obtain a formula $Y^2 =  d(h)$ and by absorbing squares we obtain the curve
\[
C: Y'^2 = h^3 + h^2 + 4h + 4
\]
which is a modular curve describing precisely when $E$ has a 10-isogeny and $\Gal(\Q(x(Q))/\Q) \cong \Z/2\Z \times \Z/2\Z$. This is the elliptic curve with Cremona label \lmfdbec{20}{a}{1} and rational points 
\[
C(\Q) = \{(0:1:0), (0:-2:1), (0:2:1), (4:-10:1), (4:10:1), (-1:0:1)\} \cong \Z/6\Z.
\]
However, all of these points are cusps as they correspond to $h=0,-1,4$ which are all zeros of the denominator of $j(E)$.  Therefore there are no such elliptic curves.

Suppose instead that $(b(h)^2 - 4d(h))d(h) \in \left(\Q^{\times}\right)^2$.  Again we obtain a formula $Y^2 =  (b(h)^2 - 4d(h))d(h)$ and by absorbing squares we obtain the curve
\[
\hat{C}: Y'^2 = h^3 - 3h^2 - 4h
\]
which is a modular curve describing precisely when $E$ has a 10-isogeny and $\Gal(\Q(x(Q))/\Q) \cong \Z/4\Z$. This is an elliptic curve with  Cremona label \lmfdbec{40}{a}{1} and rational points
\[
\hat{C}(\Q) = \{(0 : 0 : 1), (0 : 1 : 0), (-1 : 0 : 1), (4 : 0 : 1) \} \cong \Z/2\Z \times \Z/2\Z.
\]
Again, all of these points are cusps as they correspond to $h=0, -1, 4$.  Therefore there are no such elliptic curves.  Thus we can conclude that no such a curve $E$ exists.
\end{proof}

\begin{proposition}
Let $E/\Q$ be an elliptic curve, then $E(\Q^{ab})_{\textup{tors}} \not\cong \Z/2\Z \times \Z/36\Z$.
\end{proposition}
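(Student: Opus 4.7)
The plan is to follow the strategy used in the preceding propositions for $\Z/2\Z \times \Z/20\Z$ and $\Z/2\Z \times \Z/50\Z$. Suppose, for contradiction, that $E/\Q$ is an elliptic curve with $E(\Q^{ab})_{\textup{tors}} \cong \Z/2\Z \times \Z/36\Z$. By Lemma \ref{isogeny}, $E$ admits an $18$-isogeny over $\Q$, which in particular yields a $\Q$-rational $2$-torsion point and a $\Q$-rational cyclic subgroup of order $9$. Since $C_3(E) \geq 3$, Theorem \ref{8qisogs} forces $C_2(E) \leq 2$, and combined with the $2$-isogeny this gives $C_2(E) = 2$, so $E$ has exactly one rational $2$-torsion point. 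Furthermore, the assumption $E(\Q^{ab})_{\textup{tors}} \cong \Z/2\Z \times \Z/36\Z$ forces $E(\Q^{ab})[2^{\infty}] \cong \Z/2\Z \times \Z/4\Z$, so $E$ has a point of order $4$ over $\Q^{ab}$ but does not have full $4$-torsion over $\Q^{ab}$.

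Since $X_0(18)$ has genus $0$, there is a rational parametrization $j(E) = j(h)$ for some $h \in \Q$ (available from Lozano-Robledo's tables or a direct Magma computation). Using Lemma \ref{lemma:twists}, we may replace $E$ by a convenient quadratic twist of the standard representative with this $j$-invariant. Moving the rational $2$-torsion point to the origin yields a model
\[
E \colon y^2 = x\bigl(x^2 + b(h) x + d(h)\bigr)
\]
for explicit rational functions $b(h), d(h) \in \Q(h)$. Lemma \ref{z2xz4xzn} then forces either $d(h) \in (\Q^{\times})^2$ or $(b(h)^2 - 4 d(h))\, d(h) \in (\Q^{\times})^2$.

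Each of these two conditions, after absorbing square factors into the new variable, produces an equation $Y^2 = f(h)$ for an explicit $f(h) \in \Q[h]$, defining a modular curve parametrizing the locus of $h$ for which an $18$-isogenous curve additionally acquires a point of order $4$ over $\Q^{ab}$ of the corresponding type. The task is then to compute these two polynomials with Magma, determine all rational points on the resulting curves, and check that each such point is a cusp of $X_0(18)$, i.e.~a value of $h$ where $j(h)$ is not the $j$-invariant of an elliptic curve over $\Q$. If the resulting curves are elliptic, their Mordell–Weil groups can be read off from the LMFDB; if they are of higher genus, one proceeds as in the $\Z/2\Z \times \Z/50\Z$ case by constructing an explicit map to a rank-$0$ elliptic quotient and pulling back its rational points.

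The main obstacle is that, because the $X_0(18)$ parametrization has moderately high degree, the polynomial $f(h)$ may have degree large enough that $Y^2 = f(h)$ is hyperelliptic of genus $\geq 2$. In that event one must either exhibit a cover by one or more rank-$0$ elliptic curves whose rational points can be enumerated, or apply Chabauty–Coleman or the Mordell–Weil sieve to rule out any non-cuspidal rational points. In line with the pattern established in the proofs for $\Z/2\Z \times \Z/20\Z$ and $\Z/2\Z \times \Z/50\Z$, one expects every rational point on each of the two curves to correspond to a cusp, delivering the desired contradiction.
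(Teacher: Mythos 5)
Your proposal follows essentially the same route as the paper: parametrize the $18$-isogeny locus by $h$, reduce to a model $y^2 = x(x^2+b(h)x+d(h))$ via Lemma \ref{lemma:twists}, apply Lemma \ref{z2xz4xzn} to get the two square conditions, and determine the rational points on the resulting curves $Y^2=f(h)$ (the paper's two cases yield the rank-$0$ elliptic curve \lmfdbec{36}{a}{1} and a genus-$3$ hyperelliptic curve handled exactly as you anticipate, by a quotient map to the rank-$0$ elliptic curve \lmfdbec{24}{a}{4}), with all rational points corresponding to cusps. The plan is correct and matches the paper's proof in both structure and the handling of the higher-genus case.
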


\begin{proof}
Suppose for the sake of contradiction that $E$ is an elliptic curve over $\Q$ such that $E(\Q^{ab})_{\textup{tors}} \cong \Z/2\Z \times \Z/36\Z$.  Then, $E$ has a $\Q$-rational 18-isogeny, and so by \cite{lozanorobledo1} Table 3 the curve has a $j$-invariant of the form
\[
j(E) = \dfrac{(h^3-2)^3(h^9-6h^6-12h^3-8)^3}{h^9(h^3-8)(h^3+1)^2}
\]
for some $h \in \Q$ with $h \neq -1, 0, 2$. Thus, $E$ must be a twist of the curve 
\[ 
E' : y^2 + xy = x^3 - \dfrac{36}{j(E)-1728}x - \dfrac{1}{j(E)-1728}. 
\]
Moving the 2-torsion point to (0,0) yields a model of $E'$ of the form
\[
E' : y^2 = x(x^2 + b(h)x + d(h))
\]
for the rational functions
\[
b(h)=\dfrac{(h^3-2)(h^9 - 6h^6 - 12h^3 - 8)}{h^{12} - 8h^9 - 8h^3 – 8},
\]
\[d(h)= \dfrac{(h+1)(h^2-h+1)(h^3-2)^2(h^9 - 6h^6 - 12h^3 - 8)^2}{(h^6 - 4h^3 - 8)^2(h^12 - 8h^9 - 8h^3 - 8)^2}.
\]
Note that $j(E) \neq 0,1728$ since $E$ has an 18-isogeny, and thus $E$ is a quadratic twist of $E'$. By Lemma \ref{lemma:twists} we have that $E'(\Q^{ab})_{\text{tors}} \cong E(\Q^{ab})_{\text{tors}} \cong \Z/2\Z \times \Z/20\Z$. Now, since $E('\Q^{ab})[2^{\infty}] \cong \Z/2\Z \times \Z/4\Z$, Lemma \ref{z2xz4xzn} tells us that either
\[d(h) \in\left(\Q^{\times}\right)^2 \hspace{1cm} \mbox{ or } \hspace{1cm} (b(h)^2 - 4d(h))d(h) \in \left(\Q^{\times}\right)^2.\]
Denote the 4-torsion point over $\Q^{ab}$ by $Q$.

Suppose $d(h)\in\left(\Q^{\times}\right)^2$.  We obtain a formula $Y^2 =  d(h)$ and by absorbing squares we obtain the curve
\[
C: Y'^2 = h^3 + 1
\]
which is a modular curve describing precisely when $E$ has a 18-isogeny and $\Gal(\Q(x(Q))/\Q) \cong \Z/2\Z \times \Z/2\Z$. This is an elliptic curve with Cremona label \lmfdbec{36}{a}{1} and rational points
\[
C(\Q) = \{(0 : 1 : 0), (0 : 1 : 1), (0 : -1 : 1), (2 : 3 : 1), (2 : -3 : 1), (-1 : 0 : 1)\} \cong \Z/6\Z.
\]
However, all of these points are cusps as they correspond to $h=-1, 0, 2$. Therefore there are no such elliptic curves.

Suppose instead that $(b(h)^2 - 4d(h))d(h) \in \left(\Q^{\times}\right)^2$.  Again we obtain a formula $Y^2 =  (b(h)^2 - 4d(h))d(h)$ and by absorbing squares we obtain the curve
\[
\hat{C}: Y'^2 = h^7 - 7h^4 - 8h
\]
which is a modular curve describing precisely when $E$ has a 18-isogeny and $\Gal(\Q(x(Q))/\Q) \cong \Z/4\Z$. This is a genus 3 hyperelliptic curve with an automorphism $\varphi$ defined by
\[
(x : y : z) \mapsto (2x^4 - 10x^3z + 12x^2z^2 + 8xz^3 - 16z^4: 36yz^3 : x^4 - 8x^3z + 24x^2z^2 - 32xz^3 + 16z^4) 
\] 
and taking the quotient of $\hat{C}$ by $\varphi$ gives a map $\pi$ from $\hat{C}$ to an elliptic curve $\hat{C}_{\varphi} : y^2 = x^3 - x^2 + x$ given by
\begin{equation}\label{eqn:z2xz36}
(x : y : z) \mapsto (xz(x^2-xz-2z^2) : yz^3 : x^2(x+z)^2).\tag{$\ast$}
\end{equation}

The curve $\hat{C}_{\varphi}$ has Cremona label \lmfdbec{24}{a}{4} and has rational points
\[
\hat{C}_{\varphi}(\Q) = \{ (0 : 1 : 0), (0 : 0 : 1), (1 : 1 : 1), (1 : -1 : 1)\}.
\]
We can use ($\ast$) to explicitly compute the preimage of each point under $\pi$ to compute the rational points on $\hat{C}$. We find that $\hat{C}(\Q) = \{(-1:0:1), (0:0:1), (2:0:1), (0:1:0) \}$. These points correspond to $h=0,-1,2$, which are zeros of the denominator of $j(E)$ and so are cusps.  Thus we can conclude that no such a curve $E$ exists.
\end{proof}

\begin{proposition}
Let $E/\Q$ be an elliptic curve, then $E(\Q^{ab})_{\textup{tors}} \not\cong \Z/6\Z \times \Z/12\Z$.
\end{proposition}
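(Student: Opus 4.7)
The plan is to reduce this case directly to the argument of the preceding proposition, since the 2-primary part of $\Z/6\Z \times \Z/12\Z$ coincides with that of $\Z/2\Z \times \Z/36\Z$, namely $\Z/2\Z \times \Z/4\Z$. The key point is to show that $E$ again has a $\Q$-rational 18-isogeny, which places $j(E)$ in the same parameterization and makes the previous argument applicable verbatim.

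First I would verify the 18-isogeny. The 3-primary part of $\Z/6\Z \times \Z/12\Z$ is $\Z/3\Z \times \Z/3\Z$, so $E$ has full 3-torsion over $\Q^{ab}$. By Corollary \ref{CPlowerbound} this forces $C_3(E) \geq 3$. The option $C_3(E) = 4$ would force $C_2(E) = 1$ by Theorem \ref{8qisogs}, contradicting Lemma \ref{2powertorsion}, which requires $C_2(E) \geq 2$ because $E(\Q^{ab})[2^{\infty}] \cong \Z/2\Z \times \Z/4\Z$ is not isomorphic to $\{\mathcal{O}\}$ or $\Z/2\Z \times \Z/2\Z$. Hence $C_3(E) = 3$, and in particular $E$ admits a $\Q$-rational cyclic subgroup of order $9$. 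Applying Lemma \ref{isogeny} with $m = 6$, $n = 2$ provides a $\Q$-rational 2-isogeny as well, and composing the 9-isogeny with the 2-isogeny yields a cyclic $\Q$-rational subgroup of order 18, i.e.\ an 18-isogeny.

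Next I would invoke the preceding proposition. Since $E$ has an 18-isogeny, $j(E)$ lies in the one-parameter family parameterized by $h$ used there; since $C_2(E) \geq 2$, the curve $E$ has a $\Q$-rational 2-torsion point, so the quadratic twist model $y^2 = x(x^2 + b(h) x + d(h))$ is available; and since the 2-primary torsion over $\Q^{ab}$ equals $\Z/2\Z \times \Z/4\Z$, Lemma \ref{z2xz4xzn} applies and gives the same two alternatives, $d(h) \in (\Q^{\times})^2$ or $(b(h)^2 - 4 d(h)) d(h) \in (\Q^{\times})^2$. These yield exactly the same two modular curves $C$ (of Cremona label \lmfdbec{36}{a}{1}) and $\hat{C}$ (the genus 3 hyperelliptic curve dominating \lmfdbec{24}{a}{4}), both of which were shown to have only cuspidal rational points. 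This contradicts the existence of $E$.

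The main obstacle, or rather the point requiring care, is conceptual: one must confirm that no step of the preceding proof exploited the specific $\Z/9\Z$-structure in the 3-primary part of $\Z/2\Z \times \Z/36\Z$ beyond extracting the 18-isogeny. Inspecting that proof, the only inputs are the presence of an 18-isogeny, the presence of a $\Q$-rational 2-torsion point, and the 2-primary structure $\Z/2\Z \times \Z/4\Z$ over $\Q^{ab}$; no use is made of the 3-primary torsion itself. Since all three inputs hold in our setting, the reduction is legitimate, and no elliptic curve $E/\Q$ has $E(\Q^{ab})_{\mathrm{tors}} \cong \Z/6\Z \times \Z/12\Z$.
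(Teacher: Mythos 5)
There is a genuine gap at the very first step, and it is fatal to the reduction. You claim that $C_3(E)=3$ implies $E$ admits a $\Q$-rational cyclic subgroup of order $9$, but the count $C_3(E)$ includes the trivial subgroup, and in your situation the two nontrivial rational cyclic $3$-power subgroups are forced to be the \emph{two independent subgroups of order $3$} coming from Corollary \ref{CPlowerbound} (full $3$-torsion over $\Q^{ab}$ puts the mod-$3$ image in a split Cartan, so exactly two, or all four, of the order-$3$ subgroups are rational). With $C_3(E)=3$ there is then no room for an order-$9$ subgroup at all; this is precisely the dichotomy recorded in the proof of Proposition \ref{prop:3isogeny}, where $C_3(E)=3$ corresponds to \emph{either} $E(\Q^{ab})[3^{\infty}]\cong \Z/3\Z\times\Z/3\Z$ \emph{or} $\Z/9\Z$, not both. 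Worse, the $9$-isogeny you want cannot exist under your hypothesis for an independent reason: by Lemma \ref{isogimpliespoint} a $9$-isogeny would produce a point of order $9$ over $\Q^{ab}$, which together with full $3$-torsion would force $\Z/3\Z\times\Z/9\Z \subseteq E(\Q^{ab})_{\mathrm{tors}}$, contradicting the assumed isomorphism with $\Z/6\Z\times\Z/12\Z$. So $E$ has no $18$-isogeny, its $j$-invariant does not lie in the $X_0(18)$ family, and the entire reduction to the $\Z/2\Z\times\Z/36\Z$ computation collapses.

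The paper's actual argument is quite different and does not pass through $X_0(18)$. It starts from Lemma \ref{full6torsion}: full $6$-torsion over $\Q^{ab}$ forces $\Gal(\Q(E[2])/\Q)\cong\Z/2\Z$, so $E$ has exactly one rational $2$-torsion point. Writing $E: y^2=x(x^2+bx+c)$, it then asks which $2$-torsion point is halved to produce the order-$4$ point over $\Q^{ab}$, and uses Lemma \ref{halvingpoint} to pin down the field $K$ of definition of that point. If $(0,0)$ is halved, $K$ abelian forces $K$ cyclic quartic (leading to $j=78608$, which has no $3$-isogeny, contradicting $C_3(E)\geq 3$) or biquadratic (forcing full $2$-torsion over $\Q$, contradicting $\Gal(\Q(E[2])/\Q)\cong\Z/2\Z$); if $(\alpha,0)$ is halved, $K=F(\sqrt{-\alpha})$ is neither cyclic nor biquadratic unless $\alpha\in\Q$, again a contradiction. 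If you want to salvage your write-up, you would need an argument along these lines rather than the $18$-isogeny route.
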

\begin{proof}
Suppose that $E(\Q^{ab})_{\text{tors}} \cong \Z/6\Z \times \Z/12\Z$.  Then by Lemma \ref{full6torsion} we have that $\Gal(\Q(E[2])/\Q) \cong \Z/2\Z$, and thus $E$ has a single non-trivial point $P$ of order 2 over $\Q$.  Take a model of $E$ of the form $E : y^2 = x(x^2 + bx + c)$.  Then $E[2] = \{ \mathcal{O}, (0,0), (\alpha, 0), (\overline{\alpha}, 0) \}$ where $\alpha$ and $\overline{\alpha}$ are roots of $x^2 + bx + c$.  Let $F = \Q(\sqrt{\Delta_E}) = \Q(E[2])$.

Since $E(\Q^{ab})[2^{\infty}] \cong \Z/2\Z \times \Z/4\Z$ we obtain a point of order 4 over $\Q^{ab}$.  Suppose $(0,0)$ is halved in $E(\Q^{ab})_{\text{tors}}$.  By Lemma \ref{halvingpoint} we have that the point of order 4 is defined over $K = F(\sqrt{2b + 2\sqrt{b^2-4c}})$.  The field $K$ must be an abelian extension of $\Q$ and so $K$ is either cyclic or biquadratic, i.e. $\Gal(K/\Q) \cong \Z/4\Z$ or $\Z/2\Z \times \Z/2\Z$ respectively.

If $K$ is cyclic quartic, then, as discussed in the proof of Lemma 4.5 in \cite{chou}, $E$ has $j$-invariant 78608.  We can check that $\Phi_3(X,78608)$ has no rational roots, where $\Phi_3(X,Y)$ denotes the third classical modular polynomial, and so no curve with j-invariant 78608 has a 3-isogeny, contradicting that $E(\Q^{ab})[3^{\infty}] \cong \Z/3\Z \times \Z/3\Z$.

If $K$ is biquadratic, then $b^2-4c$ must be a square, but then $E$ has full 2-torsion over $\Q$, contradicting the assumption that $\Gal(\Q(E[2])/\Q) \cong \Z/2\Z$.

Suppose instead that $(\alpha,0)$ or $(\overline{\alpha}, 0)$ is halved over $\Q^{ab}$, without loss of generality suppose $(\alpha,0)$ is halved.  Then by Lemma \ref{halvingpoint} we have a point of order 4 defined over $K = F(\sqrt{-\alpha})$ which, as discussed in the proof of Lemma 4.5 in \cite{chou}, is not a cyclic extension.  Also, $K$ is not biquadratic unless $\alpha \in \Q$, again implying that $E$ has full 2-torsion over $\Q$, a contradiction. Thus, there is no such curve $E$.
\end{proof}

\begin{proposition}
Let $E/\Q$ be an elliptic curve, then $E(\Q^{ab})_{\textup{tors}} \not\cong \Z/2\Z \times \Z/24\Z$.
\end{proposition}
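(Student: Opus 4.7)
The plan is to mimic the strategy of the preceding propositions for $\Z/2\Z\times\Z/20\Z$ and $\Z/2\Z\times\Z/36\Z$. Suppose for contradiction that $E(\Q^{ab})_{\text{tors}}\cong \Z/2\Z\times\Z/24\Z$. Then $E(\Q^{ab})[2^\infty]\cong \Z/2\Z\times\Z/8\Z$ and $E(\Q^{ab})[3]\cong \Z/3\Z$, so by Lemma~\ref{isogeny} the curve $E$ has a rational $12$-isogeny. Since $X_0(12)$ has genus zero, Table~3 of \cite{lozanorobledo1} provides a rational parametrization $j(E)=j(h)$ for some $h\in\Q$ avoiding the finite set of values where the denominator of $j(h)$ vanishes (the cusps). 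Because any such $E$ has $j(E)\neq 0,1728$, Lemma~\ref{lemma:twists} lets us replace $E$ by the canonical quadratic twist $E'\colon y^2+xy=x^3-\tfrac{36}{j(E)-1728}x-\tfrac{1}{j(E)-1728}$; after moving the rational $2$-torsion point to the origin we obtain a model
\[
E'\colon y^2 = x\bigl(x^2+b(h)x+d(h)\bigr)
\]
with $b(h),d(h)\in\Q(h)$ explicit rational functions.

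Next, since $E'(\Q^{ab})[2^\infty]\cong \Z/2\Z\times\Z/8\Z$ contains a point of order $4$ but not full $4$-torsion, Lemma~\ref{z2xz4xzn} forces either $d(h)\in(\Q^\times)^2$ or $(b(h)^2-4d(h))\,d(h)\in(\Q^\times)^2$. Each condition, after absorbing square factors into the $Y$ variable, produces an affine equation $Y^2=f_i(h)$ defining a modular curve $C_i$ ($i=1,2$) whose non-cuspidal rational points parametrize elliptic curves $E/\Q$ with a $12$-isogeny and the prescribed mod-$4$ Galois behavior. The goal is to show in each case that $C_i(\Q)$ consists only of cusps, i.e.\ only of points whose $h$-coordinate is a root of the denominator of $j(h)$ or the point at infinity.

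When $C_i$ is of low genus (elliptic or rational), the argument closes directly as in the $\Z/2\Z\times\Z/20\Z$ proof by identifying $C_i$ with a curve of known Cremona label and finite Mordell--Weil group and checking that all rational points are cuspidal. If instead $C_i$ is a hyperelliptic curve of higher genus, I will, as in the $\Z/2\Z\times\Z/36\Z$ proof, exhibit an involution $\varphi$ on $C_i$ whose quotient $C_i/\langle\varphi\rangle$ is an elliptic curve of rank $0$ over $\Q$; pulling back the finitely many rational points of the quotient via an explicit map $\pi$ then enumerates $C_i(\Q)$, and a case-by-case inspection of the resulting $h$-values should reveal that each is a cusp. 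The main obstacle I anticipate is the algebraic one: writing $b(h),d(h)$ cleanly from the $12$-isogeny parametrization and then either recognizing the resulting $C_i$ as a curve in the LMFDB or, in the hyperelliptic case, guessing the correct involution to reduce to a rank-zero elliptic quotient. Should both direct geometric approaches prove too intricate, a fallback is to enumerate, using Magma, the subgroups of $\GL(2,\Z/24\Z)$ that contain a $\Z/12\Z$-stable line, have surjective determinant, and whose commutator subgroup fixes a vector of order $24$, and to check that none of them is the mod-$24$ image of a rational elliptic curve's Galois representation.
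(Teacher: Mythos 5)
Your setup (reduce to a $12$-isogeny, parametrize by $X_0(12)$, pass to the standard twist, and write a model $y^2=x(x^2+b(h)x+d(h))$) matches the paper, but the Diophantine condition you then impose is too weak, and the plan fails at exactly the step you flag as the "goal." You only encode the existence of a point of order $4$ over $\Q^{ab}$ (via Lemma \ref{z2xz4xzn}: $d(h)$ or $(b(h)^2-4d(h))d(h)$ a square). The modular curves cut out by those conditions parametrize curves with a $12$-isogeny and a point of order $4$ over $\Q^{ab}$ --- and such curves genuinely exist, since $\Z/2\Z\times\Z/12\Z$ and $\Z/4\Z\times\Z/12\Z$ both occur as $E(\Q^{ab})_{\textup{tors}}$ (e.g.\ the isogeny class of \lmfdbec{30}{a}{1}, whose members with $C_2=4$ carry $12$-isogenies). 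So your curves $C_i$ necessarily have non-cuspidal rational points, and no amount of descent or quotienting will show $C_i(\Q)$ consists only of cusps. The hypothesis you must exploit is the point of order $8$, i.e.\ that the $2$-primary part is $\Z/2\Z\times\Z/8\Z$ rather than $\Z/2\Z\times\Z/4\Z$.

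The paper does exactly this: it first pins down the order-$4$ point as $Q=(\sqrt{d},\ast)$ with $2Q=(0,0)$, then applies the halving criterion (Lemma \ref{halvingpoint}) a \emph{second} time to $Q$ itself. Writing $\delta=\sqrt{d}$, the existence of $R\in E(\Q^{ab})$ with $2R=Q$ forces $\delta-\alpha$ to be a square in $\Q^{ab}$; its square root generates the splitting field of $X^4-(b+2\delta)X^2+(b+2\delta)\delta$, and Proposition \ref{galgrouptype} turns abelianness of that quartic into the conditions $(b+2\delta)\delta\in(\Q^\times)^2$ or $(b-2\delta)\delta\in(\Q^\times)^2$. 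Those conditions (not the order-$4$ ones) give the rank-zero elliptic curves $y^2=h^3\mp 2h^2-3h$ whose rational points are all cusps. Your fallback via subgroups of $\GL(2,\Z/24\Z)$ is also not a real escape: deciding which such subgroups occur as mod-$24$ images of rational elliptic curves is precisely the kind of modular-curve problem you are trying to avoid. So the proposal as written does not close; it needs the second application of the halving lemma.
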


\begin{proof}
Suppose for the sake of contradiction that $E$ is an elliptic curve over $\Q$ such that $E(\Q^{ab})_{\textup{tors}} \cong \Z/2\Z \times \Z/24\Z$.  Then, $E$ has a $\Q$-rational 12-isogeny, and so by \cite{lozanorobledo1} Table 3 the curve has a $j$-invariant of the form
\[
j(E) = \dfrac{(h^2-3)^3 (h^6-9h^4 + 3h^2 - 3)^3}{h^4 (h^2-9)(h^2-1)^3}
\]
for some $h \in \Q$ with $h \neq 0, \pm 1, \pm3$. Thus, $E$ must be a twist of the curve 
\[ 
E' : y^2 + xy = x^3 - \dfrac{36}{j(E)-1728}x - \dfrac{1}{j(E)-1728}. 
\]
Moving the 2-torsion point to (0,0) yields a model of $E'$ of the form
\[
E' : y^2 = x(x^2 + b(h)x + d(h))
\]
for the rational functions
\[
b(h)=\dfrac{(h^2-3)(h^6-9h^4+3h^2-3)}{h^8-12h^6+30h^4-36h^2+9},
\]
\[d(h)= \dfrac{h^2 (h^2-3)^2 (h^6-9h^4+3h^2 - 3)^2}{(h^4-6h^2-3)^2 (h^8-12h^6 + 30h^4 - 36h^2 + 9)^2}.
\]
For ease of notation going forward, we will write $b = b(h)$, $d= d(h)$, and it should be understood that many of the following variables are functions of $h$. From the proof of Lemma \ref{z2xz4xzn} we see that the point $Q \in E(\Q^{ab})$ of order 4 must satisfy $2Q = (0,0)$. Let $\alpha$ and $\overline{\alpha}$ be roots of $x^2 + bx + d$ (over $\Q[h]$) so that $E': y^2 = x(x-\alpha)(x-\overline{\alpha})$. Then, from Lemma \ref{halvingpoint} we have (without loss of generality) that $Q$ has $x$-coordinate 

$$(\sqrt{0-0})(\sqrt{0-\alpha}) \pm (\sqrt{0-0})(\sqrt{0-\overline{\alpha}}) \pm (\sqrt{0-\alpha})(\sqrt{0-\overline{\alpha}}) + 0 = \pm \sqrt{\alpha \overline{\alpha}} = \pm \sqrt{d}$$

Suppose that the $x$-coordinate of $Q$ is $\sqrt{d}$. Since there is a point of order 8 in $E(\Q^{ab})$, there exists a point $R \in E(\Q^{ab})$ such that $2R = Q$.  Denote $$\alpha = \frac{-b + \sqrt{b^2 - 4d}}{2}$$ and $$\overline{\alpha} = \frac{-b - \sqrt{b^2 - 4d}}{2}$$ so that we have
$$E': y^2 = x(x-\alpha)(x-\overline{\alpha}).$$
For ease of notation we denote $\delta = \sqrt{d}$.  We can apply Lemma \ref{halvingpoint} again to deduce that since such an $R$ exists, we must have $\delta$, $\delta - \alpha$, and $\delta - \overline{\alpha}$ are all squares in $\Q^{ab}$. Notice that through some simplification we have that $(\delta - \alpha)(\delta - \overline{\alpha}) = (b+2\delta)\delta$ and so it suffices to prove that $\delta, (b+2\delta)\delta, \delta-\alpha$ are squares in $\Q^{ab}$.

For any $h \in \Q$ we have $\delta \in \Q$ and so clearly $\sqrt{\delta} \in \Q^{ab}$. Similarly, for all $h \in \Q$, we have that $(b+2\delta)\delta \in \Q$, and so $\sqrt{(b+2\delta)\delta} \in \Q^{ab}$.

To see when $\delta - \alpha$ is square in $\Q^{ab}$ we will find the minimal polynomial of $\delta - \alpha$ over $\Q$, and find when this defines an abelian extension of $\Q$. Notice that $\delta - \alpha = \frac{1}{2}(b - 2\delta - \sqrt{b^2 - 4d})$. Let $\xi = \sqrt{\delta - \alpha} = \sqrt{\dfrac{b-2\delta - \sqrt{b^2 - 4d}}{2}}$. The minimal polynomial of $\xi$ is:

$$f(X) = X^4 - (b+2\delta)X^2 + (b+2\delta)\delta. $$

Now, we apply Proposition \ref{galgrouptype} and see that $f$ defines an abelian extension of $\Q$ if and only if

$$(b+2\delta)\delta \in \left(\Q^{\times}\right)^{2}$$
or 
$$((b+2\delta)^2-4(b+2\delta)\delta)(b+2\delta)\delta \in \left(\Q^{\times}\right)^{2}$$
which, by absorbing squares is equivalent to 
$$(b-2\delta)\delta \in \left(\Q^{\times}\right)^{2}.$$
These yield the curves
\[
C_1: y^2 = h^3 - 2h^2 -3h
\;\;\;\text{ and }\;\;\;
C_2: y^2 = h^3 + 2h^2 -3h
\]
respectively.

Now it remains to classify all rational points on $C_1$ and $C_2$. These are curves with Cremona label \lmfdbec{48}{a}{1} and \lmfdbec{24}{a}{1} respectively and have rank 0 with rational points

$$C_1(\Q) = \{ (0 : 0 : 1), (0 : 1 : 0), (3 : 0 : 1), (-1 : 0 : 1) \}$$
and
$$C_2(\Q) = \{(0 : 0 : 1), (0 : 1 : 0), (-1 : -2 : 1), (3 : -6 : 1), (1 : 0 : 1), (3 : 6 :
1), (-1 : 2 : 1), (-3 : 0 : 1) \}.$$

Note that all of these points correspond to $h=0, \pm 1, \pm 3$, which are zeros of the denominator of $j(E)$ and hence are cusps. Therefore there are no curves over $\Q$ with $E(\Q^{ab})_{\text{tors}} \cong \Z/2\Z \times \Z/24\Z$.

\end{proof}

The following result comes from a paper by Bourdon and Clark \cite{bourdonclark}. The theorem applies broadly to any elliptic curve over $\overline{\Q}$ with complex multiplication, but we will use it to show specifically that the torsion subgroup $\Z/3\Z \times \Z/27\Z$ does not appear over $\Q^{ab}$. 

\begin{theorem}[\cite{bourdonclark}, Theorem 2.7]\label{OKmodstructure}
Let $E/\mathbb{C}$ be an $\mathcal{O}_{K}-CM$ elliptic curve, and let $M \subset E(\mathbb{C})$ be a finite $\mathcal{O}_{K}$-submodule.  Then:
\begin{enumerate}
\item[(a)] We have $M = E[\operatorname{ann}M]$.  Hence:
\item[(b)] $M \cong \mathcal{O}_K / (\operatorname{ann} M)$.
\item[(c)] $\# M = | \operatorname{ann}M |$.
\end{enumerate}
\end{theorem}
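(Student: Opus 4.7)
The plan is to transport the statement through the analytic uniformization and reduce it to a standard duality computation for fractional ideals in a Dedekind domain. First I would write $E(\mathbb{C}) = \mathbb{C}/\Lambda$; since $E$ has CM by the maximal order $\mathcal{O}_K$, after fixing an embedding $\mathcal{O}_K \hookrightarrow \mathbb{C}$ compatible with the CM action, I may identify $\Lambda$ with a (proper) fractional $\mathcal{O}_K$-ideal $\mathfrak{a}$. The torsion subgroup $E(\mathbb{C})_{\textup{tors}}$ is then isomorphic to $K/\mathfrak{a}$ as an $\mathcal{O}_K$-module. Any finite $\mathcal{O}_K$-submodule $M \subset E(\mathbb{C})$ is automatically torsion and so corresponds to a unique $\mathcal{O}_K$-submodule $N$ with $\mathfrak{a} \subseteq N \subseteq K$. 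Because $M \cong N/\mathfrak{a}$ is finite and $\mathfrak{a}$ is finitely generated, so is $N$; hence $N$ is itself a fractional ideal and in particular invertible.

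Next I would compute the annihilator. Directly from these identifications,
\[
\operatorname{ann}(M) \;=\; \{\alpha \in \mathcal{O}_K : \alpha N \subseteq \mathfrak{a}\} \;=\; (\mathfrak{a}:N) \;=\; \mathfrak{a} N^{-1},
\]
where the last equality uses invertibility. Writing $I := \operatorname{ann}(M)$ and running the identification backwards,
\[
E[I] \;\cong\; \{x \in K : Ix \subseteq \mathfrak{a}\}/\mathfrak{a} \;=\; I^{-1}\mathfrak{a}/\mathfrak{a} \;=\; N\mathfrak{a}^{-1}\mathfrak{a}/\mathfrak{a} \;=\; N/\mathfrak{a} \;\cong\; M,
\]
which establishes (a).

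For (b) I would argue locally. For every prime $\mathfrak{p}$ of $\mathcal{O}_K$, the localization $\mathcal{O}_{K,\mathfrak{p}}$ is a DVR, so both $\mathfrak{a}_{\mathfrak{p}}$ and $N_{\mathfrak{p}} = I_{\mathfrak{p}}^{-1}\mathfrak{a}_{\mathfrak{p}}$ are principal, and multiplication by a generator of $\mathfrak{a}_{\mathfrak{p}}$ gives an isomorphism $N_{\mathfrak{p}}/\mathfrak{a}_{\mathfrak{p}} \cong \mathcal{O}_{K,\mathfrak{p}}/I_{\mathfrak{p}}$ of $\mathcal{O}_{K,\mathfrak{p}}$-modules. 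Since $M$ is finite, it is supported at only finitely many primes, and the Chinese Remainder Theorem reassembles these local isomorphisms into a single global one $M \cong \mathcal{O}_K/I$. Part (c) is then immediate, because $\#(\mathcal{O}_K/I)$ is by definition the absolute norm $|I|$.

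The main technical obstacle is the initial bookkeeping: translating $M$ into the fractional-ideal quotient $N/\mathfrak{a}$ and identifying $\operatorname{ann}(M)$ with the conductor $(\mathfrak{a}:N)$ in a way that is visibly $\mathcal{O}_K$-linear. Once this dictionary is in place, (a) reduces to the invertibility identity $I^{-1}\mathfrak{a} = N$, and (b) and (c) fall out of the standard structure theory of modules over a Dedekind domain.
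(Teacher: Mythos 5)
The paper does not prove this statement; it is imported verbatim as Theorem 2.7 of Bourdon--Clark, so there is no internal argument to compare against. Your proof is correct and is the standard one: uniformize $E(\mathbb{C})\cong\mathbb{C}/\mathfrak{a}$ with $\mathfrak{a}$ a fractional $\mathcal{O}_K$-ideal (automatically invertible since $\mathcal{O}_K$ is maximal), identify a finite submodule $M$ with $N/\mathfrak{a}$ for a fractional ideal $N\supseteq\mathfrak{a}$, compute $\operatorname{ann}(M)=(\mathfrak{a}:N)=\mathfrak{a}N^{-1}$ and $E[I]=I^{-1}\mathfrak{a}/\mathfrak{a}$, and then settle (b) and (c) by localization. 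The only slip is cosmetic: in the local step the isomorphism $\mathcal{O}_{K,\mathfrak{p}}/I_{\mathfrak{p}}\xrightarrow{\sim}N_{\mathfrak{p}}/\mathfrak{a}_{\mathfrak{p}}$ is given by multiplication by a generator of $N_{\mathfrak{p}}$ (equivalently, the inverse direction by multiplication by its inverse), not by a generator of $\mathfrak{a}_{\mathfrak{p}}$; since both sides are cyclic modules of the same length over a DVR this does not affect the conclusion. It is also worth saying explicitly that $E[I]$ is finite (as $I$ contains a nonzero integer), which justifies restricting to $x\in K$ when computing $E[I]=(\mathfrak{a}:I)/\mathfrak{a}$.
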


This gives us an understanding of $\mathcal{O}_K$-submodules of $E(\mathbb{C})$ for an elliptic curve with CM by the maximal order.  We use these results to prove the following proposition:

\begin{proposition}
Let $E/\Q$ be an elliptic curve, then $E(\Q^{ab})_{\textup{tors}} \not\cong \Z/3\Z \times \Z/27\Z$.
\end{proposition}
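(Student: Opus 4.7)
Suppose for contradiction that $E(\Q^{ab})_{\textup{tors}} \cong \Z/3\Z \times \Z/27\Z$. The plan is to show that any such $E$ must lie in the $\Q$-isogeny class of a CM elliptic curve, and then use the $\mathcal{O}_K$-module structure supplied by Theorem \ref{OKmodstructure} to see that $\Z/3\Z \times \Z/27\Z$ does not arise as the $3$-primary part of $\mathcal{O}_K/I$ for any ideal $I$, where $K = \Q(\sqrt{-3})$.

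First I would establish that $E$ is CM. By Lemma \ref{isogeny}, $E$ has a $\Q$-rational $9$-isogeny, and Corollary \ref{CPlowerbound} applied to the full $3$-torsion in $E(\Q^{ab})$ supplies two independent rational $3$-isogenies, contributing a total of four distinct $\Q$-rational cyclic $3$-power subgroups (the trivial subgroup, two of order $3$, and one of order $9$). Thus $C_3(E) \geq 4$, and Theorem \ref{8qisogs} forces $C_3(E) = 4$ together with the existence of a representative of the $\Q$-isogeny class of $E$ carrying a cyclic $27$-isogeny. The only $j$-invariant of an elliptic curve over $\Q$ admitting a $27$-isogeny is $-2^{15} \cdot 3 \cdot 5^3$, corresponding to CM by the order of conductor $3$ in $\Z[\zeta_3]$, so the entire $\Q$-isogeny class is CM. In particular, the class contains a curve $\tilde E/\Q$ with $j = 0$ whose endomorphism ring is the maximal order $\mathcal{O}_K = \Z[\zeta_3]$.

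Since $K \subseteq \Q^{ab}$, the endomorphisms of $\tilde E$ are all defined over $\Q^{ab}$, and so $\tilde E(\Q^{ab})_{\textup{tors}}$ is a finite $\mathcal{O}_K$-submodule of $\tilde E(\mathbb{C})$. By Theorem \ref{OKmodstructure}, $\tilde E(\Q^{ab})_{\textup{tors}} \cong \mathcal{O}_K/I$ for some ideal $I$. The prime $3$ is ramified in $\mathcal{O}_K$, with $(3) = \mathfrak{p}^2$ and $\mathfrak{p} = (1-\zeta_3)$, so the $3$-primary part of $I$ is $\mathfrak{p}^n$ for some $n \geq 0$. A direct Smith normal form computation using $\mathfrak{p}^2 = (3)$ yields
\[
\mathcal{O}_K/\mathfrak{p}^n \;\cong\;
\begin{cases}
(\Z/3^{n/2}\Z)^2 & \text{if } n \text{ is even}, \\
\Z/3^{(n-1)/2}\Z \times \Z/3^{(n+1)/2}\Z & \text{if } n \text{ is odd},
\end{cases}
\]
so the invariant factors of the $3$-primary part of $\tilde E(\Q^{ab})_{\textup{tors}}$ differ by at most one power of $3$; in particular $\Z/3\Z \times \Z/27\Z$ never appears on this list.

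If $j(E) = 0$, then $E$ is a quadratic twist of $\tilde E$, and by Lemma \ref{lemma:twists} the two torsion groups agree, yielding an immediate contradiction. If $j(E) = -2^{15} \cdot 3 \cdot 5^3$, one must transfer the contradiction along the $\Q$-rational isogenies of the class: combining the cyclic $27$-isogeny supplied by Theorem \ref{8qisogs} with the ascending $3$-isogeny to $\tilde E$, one locates an $\mathcal{O}_K$-CM representative in the class that itself admits a rational cyclic $27$-isogeny. Lemma \ref{isogimpliespoint} then places a point of order $27$ on that representative over $\Q^{ab}$, forcing $n \geq 5$ in the formula above; but then $\tilde E[9] \subseteq \tilde E(\Q^{ab})$, so $\Q(\tilde E[9])/\Q$ would be abelian, contradicting Theorem \ref{fulltorsionabelian}. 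The main obstacle is exactly this last reduction — pinning down an $\mathcal{O}_K$-CM representative in the class that carries a rational $27$-isogeny — after which Lemma \ref{isogimpliespoint}, Theorem \ref{OKmodstructure}, and Theorem \ref{fulltorsionabelian} combine immediately to close the proof.
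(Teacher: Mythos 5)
Your overall strategy is the same as the paper's: force a $27$-isogeny into the $\Q$-isogeny class of $E$, conclude that the class is the CM class with $j$-invariants $0$ and $-2^{15}\cdot 3\cdot 5^3$, and then rule out $\Z/3\Z\times\Z/27\Z$ as the $3$-primary part of $\mathcal{O}_K/\mathfrak{p}^n$ for $K=\Q(\sqrt{-3})$. The module-theoretic computation is correct and agrees with the paper's. However, your final case split contains a genuine gap, and it is precisely the step you flag as ``the main obstacle'': in the case $j(E)=-2^{15}\cdot3\cdot5^3$ you assert that one can locate an $\mathcal{O}_K$-CM representative of the class carrying a rational cyclic $27$-isogeny. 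No such representative exists. The isogeny class in question is a chain of $3$-isogenies $E_1 - E_2 - E_3 - E_4$ in which the two end curves have $j=-2^{15}\cdot3\cdot5^3$ (CM by the order of conductor $3$) and carry the $27$-isogeny to each other, while the two middle curves have $j=0$ (maximal-order CM) and have largest cyclic rational isogeny of degree only $9$; composing the dual $3$-isogeny with the $27$-isogeny does not produce a cyclic kernel. So the hypothesis needed to apply Lemma \ref{isogimpliespoint} in that branch can never be satisfied, and the branch does not close.

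The repair is to observe that this second case is vacuous, which is what the paper does. You already noted at the outset that full $3$-torsion over $\Q^{ab}$ gives $E$ two \emph{independent} rational $3$-isogenies (via Corollary \ref{CPlowerbound} and the remark following it). The curves with $j=-2^{15}\cdot3\cdot5^3$ sit at the ends of the chain and possess only a single rational subgroup of order $3$ (their four cyclic $3$-power subgroups are nested), so $E$ must be one of the middle curves: $j(E)=0$ and $E$ itself has CM by the maximal order $\mathcal{O}_K$. The $\mathcal{O}_K$-module argument then applies directly to $E$, with no transfer along isogenies or twisting required. (A smaller inaccuracy in your first case: two $j=0$ curves in this class are $3$-isogenous, not quadratic twists of one another, so Lemma \ref{lemma:twists} is not the right tool there either; but that case is harmless since you may simply take $\tilde E=E$.)
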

\begin{proof}
Suppose that $E(\Q^{ab})_{\text{tors}} \cong \Z/3\Z \times \Z/27\Z$.  Then by Lemma \ref{isogeny} the curve $E$ has a $9$-isogeny over $\Q$.  By Corollary \ref{CPlowerbound}, $E$ has an independent $3$-isogeny as well.  Thus we have the following isogeny graph:

$$E' \xleftarrow{\hspace*{1cm} 3 \hspace*{1cm}} E \xrightarrow{\hspace*{2cm} 9 \hspace*{2cm}} E''$$

Taking the dual isogeny also of degree 3 from $E'$ to $E$ and composing it with 9-isogeny from $E$ to $E''$ shows that $E'$ has a 27-isogeny.  The modular curve $X_0(27)$ has genus one, and there is a unique 27-isogeny class of elliptic curves up to isomorphism. Examining the 27-isogeny class shows that $E$ has CM by the maximal order of $K = \Q(\sqrt{-3})$.

Now, notice that $E(\Q^{ab})_{\text{tors}}$ is an $\mathcal{O}_K$-submodule of $E(\mathbb{C})$, since $K \subseteq \Q^{ab}$. Since the prime $p=3$ ramifies in $K$, there is a unique prime ideal $\mathfrak{p}$ of $\mathcal{O}_K$ with $|\mathfrak{p}| = 3$ and we have $(3) = \mathfrak{p}^2$. By Theorem \ref{OKmodstructure} (b) we have that $E[27] \cong \mathcal{O}_K / (3)^3 \cong \mathcal{O}_K / \mathfrak{p}^6$. Suppose $I$ is an ideal of $\mathcal{O}_K/\mathfrak{p}^6$. Then $\mathfrak{p}^6 \subseteq I$ so $I | \mathfrak{p}^6$ and therefore $I = \mathfrak{p}^b$ for some $0 \leq b \leq 6$ by the unique factorization of ideals into prime ideals. Thus, the $\mathcal{O}_K$-submodules of $E[27]$ are all of the form $\mathfrak{p}^b/\mathfrak{p}^6$ for some $0 \leq b \leq 6$. Moreover, the exponent of $\mathcal{O}_K/\mathfrak{p}^b$ is the smallest power of 3 contained in $\mathfrak{p}^b$. Since $(3)^d = \mathfrak{p}^{2d}$, this smallest power is $3^{\ceil*{\frac{b}{2}}}$.  Further, by Theorem \ref{OKmodstructure} (c) we have $\# \mathcal{O}_K / \mathfrak{p}^b = 3^b$, we deduce that
$$\mathcal{O}_K / \mathfrak{p}^b \cong_{\Z} \Z/3^{\floor*{\frac{b}{2}}}\Z \times \Z/3^{\ceil*{\frac{b}{2}}}\Z$$

Notice that since $E(\Q^{ab})[27]$ is an $\mathcal{O}_K$-submodule of $E[27]$, we have that $\floor*{\frac{b}{2}} = 1$, implying $b=2$ or $b=3$, but also $\ceil*{\frac{b}{2}} = 3$, implying $b = 5$ or $b=6$, a contradiction.  Thus no such curve exists.
\end{proof}

\section{Algorithm}
\label{sec:algorithm}

We can combine our results to form an explicit algorithm to compute $E(\Q^{ab})_{\text{tors}}$ for any elliptic curve $E/\Q$. Note that this algorithm only relies on the information about subgroups excluded from appearing as $E(\Q^{ab})_{\text{tors}}$ by sections \ref{sec:bounding} and \ref{sec:class}. See the author's website for the Magma code that implements this algorithm.

The algorithm uses Lemma \ref{isogimpliespoint} and Lemma \ref{lem:Epabelian} repeatedly, as well as Table \ref{table:RDZB}. Moreover, the algorithm works for any elliptic curve $E/\Q$ because we exhaustively deal with all isogeny graphs possible by Theorem \ref{8qisogs}. We denote $T := E(\Q^{ab})_{\text{tors}}$.

\begin{itemize}

\item Compute the isogeny graph of $E$. Let $I$ denote the degrees of the isogenies $E$ has. Let $N$ denote the largest value in $I$. Let $I_2$ and $T_2$ denote the 2-primary part of $I$ and $T$ respectively.

\item Lemma \ref{isogimpliespoint} and \ref{lem:Epabelian} show that:

	\begin{itemize}

	\item If $N = 11, 13, 15, 17, 19, 21, 25, 27, 37, 43, 67, \text{ or } 163$, then $T \cong \Z/N\Z$.

	\item If $N = 10, 14, 16, \text{ or } 18$ then $T \cong \Z/2\Z \times \Z/N\Z$
	
	\item If $I = [1,5,5]$ then $ T \cong \Z/5\Z \times \Z/5\Z$

	\item If $I = [1,3,3,9]$ then $T \cong \Z/3\Z \times \Z/9\Z$.

	\item If $I = [1,3,3]$ then $T \cong \Z/3\Z \times \Z/3\Z$. Note we cannot have $T \cong \Z/6\Z \times \Z/6\Z$ because Lemma \ref{full6torsion} would imply a 2-isogeny.

	\item If $I = [1,2,3,3,6,6]$ then $T \cong \Z/6\Z \times \Z/6\Z$.
	
	\end{itemize}

\item Table \ref{table:RDZB} shows that:

	\begin{itemize}

	\item If $I = [1,2,2,2,4,4,4,4]$ then $T \cong \Z/8\Z \times \Z/8\Z$.

	\item If $I = [1,2,2,2,4,4,8,8]$ then $T \cong \Z/4\Z \times \Z/16\Z$.

	\item If $I = [1,2,4,4,8,8,8,8]$ or $[1,2,2,2,4,4]$ then $T \cong \Z/4\Z \times \Z/8\Z$.

	\item If $I = [1,2,4,4,8,8,16,16]$ then $T \cong \Z/2\Z \times \Z/16\Z$.

	\item If $I = [1,2,4,4,8,8]$ then $T \cong \Z/2\Z \times \Z/8\Z$.
	
	\item If $I = [1,2,2,2,3,6,6,6]$ then $T_2 \cong \Z/4\Z \times \Z/4\Z$ and $3 \in I$ shows that $T \cong \Z/4\Z \times \Z/12\Z$.
	
	\item If $I = [1,2,2,2]$ then $T_2 \cong \Z/4\Z \times \Z/4\Z$.

	\end{itemize}
	
\item The remaining cases require extra steps beyond simply computing $I$, as $I$ does not uniquely determine $T$. 

	\begin{itemize}
	
	\item If $N = 1, 3, 5, 7, \text{ or } 9$, then compute $E[2]$ to check whether $\Q(E[2])$ is abelian or not. If not, then $T \cong \Z/N\Z$. If so, then $T \cong \Z/2\Z \times \Z2N\Z$.
	
	\item If $I_2 = [1,2,4,4]$ then compute $E[8]$ and $E[4]$ to determine whether $E$ has a point of order 8 over $\Q^{ab}$ and whether $\Q(E[4])$ is abelian. This distinguishes between the following three cases:
	
		\begin{itemize}

		\item $T_2 \cong \Z/2\Z \times \Z/4\Z$. If $3 \in I$, then $T \cong \Z/2\Z \times \Z/12\Z$. If $3 \not\in I$, then $T \cong \Z/2\Z \times \Z/4\Z$.
		
		\item $T_2 \cong \Z/2\Z \times \Z/8\Z$. This implies $T \cong \Z/2\Z \times \Z/8\Z$.
		
		\item $T_2 \cong \Z/4\Z \times \Z/4\Z$. If $3 \in I$, then $T \cong \Z/4\Z \times \Z/12\Z$. If $3 \not\in I$, then $T \cong \Z/4\Z \times \Z/4\Z$.
	
		\end{itemize}
		
	\item If $I_2 = [1,2]$ then we compute $E[4]$ to determine whether $E$ has a point of order 4 defined over $\Q^{ab}$. This distinguishes between the following two cases:
	
		\begin{itemize}
		
		\item $T_2 \cong \Z/2\Z \times \Z/2\Z$. If $3 \in I$, then $T \cong \Z/2\Z \times \Z/6\Z$. If $5 \in I$, then $T \cong \Z/2\Z \times \Z/10\Z$. If $I = I_2 = 1,2$ then $T \cong \Z/2\Z \times \Z/2\Z$.
		
		\item $T_2 \cong \Z/2\Z \times \Z/4\Z$. If $3 \in I$, then $T \cong \Z/2\Z \times \Z/12\Z$. If $3 \not\in I$, then $T \cong \Z/2\Z \times \Z/4\Z$.
		
		\end{itemize}
	
	\end{itemize}

\end{itemize}

\section{Examples}\label{sec:examples}

We first examine all examples of curves with an $n$-isogeny where $X_0(n)$ has finitely many non-cuspidal points over $\Q$ in Table \ref{table:finite}. We refer to Table 4 of \cite{lozanorobledo1} for the $j$-invariants. We give the torsion subgroup over $\Q^{ab}$, the $j$-invariant, the Cremona labels of the elliptic curves, and the Galois group of the field of definition of the abelian torsion. We then find examples for all the other torsion subgroups appearing in Theorem \ref{classification} in Table \ref{table:rest}, computing the torsion subgroup over $\Q^{ab}$ using the method described in Section \ref{sec:algorithm}.

\vfill

\renewcommand{\arraystretch}{1.1}
\begin{table}[h]
\centering
\caption{Torsion from $n$-isogenies with $X_0(n)$ genus > 0}\label{table:finite}
\scalebox{0.75}{
\begin{tabular}{|c||c|c|c|}\hline

$E(\Q^{ab})_{\text{tors}}$ & $j(E)$ & Cremona Label & $\Gal(\Q(E(\Q^{ab})_{\text{tors}})/\Q)$ \\ \hline

\multirow{6}{*}{$\Z/11\Z$} & \multirow{2}{*}{$-11 \cdot 131^3$} & \lmfdbec{121}{a}{1} & $\Z/10\Z$ \\ \cline{3-4}
& & \lmfdbec{121}{c}{2} & $\Z/5\Z$ \\ \cline{2-4}
& \multirow{2}{*}{$-2^{15}$} & \lmfdbec{121}{b}{1} & $\Z/5\Z$\\ \cline{3-4}
& & \lmfdbec{121}{b}{2} & $\Z/10\Z$\\ \cline{2-4}
& \multirow{2}{*}{$-11^2$} & \lmfdbec{121}{c}{1} & $\Z/10\Z$\\ \cline{3-4}
& & \lmfdbec{121}{a}{2} & $\Z/5\Z$\\ \hline

\multirow{8}{*}{$\Z/15\Z$} & \multirow{2}{*}{$-5^2 / 2$} & \lmfdbec{50}{a}{1} & $\Z/4\Z$ \\ \cline{3-4}
& & \lmfdbec{50}{b}{3} & $\Z/4\Z$ \\ \cline{2-4}
& \multirow{2}{*}{$-5^2 \cdot 241^3 / 2^3$} & \lmfdbec{50}{a}{2} & $\Z/2\Z \times \Z/4\Z$\\ \cline{3-4}
& & \lmfdbec{50}{b}{4} & $\Z/2\Z \times \Z/4\Z$\\ \cline{2-4}
& \multirow{2}{*}{$-5 \cdot 29^3 / 2^5$} & \lmfdbec{50}{a}{3} & $\Z/2\Z$\\ \cline{3-4}
& & \lmfdbec{50}{b}{1} & $\Z/2\Z$\\ \cline{2-4}
& \multirow{2}{*}{$5 \cdot 211^3 / 2^{15}$} & \lmfdbec{50}{a}{4} & $\Z/2\Z \times \Z/2\Z$\\ \cline{3-4}
& & \lmfdbec{50}{b}{2} & $\Z/2\Z$\\ \hline

\multirow{2}{*}{$\Z/17\Z$} & $-17^2 \cdot 101^3 / 2$ & \lmfdbec{14450}{p}{1} & $\Z/16\Z$ \\ \cline{3-4}
& $-17 \cdot 373^3 / 2^{17}$ & \lmfdbec{14450}{p}{2} & $\Z/8\Z$\\ \hline

\multirow{2}{*}{$\Z/19\Z$} & \multirow{2}{*}{$-2^{15} \cdot 3^3$} & \lmfdbec{361}{a}{1} & $\Z/9\Z$ \\ \cline{3-4}
& & \lmfdbec{361}{a}{2} & $\Z/18\Z$ \\ \hline

\multirow{8}{*}{$\Z/21\Z$} & \multirow{2}{*}{$-3^2 \cdot 5^6 / 2^3$} & \lmfdbec{162}{b}{1} & $\Z/3\Z$ \\ \cline{3-4}
& & \lmfdbec{162}{c}{2} & $\Z/6\Z$ \\ \cline{2-4}
& \multirow{2}{*}{$3^3 \cdot 5^3 / 2$} & \lmfdbec{162}{b}{2} & $\Z/6\Z$\\ \cline{3-4}
& & \lmfdbec{162}{c}{1} & $\Z/6\Z$\\ \cline{2-4}
& \multirow{2}{*}{$-3^2 \cdot 5^3 \cdot 101^3/2^{21}$} & \lmfdbec{162}{b}{3} & $\Z/6\Z$\\ \cline{3-4}
& & \lmfdbec{162}{c}{4} & $\Z/2\Z \times \Z/6\Z$\\ \cline{2-4}
& \multirow{2}{*}{$-3^3 \cdot 5^3 \cdot 383^3 / 2^7$} & \lmfdbec{162}{b}{4} & $\Z/2\Z \times \Z/6\Z$\\ \cline{3-4}
& & \lmfdbec{162}{c}{3} & $\Z/6\Z$\\ \hline

\multirow{2}{*}{$\Z/27\Z$} & \multirow{2}{*}{$-2^{15} \cdot 3 \cdot 5^3$} & \lmfdbec{27}{a}{2} & $\Z/18\Z$ \\ \cline{3-4}
& & \lmfdbec{27}{a}{4} & $\Z/9\Z$ \\ \hline

\multirow{2}{*}{$\Z/37\Z$} & $-7 \cdot 11^3$ & \lmfdbec{1225}{h}{1} & $\Z/12\Z$ \\ \cline{3-4}
& $-7 \cdot 137^3 \cdot 2083^3$ & \lmfdbec{1225}{h}{2} & $\Z/36\Z$ \\ \hline

\multirow{2}{*}{$\Z/43\Z$} & \multirow{2}{*}{$-2^{18} \cdot 3^3 \cdot 5^3$} & \lmfdbec{1849}{a}{1} & $\Z/21\Z$ \\ \cline{3-4}
& & \lmfdbec{1849}{a}{2} & $\Z/42\Z$ \\ \hline

\multirow{2}{*}{$\Z/67\Z$} & \multirow{2}{*}{$-2^{15} \cdot 3^3 \cdot 5^3 \cdot 11^3$} & \lmfdbec{4489}{a}{1} & $\Z/33\Z$ \\ \cline{3-4}
& & \lmfdbec{4489}{a}{2} & $\Z/66\Z$ \\ \hline
\
\multirow{2}{*}{$\Z/163\Z$} & \multirow{2}{*}{$-2^{18} \cdot 3^3 \cdot 5^3 \cdot 23^3 \cdot 29^3$} & \lmfdbec{26569}{a}{1} & $\Z/81\Z$ \\ \cline{3-4}
& & \lmfdbec{26569}{a}{2} & $\Z/162\Z$ \\ \hline
\multirow{4}{*}{$\Z/2\Z \times \Z/14\Z$} & \multirow{2}{*}{$-3^3 \cdot 5^3$} & \lmfdbec{49}{a}{1} & $\Z/6\Z$ \\ \cline{3-4}
& & \lmfdbec{49}{a}{3} & $\Z/6\Z$ \\ \cline{2-4}
& \multirow{2}{*}{$-3^3 \cdot 5^3 \cdot 17^3$} & \lmfdbec{49}{a}{2} & $\Z/2\Z \times \Z/6\Z$\\ \cline{3-4}
& & \lmfdbec{49}{a}{4} & $\Z/6\Z$ \\ \hline

\multirow{2}{*}{$\Z/3\Z \times \Z/9\Z$} & \multirow{2}{*}{$0$} & \lmfdbec{27}{a}{1} & $\Z/6\Z$ \\ \cline{3-4}
& & \lmfdbec{27}{a}{3} & $\Z/6\Z$ \\ \hline

\end{tabular}
}

\end{table}

\pagebreak

\renewcommand{\arraystretch}{1.1}
\begin{table}[h]
\centering
\caption{Examples of remaining torsion subgroups}\label{table:rest}
\centering
\begin{tabular}{|c||c|c|c|}\hline
$E(\Q^{ab})_{\text{tors}}$ & $j(E)$ & Cremona Label & $\Gal(\Q(E(\Q^{ab})_{\text{tors}})/\Q)$ \\ \hline

$\{ \mathcal{O} \}$ & $2^{12} \cdot 3^3 / 37$ & \lmfdbec{37}{a}{1} & $\{1\}$ \\ \hline
$\Z/3\Z$ & $2^{13} / 11$ & \lmfdbec{44}{a}{1} & $\{1\}$ \\ \hline
$\Z/5\Z$ & $- 1 / 2^5 \cdot 19$ & \lmfdbec{38}{b}{1} & $\{1\}$ \\ \hline
$\Z/7\Z$ & $3^3 \cdot 4^3 / 2^7 \cdot 13$ & \lmfdbec{26}{b}{1} & $\{1\}$ \\ \hline
$\Z/9\Z$ & $-3 \cdot 73^3 / 2^9$ & \lmfdbec{54}{b}{3} & $\{1\}$ \\ \hline
$\Z/13\Z$ & $-2^{12} \cdot 7 / 3$ & \lmfdbec{147}{b}{1} & $\Z/3\Z$ \\ \hline
$\Z/25\Z$ & $-2^{12} / 11$ & \lmfdbec{11}{a}{3} & $\Z/5\Z$ \\ \hline
$\Z/2\Z \times \Z/2\Z$ & $ - 5^6 / 3^2 \cdot 23$ & \lmfdbec{69}{a}{1} & $\Z/2\Z$ \\ \hline
$\Z/2\Z \times \Z/4\Z$ & $11^6 / 3 \cdot 5 \cdot 7$ & \lmfdbec{315}{b}{1} & $\Z/2\Z \times \Z/2\Z$ \\ \hline
$\Z/2\Z \times \Z/6\Z$ & $2^8 \cdot 7$ & \lmfdbec{196}{a}{1} & $\Z/6\Z$ \\ \hline
$\Z/2\Z \times \Z/8\Z$ & $12721^3 / 3 \cdot 5 \cdot 7 \cdot 11^2$ & \lmfdbec{3465}{e}{1} & $\left(\Z/2\Z\right)^3$ \\ \hline
$\Z/2\Z \times \Z/10\Z$ & $2161^3 / 2^{10} \cdot 3^5 \cdot 11$  & \lmfdbec{66}{c}{1} & $\Z/2\Z$ \\ \hline
$\Z/2\Z \times \Z/12\Z$ & $71^3 / 2^4 \cdot 3^3 \cdot 5$ & \lmfdbec{30}{a}{1} & $\Z/2\Z \times \Z/2\Z$ \\ \hline
$\Z/2\Z \times \Z/16\Z$ & $103681^3 / 3^4 \cdot 5$ & \lmfdbec{15}{a}{5} & $\Z/2\Z \times \Z/2\Z \times \Z/4$  \\ \hline
$\Z/2\Z \times \Z/18\Z$ & $-5^3 \cdot 1637^3/ 2^{18} \cdot 7$ & \lmfdbec{14}{a}{3} & $\Z/2\Z \times \Z/6\Z$ \\ \hline
$\Z/3\Z \times \Z/3\Z$ & $-2^{18} \cdot 7^{3} / 19^3$ & \lmfdbec{19}{a}{1} & $\Z/2\Z$ \\ \hline
$\Z/4\Z \times \Z/4\Z$ & $19^6 / 3^2 \cdot 5^2 \cdot 7^2$ & \lmfdbec{315}{b}{2} & $\left(\Z/2\Z\right)^{4}$ \\ \hline
$\Z/4\Z \times \Z/8\Z$ &$37^3 \cdot 109^3 / 2^4 \cdot 3^4 \cdot7^2$ & \lmfdbec{126}{b}{2} & $\left(\Z/2\Z\right)^{4}$ \\ \hline
$\Z/4\Z \times \Z/12\Z$ & $7^3 \cdot 127^3 / 2^2 \cdot 3^6 \cdot 5^2$ & \lmfdbec{30}{a}{2} & $\left(\Z/2\Z\right)^{4}$ \\ \hline
$\Z/4\Z \times \Z/16\Z$ & $241^3 / 3^2 \cdot 5^2$ & \lmfdbec{735}{e}{2} & $\left(\Z/2\Z\right)^{3}\times \Z/4\Z$ \\ \hline
$\Z/5\Z \times \Z/5\Z$ & $-2^{12} \cdot 31^3 / 11^5$ & \lmfdbec{11}{a}{1} & $\Z/4\Z$ \\ \hline
$\Z/6\Z \times \Z/6\Z$ & $5^3 \cdot 43^4 / 2^6 \cdot 7^3$ & \lmfdbec{14}{a}{1} & $\Z/2\Z \times \Z/2\Z$ \\ \hline
$\Z/8\Z \times \Z/8\Z$ & $13^3 \cdot 17^3 / 3^4 \cdot 5^4$ & \lmfdbec{735}{e}{4} & $\left(\Z/2\Z\right)^{5}$ \\ \hline

\end{tabular}

\end{table}

\clearpage

\end{document}